\newtheorem{dummy}{dummy}[section]
\newtheorem{theorem}[dummy]{Theorem}
\newtheorem{corollary}[dummy]{Corollary}
\newtheorem{lemma}[dummy]{Lemma}
\newtheorem{proposition}[dummy]{Proposition}
\newtheorem{question}[dummy]{Question}
\newtheorem{conjecture}[dummy]{Conjecture}
\theoremstyle{remark}
\newtheorem{remark}[dummy]{Remark}
\newtheorem{example}[dummy]{Example}
\def\A{\mathbb A}
\def\C{\mathbb C}
\def\L{\mathbb L}
\def\P{\mathbb P}
\def\Z{\mathbb Z}
\def\CC{\mathcal C}
\def\EE{\mathcal E}
\def\FF{\mathcal F}
\def\LL{\mathcal L}
\def\NN{\mathcal N}
\def\OO{\mathcal O}
\newcommand{\BD}{\mathbb{D}}
\newcommand{\KVar}{K_0(\Var/\kk)}
\newcommand{\KDG}{K_0(\dgcat/\kk)}
\newcommand{\cC}{\CC}
\newcommand{\cE}{\EE}
\newcommand{\cF}{\FF}
\newcommand{\cL}{\LL}
\newcommand{\cN}{\NN}
\newcommand{\cO}{\OO}
\newcommand{\cQ}{{\mathcal Q}}
\newcommand{\cU}{{\mathcal U}}
\newcommand{\kk}{\Bbbk}
\DeclareMathOperator{\id}{\mathrm{id}}
\DeclareMathOperator{\Spec}{\mathrm{Spec}}
\DeclareMathOperator{\Ker}{\mathrm{Ker}}
\DeclareMathOperator{\Coker}{\mathrm{Coker}}
\DeclareMathOperator{\Gr}{\mathrm{Gr}}
\DeclareMathOperator{\Bl}{\mathrm{Bl}}
\DeclareMathOperator{\Sing}{\mathrm{Sing}}
\DeclareMathOperator{\CH}{\mathrm{CH}}
\def\={\;=\;}
\def\bal{\begin{aligned}}
\def\eal{\end{aligned}}
\def\be{\begin{equation}\label}
\def\ee{\end{equation}}
\def\wt{\widetilde}
\def\ol{\overline}
\def\mod#1{\; \left({\rm mod} \; #1\right)}
\DeclareMathOperator{\Var}{\mathrm{Var\!}}
\DeclareMathOperator{\dgcat}{\mathrm{DG-cat\!}}
\DeclareMathOperator{\Br}{Br}
\def\Hom {\operatorname{Hom}\nolimits}
\title[D- and L-equivalence]{Grothendieck ring of varieties, D-{} and L-equivalence,\\ and families of quadrics}
\author{Alexander Kuznetsov}
\thanks{A.K.\ was partially supported by the Russian Academic Excellence Project “5-100”, by RFBR grants 15-01-02164 and 15-51-50045, and by the Simons foundation.}
\address{{\sloppy
\parbox{0.95\textwidth}{
{\bf A.K.:} Algebraic Geometry Section, Steklov Mathematical Institute of Russian Academy of Sciences,\\
8 Gubkin str., Moscow 119991 Russia
\\[5pt]
The Poncelet Laboratory, Independent University of Moscow
\hfill\\[5pt]
Laboratory of Algebraic Geometry, National Research University Higher School of Economics
}\bigskip}}
\email{akuznet@mi.ras.ru}
\author{Evgeny Shinder}
\address{{\sloppy
\parbox{0.95\textwidth}{
{\bf E.S.:} School of Mathematics and Statistics,\\
University of Sheffield, S3 7RH, UK.
}\bigskip}}
\email{e.shinder@sheffield.ac.uk}
\date{} 
\begin{document}

\maketitle

\begin{abstract}
We discuss a conjecture saying that derived equivalence of smooth projective simply connected varieties implies that 
the difference of their classes in the Grothendieck ring of varieties is annihilated by a power of the affine line class.
We support the conjecture with a number of known examples, and one new example. 
We consider a smooth complete intersection $X$ of three quadrics in $\P^5$ and the corresponding double cover $Y \to \P^2$
branched over a sextic curve. We show that as soon as the natural Brauer class on~$Y$ vanishes, so that $X$ and $Y$ are derived equivalent,
the difference $[X] - [Y]$ is annihilated by the affine line class.
\end{abstract}
         
\section{Introduction}

Let $\kk$ be a field and $\KVar$
be the Grothendieck ring of varieties over $\kk$, that is the ring generated by isomorphism classes $[X]$ of algebraic varieties $X$ over $\kk$ with relations
\begin{equation*}
[X] = [Z] + [U] 
\end{equation*}
for every closed subvariety $Z \subset X$ with open complement $U \subset X$.
The product structure is induced by products of varieties, and the unit is $[\Spec(\kk)]$, the class of a point.

The ring $\KVar$ is a very basic object of algebraic geometry introduced by A.\,Grothendieck in his correspondence with J.\,P.\,Serre.
The class of an algebraic variety $X$ in that ring is an important invariant that has a clear ``motivic nature''.
In this aspect, it is a close relative of the class of the derived category~$\BD(X)$ of $X$ in the Bondal--Larsen--Lunts ring $\KDG$ (see~\cite{BLL}),
or of the motive~$M(X)$ of $X$ in the category of Chow motives (or in the Voevodsky category of geometric motives).
It is an interesting and important question to understand relations between these invariants.

\subsection{D-equivalence}

We write $\BD(X)$ for the bounded derived category of coherent sheaves on a smooth projective variety $X$.
We say that smooth projective varieties are {\sf D-equivalent}, if $\BD(X) \cong \BD(Y)$.
The relation between derived categories and motives was discussed by Orlov in~\cite{Orl05}, where besides other things 
there was suggested a conjecture stating D-equivalence of two smooth projective varieties implies an equality of their Chow motives with rational coefficients.
The question that we suggest to discuss is a relation between the other two invariants.

\begin{question}
Assume $X$ and $Y$ are \textup{D}-equivalent smooth projective varieties.
What is the relation between the classes $[X]$ and $[Y]$ in the Grothendieck ring $\KVar$?
\end{question}

There are quite many examples of D-equivalent varieties:
among them there are birational examples, K3 surfaces, Abelian varieties, some non-birational Calabi--Yau varieties.
A general source of derived equivalences is provided by homological projective duality~\cite{Kuz07,Kuz14} for varieties with rectangular Lefschetz decompositions.

For some of the D-equivalences, the classes in the Grothendieck ring of the involved varieties were related (we list some examples below).
In some cases, the relation is rather trivial, but in other cases, it is a bit unexpected.

To explain the relation denote the class of an affine line in the Grothendieck ring by
\begin{equation*}
\L = [\A^1] = [\P^1] - [\Spec(\kk)].
\end{equation*}

Let us start with the most boring situation.
If $X$ or $Y$ has ample canonical or anticanonical class, then a D-equivalence between them implies that $X$ and $Y$ are isomorphic so that $[X] = [Y]$.

It is a bit more interesting to weaken the assumption of ampleness of the canonical class of $X$ to the assumption that $X$ is of general type.
Then if $\BD(X) \cong \BD(Y)$ is a D-equivalence,
the canonical rings of $X$ and $Y$ are isomorphic, hence $X$ and $Y$ are birational, and moreover {\sf K-equivalent} \cite[Theorem~1.4(2)]{Kawamata}
(i.e., there is a smooth projective variety $Z$ with birational morphisms $Z \to X$ and $Z \to Y$ such that the relative canonical classes are equal, i.e., $K_{Z/X} = K_{Z/Y}$).
We discuss K-equivalent varieties in the following example.

\begin{example}\label{example:k-equivalence}
Assume $X$ and $Y$ are K-equivalent.
A conjecture of Kawamata~\cite[Conjecture~1.2]{Kawamata} says that $X$ and $Y$ are then D-equivalent.
On the other hand, motivic integration~\cite{Kontsevich} proves that the classes of $X$ and $Y$ are equal in the completion of the localization~$\KVar[\L^{-1}]$ of the Grothendieck ring
with respect to the dimension filtration of $\KVar[\L^{-1}]$.
The explicit statement one gets unraveling the completion and localization is
that there are two sequences of integers $r_i$, $d_i$ such that the difference~$r_i - d_i$ tends to infinity while the class $([X] - [Y])\L^{r_i}$ 
can be expressed as a linear combination of varieties of dimension at most $d_i$.
If one could avoid the completion at this point, that would just mean that $[X]\L^r = [Y]\L^r$ in the Grothendieck ring for some integer $r$.

Note that in cases when we can actually {\it prove}\/ D-equivalence of K-equivalent varieties $X$ and $Y$, we can even show that $[X] = [Y]$.
For instance, consider the situation of a simple flop (see~\cite[Theorem~3.6]{BO95} for a proof of D-equivalence in that case).
In other words, assume that $X$ and $Y$ are smooth varieties containing ruled subvarieties $\P_S(\cE) \hookrightarrow X$ and $\P_S(\cF) \hookrightarrow Y$ with the same base~$S$ 
and locally free sheaves $\cE$ and $\cF$ on $S$ of the same rank such that
$X \setminus \P_S(\cE) \cong Y \setminus \P_S(\cF)$. 
Of course we have $[X \setminus \P_S(\cE)] = [Y \setminus \P_S(\cF)]$. 
Moreover, $[\P_S(\cE)] = [S][\P^d] = [\P_S(\cF)]$ (where $d + 1$ is the rank of~$\cE$ and~$\cF$).
Summing up these equalities we deduce $[X] = [Y]$.
\end{example}

\begin{example}
In~\cite{Uehara} an example of birational, D-equivalent, but not K-equivalent varieties is constructed.
The example is provided by an appropriate pair of complex rational elliptic surfaces $X$ and~$Y$.
Then $[X] = [Y]$
because D-equivalence of complex surfaces implies equality of their Picard numbers,
and for a complex rational smooth projective surface $X$ with Picard number $\rho$ one has $[X] = 1 + \rho \L + \L^2$.
\end{example}

So far, one could imagine that D-equivalent varieties have the \emph{same} class in the Grothendieck ring.
This, however, is far too naive, as Examples~\ref{example:pfaffian-grassmannian} and~\ref{example:g2} show.

\begin{example}\label{example:pfaffian-grassmannian}
Let $X$ and $Y$ be Calabi--Yau threefolds from the Pfaffian--Grassmannian correspondence, see~\cite{BC}.
Then $\BD(X) \cong \BD(Y)$ by~\cite{BC} and~\cite{Kuz06b}.
On the other hand, Borisov showed in~\cite{Bor} that $[X] - [Y]$ is annihilated by a power of $\L$,
and Martin \cite{M} improved his result by showing that 
\begin{equation*}
([X] - [Y])\L^6 = 0.
\end{equation*}
It is not known whether $6$ is the minimal power of $\L$ annihilating $[X] -  [Y]$, but it 
is not hard to show that this power is positive, 
i.e., that $[X] \ne [Y]$ in the Grothendieck ring (see Proposition~\ref{proposition:trivial-l-equivalence}).
\end{example}

\begin{example}\label{example:g2}
Let $X$ and $Y$ be Calabi--Yau threefolds of Ito--Miura--Okawa--Ueda, \cite{IMOU}.
By \cite{Kuz16} we have
$\BD(X) \cong \BD(Y)$. 
On the other hand, the main result of~\cite{IMOU} is the relation 
\begin{equation*}
([X] - [Y])\L = 0.
\end{equation*}
Again, it is easy to show that $[X] \ne [Y]$, so the power of $\L$ annihilating the difference in this case is definitely minimal.
\end{example}

\subsection{L-equivalence}

Examples \ref{example:pfaffian-grassmannian} and~\ref{example:g2} suggest a less naive conjecture.

We say that varieties $X$ and $Y$ are {\sf L-equivalent}, if 
\begin{equation*}
([X] - [Y]) \L^r = 0
\end{equation*}
for some integer $r \ge 0$,
in other words, 
if the classes of $X$ and $Y$ in the localized Grothendieck ring $\KVar[\L^{-1}]$ are equal, i.e., if
\begin{equation*}
[X] - [Y] \in \Ker(\KVar \to \KVar[\L^{-1}]).
\end{equation*}
If $[X] = [Y]$ in $\KVar$, we say that $X$ and $Y$ are {\sf trivially L-equivalent}.

It is easy to see that L-equivalence and trivial L-equivalence are indeed equivalence relations on the set of isomorphism classes of smooth
projective algebraic varieties over $\kk$.

The following conjecture is quite challenging. 
Its far reaching consequences are discussed below.

\begin{conjecture}\label{conjecture:d-l}
If $X$ and $Y$ are smooth projective simply connected varieties such that $\BD(X) \cong \BD(Y)$, then there is a nonnegative integer $r \ge 0$ such that $([X] - [Y])\L^r = 0$.
In other words, \textup{D}-equivalence of varieties implies their \textup{L}-equivalence.
\end{conjecture}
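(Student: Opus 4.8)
The plan is not to prove Conjecture~\ref{conjecture:d-l} uniformly --- there is no known way to turn a Fourier--Mukai kernel into a class, or even a correspondence, in $\KVar$, and one should not expect $[X]=[Y]$ in general (Proposition~\ref{proposition:trivial-l-equivalence}) --- but to split the problem according to the known ``sources'' of D-equivalence and to treat each by an explicit cut-and-paste argument. First I would dispose of the case where $X$ or $Y$ is of general type: a D-equivalence then forces K-equivalence, reducing the problem to the (still open, but more tractable) question of whether K-equivalence implies L-equivalence --- a statement already available in the completed localization $\KVar[\L^{-1}]$ via motivic integration, and proved outright in $\KVar$ for chains of simple flops by the projective-bundle and blow-up relations (Example~\ref{example:k-equivalence}). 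Among simply connected varieties this leaves the Calabi--Yau-type examples --- K3 and hyperk\"ahler varieties, strict Calabi--Yau threefolds, and the like --- where $K_X$ is trivial, all canonical-ring bookkeeping disappears, and a genuinely new input is required.

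Second, for the D-equivalences coming from homological projective duality with a rectangular Lefschetz decomposition --- which covers the Pfaffian--Grassmannian pair (Example~\ref{example:pfaffian-grassmannian}), the Ito--Miura--Okawa--Ueda pair (Example~\ref{example:g2}), and the complete-intersection-of-quadrics example announced in the abstract --- I would try to read off a \emph{geometric} correspondence from the duality itself: an auxiliary variety, realized as an incidence locus inside a product of projective or Grassmannian bundles, that maps onto both $X$ and $Y$ with fibers that are projective or Grassmannian bundles away from a common discriminant locus. Applying the projective-bundle relation $[\P_S(\cE)]=[S]\cdot[\P^{d}]$ and the blow-up relation for a smooth centre repeatedly, and observing that the contributions of the non-generic strata become divisible by high powers of $\L$, one arrives at a relation $([X]-[Y])\L^r=0$ with an explicit $r$. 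Borisov's and Martin's computations are precisely this mechanism in the Pfaffian--Grassmannian case; the work would be to carry it out for rectangular HPD in general and to keep the power $r$ under control.

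The main obstacle --- and the reason the conjecture is deep rather than routine --- is the class of D-equivalences with no visible incidence variety: twisted Fourier--Mukai partners of K3 surfaces, and Calabi--Yau or hyperk\"ahler varieties that appear only as moduli spaces $M_X(v)$ of sheaves with no birational or bundle relation to $X$. For these I would try to compute the motivic class of such a moduli space relative to $[X]$ itself: stratify $M_X(v)$ by the type of the Harder--Narasimhan and Jordan--H\"older filtrations of its points, express each stratum through Quot schemes and symmetric products of $X$, and hope that every ``error locus'' --- where the parametrized sheaves degenerate --- carries a class that becomes divisible by a power of $\L$ after inverting $\L$, so that $[M_X(v)]\equiv[X]$ in $\KVar[\L^{-1}]$. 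Proving that the motivic class of a moduli space of sheaves on a surface is, up to L-equivalence, an invariant of the derived category is what I expect to be the genuine crux; failing a direct argument, one might instead spread the pair out over a base and try to deduce the relation from a single well-understood specialization by a rigidity argument.
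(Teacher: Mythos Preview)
The statement you are asked to prove is a \emph{conjecture} in the paper, not a theorem: the paper does not prove it and does not claim to. There is therefore no ``paper's own proof'' to compare your proposal against. What the paper contributes toward the conjecture is a single new piece of evidence, Theorem~\ref{theorem:main}: for a smooth intersection $X$ of three quadrics in $\P^5$ and the associated double cover $Y\to\P^2$, vanishing of the Brauer class forces $\BD(X)\cong\BD(Y)$ and $([X]-[Y])\L=0$. That computation is exactly an instance of the mechanism you sketch in your second paragraph --- an auxiliary incidence variety (here the total family of quadrics and its hyperbolic reductions) mapping to both sides with projective-space fibers, so that repeated use of the projective-bundle and blow-up relations yields the desired relation in $\KVar$.

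Your proposal is a reasonable research programme, and you are candid that it is one: you explicitly flag the general-type case as reducing to the open problem of whether K-equivalence implies L-equivalence in $\KVar$ (not merely in the completion), and you identify the moduli-of-sheaves D-equivalences with no visible correspondence as the genuine obstruction. Those are exactly the gaps. But a programme is not a proof, and nothing in your outline closes either gap: for K-equivalence you only have the completed statement from motivic integration plus the simple-flop case, and for Fourier--Mukai partners of K3 surfaces arising as moduli spaces you offer a stratification heuristic with no mechanism for controlling the error terms. Since the paper itself leaves the conjecture open and only supplies one further example, the honest summary is that neither you nor the paper proves the statement; your second paragraph correctly anticipates the \emph{method} the paper uses for its example, while your first and third paragraphs describe open problems rather than arguments.
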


The assumption of simple connectedness in the conjecture is necessary.
As we were informed by~A.~Efimov and independently by~K.~Ueda, one can construct examples of D-equivalent abelian varieties which are not L-equivalent
(the invariant distinguishing between them is the integral Hodge structure on first cohomology group).

Of course, it is possible that the relation between the classes of $X$ and $Y$ is more complicated. 
For instance, it well may be that one has to consider the same completion of the localized Grothendieck ring as in motivic integration, and just say that the classes of $X$ and $Y$ in that completion are the same.

Conjecture \ref{conjecture:d-l} is analogous to Orlov's conjecture \cite[Conjecture 1]{Orl05} saying that Chow motives with rational coefficients of D-equivalent varieties are isomorphic,
and similarly to Orlov's conjecture and to Kawamata's conjecture the converse implication 
definitely does not work in general --- L-equivalence of varieties does not imply their D-equivalence.
The simplest example here is provided by $\P^1 \times \P^1$ and the blowup of $\P^2$ in a point.
Both these surfaces have class $1 + 2\L + \L^2$ in the Grothendieck ring, but are not D-equivalent by \cite{BO}.
Note however that for any pair $X$, $Y$ of L-equivalent varieties their derived categories have the same class
$[\BD(X)] = [\BD(Y)]$ in the Bondal--Larsen--Lunts ring $\KDG$ because the homomorphism $\KVar \to \KDG$ sends
$\L$ to $1$ and thus factors through $\KVar[\L^{-1}]$.

Before going further, let us discuss some consequences of Conjecture~\ref{conjecture:d-l}.

First of all, L-equivalence of varieties $X$ and $Y$ implies equality of their Hodge numbers.
Indeed, evaluating the Hodge polynomial (considered as a map $\mathsf{h} \colon \KVar \to \Z[u,v]$) on the left and right hand sides of the relation $[X]\L^r = [Y]\L^r$, 
we deduce an equality $\mathsf{h}_X(u,v)(uv)^r = \mathsf{h}_Y(u,v)(uv)^r$ in $\Z[u,v]$, which of course implies an equality of Hodge polynomials $\mathsf{h}_X(u,v) = \mathsf{h}_Y(u,v)$ and hence an equality of the Hodge numbers.
A similar argument shows that L-equivalent varieties also have the same (motivic) zeta-functions.
Thus, Conjecture~\ref{conjecture:d-l} predicts equality of Hodge numbers, zeta-functions (and any other multiplicative motivic invariant 
whose value on the class $\L$ is not a zero divisor) for any pair of D-equivalent simply connected varieties.

Besides that, validity of Conjecture~\ref{conjecture:d-l} would provide a lot of nontrivial elements of the Grothendieck ring annihilated by a power of $\L$.
Indeed, the difference of any pair of derived equivalent non-birational Calabi--Yau varieties gives such an element, see Proposition~\ref{proposition:trivial-l-equivalence}.
So, a natural question to ask in this direction is:

\begin{question}\label{question:l-annihilator}
Do differences $[X]-[Y]$ of \textup{L}-equivalent smooth projective varieties generate the 
kernel of the localization homomorphism $\KVar \to \KVar[\L^{-1}]$?
\end{question}

A positive answer to this question would be helpful in certain birational geometry problems, for instance
in rationality questions for cubic hypersurfaces \cite{GS}.

Let us also discuss how Conjecture~\ref{conjecture:d-l} could be proved.
Of course, a natural way to attack it would be by considering the kernel of a Fourier--Mukai functor providing an equivalence of derived categories and cooking a relation in the Grothendieck ring from it.
It is not, however, clear, how this approach could be realized, even in the simplest case when $X$ is a $K3$ surface, $Y$ is a two-dimensional fine moduli space  
of sheaves on $X$ and the Fourier--Mukai kernel is given by the universal sheaf on $X \times Y$.
Note however, that in all examples, including the main result of this paper (see Theorem~\ref{theorem:main} below), 
the proof of L-equivalence of $X$ and $Y$ uses a correspondence that is evidently related to such a Fourier--Mukai kernel.

In this direction one more question seems natural.

\begin{question}
Assume $X$ and $Y$ are both \textup{D}-equivalent and \textup{L}-equivalent.
What is the categorical meaning of the minimal integer $r$ such that $([X] - [Y])\L^r = 0$?
\end{question}

It seems plausible that this integer is related to the rank of a Fourier--Mukai kernel defining an equivalence.
Indeed, in the known cases of D-equivalence in Example~\ref{example:k-equivalence} this kernel can be taken to be of rank~0 (as an object in the derived category of the product $X \times Y$), 
and as we explained, in these cases the corresponding integer $r$ is also zero.




\subsection{$K3$ surfaces and quadrics}

One of the main goals of this paper is to construct a new example of L-equivalence between D-equivalent varieties.
To be more precise, we consider an example of D-equivalent K3 surfaces, in fact, the simplest such example, and prove that these K3 surfaces are L-equivalent.

We start with a K3 surface $X$ of degree $8$ (i.e., an intersection of three quadrics in $\P^5$), 
and take $Y$ to be the corresponding K3 surface of degree 2 (i.e., the double cover of $\P^2$ branched over the sextic discriminant curve of the net of quadrics defining $X$).
Then it is well known (for instance, see~\cite{Kuz08}) that $Y$ carries a natural 2-torsion Brauer class $\alpha_Y \in \Br(Y)$ (this class depends on $X$),
such that
\begin{equation*}
\BD(X) \cong \BD(Y,\alpha_Y),
\end{equation*}
the category in the right hand side being the twisted derived category of $Y$ 
(it can be thought of as the derived category of sheaves of coherent modules over the corresponding Azumaya algebra on $Y$).
In general, the Brauer class $\alpha_Y$ is not trivial, and we show that it vanishes if and only if $X$ contains a curve of odd degree.
We prove that as soon as $\alpha_Y = 0$ so that $\BD(X) \cong \BD(Y)$, $X$ and $Y$ are also L-equivalent.
Moreover, we show that in general this L-equivalence is not trivial (it does not follow from an isomorphism of K3 surfaces).

More precisely, our result can be stated as follows.

\begin{theorem}\label{theorem:main}
Let $\kk$ be a field of characteristic not equal to~$2$.
Let $X$ be a smooth intersection of three quadrics in $\P^5$. 
Assume that the corresponding double cover $Y \to \P^2$ is also smooth
and let $\alpha_Y \in \Br(Y)$ be the corresponding Brauer class.
If $X$ has a $\kk$-point 
not lying on a line contained in $X$
and $\alpha_Y = 0$ then $X$ and $Y$ are \textup{D}-equivalent and \textup{L}-equivalent; explicitly
\begin{equation*}
\BD(X) \cong \BD(Y)
\qquad\text{and}\qquad
([X] - [Y])\L = 0.
\end{equation*}
Moreover, if $\kk = \C$ there is a countable union $M'$ of locally closed codimension~$1$ subsets of the moduli space 
of polarized \textup{K3} surfaces of degree $8$ such that for every $X$ in $M'$ we have $\alpha_Y = 0$, but $[X] - [Y] \ne 0$.
\end{theorem}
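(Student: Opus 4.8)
The plan is to treat the three assertions of the theorem separately. The \textup{D}-equivalence is essentially known: for the net of quadrics the sheaf of even Clifford algebras on $\P^2$ has centre the double cover $Y$ and defines there an Azumaya algebra of class $\alpha_Y$, and Kuznetsov's homological projective duality gives $\BD(X)\cong\BD(Y,\alpha_Y)$; when $\alpha_Y=0$ this reads $\BD(X)\cong\BD(Y)$, the hypothesis on the $\kk$-point serving to descend the relevant identifications from $\bar\kk$ to $\kk$. So the substance is the \textup{L}-equivalence, \emph{with the precise exponent $1$}, and the non-triviality over $\C$.

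\textbf{The \textup{L}-equivalence.} I would produce one auxiliary fourfold and compute its class in $\KVar$ in two ways. Fix a $\kk$-point $\bar t\in\P^2$ lying off the discriminant sextic $\Delta$, so the quadric $Q_{\bar t}$ of the net is smooth, and put $\mathcal V:=\Bl_X Q_{\bar t}$. Since $X\subset Q_{\bar t}$ is a smooth surface of codimension $2$ in a smooth fourfold, the blow-up formula gives $[\mathcal V]=[Q_{\bar t}]+\L\,[X]$. On the other hand the net defines $\P^5\dashrightarrow\P^2$, $x\mapsto[q_0(x){:}q_1(x){:}q_2(x)]$, with indeterminacy locus $X$; restricted to $Q_{\bar t}$ its image is the line $\bar t^{\perp}\cong\P^1$, and it is resolved by a morphism $\mathcal V\to\P^1$ whose fibre over $m$ is the intersection $V_m:=\bigcap_{t\in m}Q_t$ of the two quadrics spanning the line $m\subset\P^2$: a del Pezzo threefold of degree $4$ (smooth except for the finitely many $m$ tangent to $\Delta$), containing $X$ and hence the chosen point $p$, whose middle cohomology is governed by the genus-$2$ curve $Y_m:=Y\times_{\P^2}m$ (the restriction of $Y\to\P^2$ to $m$, nodal exactly when $m$ is tangent to $\Delta$). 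The key identity to prove is $[V_m]=1+\L^3+\L\,[Y_m]$ in $\KVar$, for every $m$ in the pencil through $\bar t$. Granting it, sum the fibrewise identity over the $\P^1$ of lines $m\ni\bar t$: since the universal curve $\{(m,y):\bar t\in m\ni\pi(y)\}\to Y$ is an isomorphism over $Y\smallsetminus\pi^{-1}(\bar t)$ and the trivial $\P^1$-bundle over $\pi^{-1}(\bar t)$, its class is $[Y]+\L\,[\pi^{-1}(\bar t)]$, whence $[\mathcal V]=(1+\L^3)(1+\L)+\L\,[Y]+\L^2\,[\pi^{-1}(\bar t)]$. A parallel, easier projection computation for the smooth fourfold quadric $Q_{\bar t}$ from the rational point $p$ gives $[Q_{\bar t}]=(1+\L+\L^3+\L^4)+\L^2\,[\pi^{-1}(\bar t)]$, the class $[\pi^{-1}(\bar t)]$ recording whether the two rulings of $Q_{\bar t}$ are rational over $\kk$. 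Comparing the two expressions for $[\mathcal V]$, all Tate terms and the term $\L^2[\pi^{-1}(\bar t)]$ cancel, leaving $\L\,[X]=\L\,[Y]$, i.e.\ $([X]-[Y])\L=0$.

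\textbf{The main obstacle.} The heart of the matter — and the place where the hypotheses ``$\kk$-point of $X$ off the lines of $X$'' and ``$\operatorname{char}\kk\neq 2$'' enter — is the identity $[V_m]=1+\L^3+\L[Y_m]$, in a form valid uniformly over the pencil, nodal members included. It should come from the classical birational geometry of a smooth intersection of two quadrics $V\subset\P^5$ that contains a line $\ell$: projection from $\ell$ is birational onto $\P^3$ and realizes $\Bl_\ell V$ as the blow-up of $\P^3$ along a curve one identifies with $Y_m$, after which the blow-up formula yields the identity. What the hypotheses buy is the ability to run this in a family $V_m$: the $\kk$-point $p$ lies on every $V_m$ and off the lines of $X$, which is what makes the relevant projection birational (over a finite cover of the pencil if $X$ itself contains no $\kk$-line), and the nodal degenerations have to be checked by the same argument applied to singular $V_m$ and nodal $Y_m$. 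Making this relative, characteristic-free version of the identity precise is where I expect the real difficulty to lie; the remainder is bookkeeping in $\KVar$.

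\textbf{Non-triviality over $\C$.} Here I would combine the criterion ``$\alpha_Y=0$ iff $X$ contains a curve of odd degree'' with Noether--Lefschetz theory and the structure of $\KVar$. For a class $C$ with $C\cdot H$ odd ($H$ the degree-$8$ polarization, $H^2=8$) the locus $\mathrm{NL}_C$ of polarized \textup{K3} surfaces whose Picard lattice contains $\langle H,C\rangle$ is a locally closed codimension-$1$ subset on which $\alpha_Y=0$, so $\{\alpha_Y=0\}$ is a countable union of such loci. One checks that one can take, say, $C^2=-2$ and $C\cdot H=3$: the lattice $\langle H,C\rangle$ then has signature $(1,1)$, contains no class of square $2$ and no isotropic class, so on the dense open subset of $\mathrm{NL}_C$ where $\operatorname{Pic}(X)=\langle H,C\rangle$ exactly: $H$ is very ample, $X$ is a smooth complete intersection of three quadrics with smooth double cover $Y$, $X$ has a $\kk$-point off its (finitely many, in fact here none of degree $1$) lines, and $X\not\cong Y$ because $Y$ carries a polarization of square $2$ whereas $\operatorname{Pic}(X)$ represents no such class. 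Finally, if $[X]=[Y]$ in $\KVar$ then by Larsen--Lunts $X$ and $Y$ are stably birational, hence birational (neither is uniruled), hence isomorphic (\textup{K3} surfaces are their own minimal models), a contradiction; cf.\ Proposition~\ref{proposition:trivial-l-equivalence}. Taking $M'$ to be the union, over all admissible $C$, of these dense open subsets of $\mathrm{NL}_C$ gives the required countable union of codimension-$1$ locally closed subsets with $\alpha_Y=0$ but $[X]\neq[Y]$.
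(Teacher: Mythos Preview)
Your \textup{D}-equivalence and non-triviality paragraphs are essentially in line with the paper (the latter uses the Madonna--Nikulin criterion rather than your direct check that the lattice does not represent~$2$, but the logic via Proposition~\ref{proposition:trivial-l-equivalence} is the same). The \textup{L}-equivalence argument, however, has a genuine gap, and it is precisely the one you flag as ``the main obstacle'' but then underestimate.

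The identity $[V_m]=1+\L^3+\L[Y_m]$ for a complete intersection of two quadrics in $\P^5$ is \emph{not} a consequence of having a $\kk$-point $p\in V_m$; the classical projection argument you invoke needs a $\kk$-line $\ell\subset V_m$, and projection from the point $p$ lands in $\P^4$, not $\P^3$. Passing to a finite cover of the pencil would only give the relation over an extension of $\kk$, which does not descend. More tellingly, your \textup{L}-equivalence sketch never uses the hypothesis $\alpha_Y=0$, yet this hypothesis is indispensable: without it the relation $([X]-[Y])\L=0$ is not expected to hold. In the paper's approach the hypothesis enters as follows. One performs hyperbolic reduction of the whole net $Q\to\P^2$ with respect to the $\kk$-point $P$, obtaining a family $\bar Q_P\to\P^2$ of two-dimensional quadrics; one identifies $\bar Q_P\cong\Bl_{X'}(\P^4)$ with $X'\cong\Bl_P(X)$ (this is where ``$P$ off the lines of $X$'' is used); a Chow-group computation on this blowup shows that $\bar Q_P\to\P^2$ admits a rational multisection of odd degree if and only if $X$ contains a curve of odd degree, and this in turn is equivalent to $\alpha_Y=0$. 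Only then does the reduction-to-the-double-cover theorem give $[\bar Q_P]=[\P^2](1+\L^2)+[Y]\L$, which compared with the blowup expression $[\bar Q_P]=[\P^4]+\L^2+[X]\L$ yields $([X]-[Y])\L=0$. If you try to salvage your pencil approach, you will find that proving $[V_m]=1+\L^3+\L[Y_m]$ fibrewise already requires this rational-section criterion (applied to $\bar Q_P|_m\to m$), and making the fibrewise identity sum to a statement about $[\mathcal V]$ requires a relative construction that is exactly the paper's argument run over $\P^2$ --- so the detour through $Q_{\bar t}$ and the pencil of del Pezzo threefolds buys nothing and cannot be completed independently. A second, smaller issue: even your formula $[Q_{\bar t}]=(1+\L+\L^3+\L^4)+\L^2[\pi^{-1}(\bar t)]$ presupposes that the intermediate $2$-dimensional quadric (lines through $p$ on $Q_{\bar t}$) has a $\kk$-point, which is again not automatic from the stated hypotheses.
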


Existence of a line on $X$ is a codimension one condition on the moduli space of K3 surfaces of degree~$8$.
If $X$ contains a line defined over $\kk$ then 
the Brauer class $\alpha_Y$ vanishes automatically.
However, in this case $Y \cong X$, so that the constructed L-equivalence is trivial.

The components of the subset $M'$ mentioned in the theorem are the moduli spaces of K3 surfaces with rank 2 Neron--Severi lattice and some special values of the discriminant, see Lemma~\ref{lemma:non-iso} for details.

Our approach to the theorem is based on studying families of quadrics and relations between their classes in the Grothendieck ring.
We establish a general result in this direction by proving for a flat family of $n$-dimensional quadrics $\cQ \to S$ that has a regular section a relation
\begin{equation*}
[\cQ] = [S](1 + \L^n) + [\bar\cQ]\L,
\end{equation*}
where $\bar\cQ$ is the hyperbolic reduction of $\cQ$ --- a family of quadrics over $S$ of dimension $n-2$ constructed from the section.
In case $n = 2$ we prove a strengthening of this result --- first we show that $\bar\cQ$ is nothing but the determinant double cover of $S$ associated to the family $\cQ$;
second we check that one can use a rational section of the family instead of a regular one to get the same relation;
finally we show that existence of a rational section is equivalent to the vanishing of the associated Brauer class.
We discuss this material in Section~\ref{sec:quadrics}.

The simple relations in the Grothendieck ring associated with families of quadrics that we discussed above is probably only the tip of an iceberg.
They can be considered as the Grothendieck ring analogs
of the simplest results about motives of quadrics over non algebraically closed fields, see e.g. \cite[Proposition 2]{Rost2}.
A good question is to find other relations between quadrics in the Grothendieck ring and to understand what are the Grothendieck ring 
incarnations of such fascinating objects as Rost motives \cite{Rost2}.
A related result is the computation by Koll\'ar of the subring of $\KVar$ generated by classes~$[C]$ of conics over $\kk$ \cite{Kol-conics}.

In Section~\ref{section:intersections} we apply these results as follows.
We start with a family of quadrics through $X$ that we think of as a family $Q \to \P^2$ of four-dimensional quadrics over $\P^2$.
We use the hyperbolic reduction with respect to a $\kk$-point $P \in X$ to reduce the dimension of the family of quadrics $Q$ by 2, and get in this way a family $\bar{Q}_P$ of two-dimensional quadrics over $\P^2$.
We show that its total space is isomorphic to the blowup of $\P^4$ with center in $X'$, the projection of $X$ to $\P^4$ from $P$.
Using this blowup representation to control the Chow groups of $\bar{Q}_P$, we show that the family $\bar{Q}_P \to \P^2$ has a rational section if and only if $X$ contains a curve of odd degree.
Since the determinant double cover of $\P^2$ associated with the family of quadrics $\bar{Q}_P$ is nothing but $Y$, we deduce a relation $[\bar{Q}_P] = [\P^2](1 + \L^2) + [Y]\L$,
which in a combination with the blowup relations for $\bar{Q}_P \to \P^4$ and $X' \to X$ implies $[X]\L = [Y]\L$.

%

For the second part of the Theorem we use a criterion of Madonna and Nikulin~\cite{MN} for an isomorphism of $X$ and $Y$.
It works very nicely when the Neron--Severi group of $X$ is of rank 2, and in this case it is formulated in terms of arithmetic properties of its discriminant.
We check that there is a countable set of discriminants that ensures that the surfaces are not isomorphic, and kills the Brauer class $\alpha_Y$ at the same time.

Of course, it would be very interesting to consider other examples of D-equivalent K3 surfaces.
In an example of Section~\ref{subsection:verra} we discuss a pair of K3-surfaces $Y_1$ and $Y_2$ of degree 2 associated with a Verra fourfold.
Each of them comes with a natural Brauer class, and we show that as soon as both classes vanish, we have an L-equivalence $([Y_1] - [Y_2])\L = 0$.
However, it is not clear yet that the surfaces are not isomorphic with this assumption.
We plan to discuss this question in future.

Also, in a work in progress we construct an L-equivalence related to a derived equivalence between a K3 surface $X$ of degree 16 and a twisted K3~surface $Y$ of degree~4.
We show that, similarly to the situation in Theorem~\ref{theorem:main}, as soon as the Brauer class on the degree 4 surface $Y$ vanishes, we have an L-equivalence $([X] - [Y])\L^3 = 0$ between $X$ and $Y$.

We also have a result for K3 surfaces of degree 12. 
It is one of classical Mukai's examples that there is an involution on the moduli space of such K3 surfaces such that K3 surfaces in involution are D-equivalent 
(note that a Brauer class does not appear here, so this case, in a sense, is much simpler then the cases discussed above).
We can show, in fact, that $([X] - [Y])\L^7 = 0$ for such K3 surfaces. 
More precise results on L-equivalence in this case has been proved independently by Ito, Miura, Okawa, and Ueda \cite{IMOU-K3}
(with relation $([X] - [Y])\L^3 = 0$)
and by Hassett and Lai \cite{HL} 
(with relation $([X] - [Y])\L = 0$).

\medskip

The authors would like to thank Tom Bridgeland, Sasha Efimov, Sergey Galkin, Valery Lunts, Viacheslav Nikulin, Shinnosuke Okawa, Dima Orlov, Alex Perry, Alexander Vishik,
Kazushi Ueda, Ziyu Zhang for discussions and interest in our work, Daniel Huybrechts for his comments on a draft of this paper
and the referee for the suggestions on improving the exposition.
The authors are grateful to the Higher School of Economics and the School of Mathematics and Statistics at the University of Sheffield
for providing numerous opportunities for collaboration.

\section{Quadric fibrations in the Grothendieck ring}
\label{sec:quadrics}

Unless stated otherwise all schemes are of finite type over a field $\kk$ of characteristic not equal to $2$.

\subsection{Preliminaries on the Grothendieck ring}

We start with discussing a couple of well-known properties of the Grothendieck ring.
The first is elementary.

\begin{lemma}\label{lemma:zltf}
Assume $M \to S$ is a Zariski locally trivial fibration with fiber $F$.
Then $[M] = [S][F]$.
\end{lemma}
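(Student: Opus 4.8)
The plan is to run a Noetherian induction on the closed subschemes of $S$; this is legitimate since $S$, being of finite type over $\kk$, is Noetherian. For a closed subscheme $T \subseteq S$ write $M_T := M \times_S T$. The first thing I would record is that $M_T \to T$ is again a Zariski locally trivial fibration with fiber $F$, because any Zariski-open cover of $S$ trivializing $M$ restricts to such a cover of $T$. Hence it suffices to prove $[M_T] = [T][F]$ for every closed subscheme $T \subseteq S$, and then specialize to $T = S$.

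Suppose this failed for some closed subscheme. By Noetherianity the collection of counterexamples would have a minimal element $T_0$, which is nonempty (for $T = \emptyset$ both sides of the claimed identity vanish). By definition of Zariski local triviality there is a nonempty open $U \subseteq T_0$ over which $M_{T_0}$ is trivial, i.e. $M_{T_0} \times_{T_0} U \cong U \times_{\Spec \kk} F$; hence $[M_{T_0} \times_{T_0} U] = [U][F]$ by multiplicativity of the class under products. Let $Z := T_0 \setminus U$ with its reduced structure, a proper closed subscheme of $T_0$ (possibly empty), so that $Z$ is not among the counterexamples and $[M_Z] = [Z][F]$. Applying the scissor relation in the Grothendieck ring to the decomposition of $M_{T_0}$ into the open part $M_{T_0} \times_{T_0} U$ and the closed part $M_Z$, and then the scissor relation for $T_0 = U \sqcup Z$, I get
\[
[M_{T_0}] = [M_{T_0} \times_{T_0} U] + [M_Z] = [U][F] + [Z][F] = ([U] + [Z])[F] = [T_0][F],
\]
contradicting that $T_0$ is a counterexample. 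Therefore there is no counterexample, and in particular $[M] = [M_S] = [S][F]$.

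There is essentially no obstacle here: the only point that needs a moment's thought is the stability of Zariski local triviality under restriction to a locally closed subscheme, which is exactly what makes the induction hypothesis applicable to $M_Z \to Z$; the rest is a formal manipulation of the defining relations of $\KVar$ together with its ring structure.
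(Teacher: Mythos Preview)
Your argument is correct and is essentially identical to the paper's own proof: both use Noetherian induction on $S$, pick a nonempty open $U$ over which the fibration is trivial, apply the induction hypothesis to the complement $Z = S \setminus U$, and sum the two scissor relations. The only cosmetic difference is that you phrase the induction via a minimal counterexample and make explicit the stability of Zariski local triviality under restriction, which the paper uses implicitly.
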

\begin{proof}
We use Noetherian induction on $S$.
When $S$ is empty, there is nothing to prove.
Otherwise, let $U \subset S$ be an open subset over which the fibration is trivial.
Then $M_U \cong U \times F$, hence $[M_U] = [U][F]$.
On the other hand, if $Z = S \setminus U$ then $M_{Z} \to Z$ is a Zariski locally trivial fibration with fiber $F$, hence by induction $[M_{Z}] = [{Z}][F]$.
Summing up these relations, we deduce the lemma.
\end{proof}

The next property is much less trivial and relies on deep results in birational geometry.

\begin{proposition}{\cite[Corollary 1]{LS}}\label{proposition:trivial-l-equivalence}
Let $\kk$ be a field of characteristic zero.
Assume $X$ and $Y$ are smooth projective Calabi--Yau varieties over $\kk$.
If $[X] = [Y]$ in $\KVar$ then $X$ is birational to $Y$.
\end{proposition}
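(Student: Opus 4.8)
The plan is to deduce the statement in two stages: first, to extract \emph{stable} birational equivalence of $X$ and $Y$ from the equality of their classes by means of a motivic measure, and then to upgrade stable birational equivalence to genuine birational equivalence using that a Calabi--Yau variety is not uniruled.

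For the first stage, I would invoke the theorem of Larsen and Lunts: over a field of characteristic zero, the assignment sending the class of a smooth projective variety to its stable birational equivalence class is well defined and extends to a ring homomorphism $\mu\colon \KVar \to \Z[SB]$, where $SB$ is the monoid of stable birational classes of smooth projective $\kk$-varieties under product and $\Z[SB]$ is the corresponding monoid ring, whose underlying abelian group is free on the set $SB$. The only point to check is that $\mu$ respects the scissor relations defining $\KVar$, and this is where characteristic zero enters, through the weak factorization theorem and resolution of singularities. Applying $\mu$ to the hypothesis $[X] = [Y]$ yields $\mu([X]) = \mu([Y])$; since both sides are basis vectors of the free abelian group $\Z[SB]$, this forces the stable birational classes of $X$ and $Y$ to coincide, i.e.\ $X \times \P^n$ is birational to $Y \times \P^m$ for some $n, m \ge 0$.

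For the second stage, recall that a smooth projective Calabi--Yau variety is not uniruled: its plurigenera do not all vanish, so its Kodaira dimension is $0 \neq -\infty$. Now if $V$ is smooth projective and not uniruled, then the projection $V \times \P^n \to V$ has rationally connected fibres and non-uniruled base, hence by the characterization and uniqueness (up to birational equivalence) of the maximal rationally connected fibration --- which relies on the Graber--Harris--Starr theorem in characteristic zero --- it \emph{is} the MRC fibration of $V \times \P^n$. Since the base of the MRC fibration is a birational invariant of the total space, the birational equivalence $X \times \P^n \dashrightarrow Y \times \P^m$ obtained above induces a birational equivalence of the MRC bases, namely $X \dashrightarrow Y$, which is the desired conclusion.

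The main obstacle is the second stage: the descent from stable birational to birational equivalence for non-uniruled varieties rests on the structure theory of rationally connected fibrations (Kollár--Miyaoka--Mori, Graber--Harris--Starr) and is the only genuinely nontrivial ingredient, the first stage being formal once the Larsen--Lunts measure is in hand. I would also note that the Calabi--Yau hypothesis is used only through non-uniruledness, so exactly the same argument proves the analogous statement for any pair of smooth projective non-uniruled varieties.
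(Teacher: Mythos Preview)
Your proposal is correct and follows essentially the same route as the paper's proof: both invoke the Larsen--Lunts theorem to deduce stable birational equivalence from $[X]=[Y]$, and then use the maximal rationally connected fibration (citing \cite{KMM}) to descend to genuine birational equivalence of the Calabi--Yau varieties. The only cosmetic difference is that you identify the projection $V\times\P^n\to V$ as the MRC fibration via non-uniruledness of the base (invoking Graber--Harris--Starr), whereas the paper argues that the MRC quotient of $X\times\P^m$ factors through $X$ and hence agrees with the MRC quotient of $X$, which is $X$ itself since $X$ is Calabi--Yau.
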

\begin{proof}
If $[X] = [Y]$ then of course the images of $[X]$ and $[Y]$ in the quotient ring $\KVar/\L$ are equal.
By~\cite{LL} this quotient is isomorphic to the free abelian group generated by stable birational classes of smooth projective varieties.
So, it follows that $X \times \P^m$ is birational to $Y \times \P^n$ for some integers $m$ and~$n$.
Let $X \times \P^m \dashrightarrow \bar{X}$ and $Y \times \P^n \dashrightarrow \bar{Y}$ be the maximal rationally connected quotients \cite{KMM}.
Then $\bar{X}$ and $\bar{Y}$ are birational, since $X \times \P^m$ and $Y \times \P^n$ are.
Moreover, the map $X \times \P^m \dashrightarrow \bar{X}$ evidently factors through $X$, hence $\bar{X}$ is the maximal rationally connected quotient of $X$, and since $X$ is a Calabi--Yau variety, $\bar{X}$ is birational to $X$.
Analogously, $\bar{Y}$ is birational to $Y$.
Thus $X$ is birational to $Y$.
\end{proof}

\subsection{Families of quadrics and hyperbolic reduction}

Let $S$ be an algebraic variety and $p \colon Q \to S$ a flat family of $n$-dimensional quadrics over $S$.
In other words, we assume that there is a vector bundle~$\cE$ of rank $n + 2$ over $S$, a line bundle $\cL$, and an embedding of vector bundles $q \colon \cL \to S^2\cE^\vee$.
A natural isomorphism
\begin{equation*}
H^0(\P_S(\cE),\pi^*\cL^\vee \otimes \cO_{\P_S(\cE)/S}(2)) \cong
H^0(S,\cL^\vee \otimes S^2\cE^\vee) \cong
\Hom(\cL,S^2\cE^\vee),
\end{equation*}
where $\pi \colon \P_S(\cE) \to S$ is the projection, associates to $q$ a section of the line bundle $\pi^*\cL^\vee \otimes \cO_{\P_S(\cE)/S}(2)$ on~$\P_S(\cE)$.
The family of quadrics associated with $q$ is its zero locus $Q \subset \P_S(\cE)$, so that we have a diagram
\begin{equation*}
\xymatrix{
Q \ar@{^{(}->}[rr] \ar[dr]_p && \P_S(\cE) \ar[dl]^\pi 
\\
& S
}
\end{equation*}
Given a family of quadrics as above we consider the sheaf
\begin{equation*}
\cC := \Coker(\cE \xrightarrow{\ q\ } \cE^\vee \otimes \cL^\vee),
\end{equation*}
called {\sf the cokernel sheaf} of the family.
We denote by $S_{\ge k} \subset S$ the locus where the corank of the map~$q$ is at least $k$.
Equivalently, this is the locus where the rank of the sheaf $\cC$ is at least $k$.
The locus $S_{\ge k} \subset S$ has a natural scheme structure (its ideal is generated by the minors of $q$ of appropriate size).
Note that flatness of the quadric fibration $p: Q \to S$ is equivalent to the rank of the quadrics being everywhere nonzero, that is to $S_{\ge n+2} = \emptyset$.

For every $k \ge 0$ we denote by $F_k(Q/S)$ the relative Hilbert scheme of projective $k$-spaces in fibers of~$Q$ over~$S$.
We denote by $p_k \colon F_k(Q/S) \to S$ the natural projection.
We have a diagram
\begin{equation*}
\xymatrix{
F_k(Q/S) \ar@{^{(}->}[rr] \ar[dr]_{p_k} && \Gr_S(k+1,\cE) \ar[dl]^{\pi_k}
\\
& S
}
\end{equation*}
where $\Gr_S(k+1,\cE)$ is the relative Grassmannian of $(k+1)$-dimensional subspaces in the fibers of $\cE$ and~$\pi_k$ is its natural projection.
Note that $F_0(Q/S) = Q$ and $p_0 = p$.

We define a {\sf $k$-section} of $Q \to S$ to be a regular morphism $s \colon S \to F_k(Q/S)$ such that $p_k \circ s = \id_S$.
Thus a $0$-section of $Q$ is simply a section of $Q \to S$.
We call a $k$-section $s$ {\sf nondegenerate} if for any geometric point $x \in S$ the linear space $s(x) \subset Q_x$ does not intersect $\Sing(Q_x)$.

Giving a $k$-section $s$ is equivalent to giving a vector subbundle $\cU_{k+1} \hookrightarrow \cE$ of rank $k+1$, which is isotropic with respect to $q$, i.e., such that the composition
\begin{equation*}
\cU_{k+1} \hookrightarrow \cE \xrightarrow{\ q\ } \cE^\vee \otimes \cL^\vee \twoheadrightarrow \cU_{k+1}^\vee \otimes \cL^\vee
\end{equation*}
is zero.
A section $s_{\cU_{k+1}}$ corresponding to a vector subbundle $\cU_{k+1}$ is nondegenerate if and only if the composition $\cE \xrightarrow{\ q\ } \cU_{k+1}^\vee \otimes \cL^\vee$ of the two maps above is an epimorphism.
Indeed, this map is not an epimorphism if and only if the dual map $\cU_{k+1} \otimes \cL \to \cE^\vee$ is not a monomorphism at some closed point $x \in S$, that is there is a vector $u \in \cU_{k+1,x}$ such that $q(u,\cE_x) = 0$.
But then $u$ is a singular point of the quadric $Q_x$ that lies on the linear space $s(x) = \P(\cU_{k+1,x})$.

\begin{remark}
If the total space $Q$ of a flat family of quadrics is smooth, then every $k$-section is nondegenerate. 
Indeed, the differential of the composition $\P_S(\cU_{k+1}) \hookrightarrow Q \xrightarrow{p} S$ is surjective (since the projection map $\P_S(\cU_{k+1}) \to S$ is smooth), 
hence the differential of the map $p \colon Q \to S$ is surjective at any point of~$\P_S(\cU_{k+1}) \subset Q$. 
Therefore, every such point is a smooth point of the corresponding fiber.
\end{remark}

If $Q \to S$ is a family of $n$-dimensional quadrics and $s = s_{\cU_{k+1}}$ is its nondegenerate $k$-section, 
we define a new family of quadrics as follows.
We consider a complex
\begin{equation}\label{eq:bar-e}
0 \to \cU_{k+1} \xrightarrow{\ \ \ } \cE \xrightarrow{\ q\ } \cU_{k+1}^\vee \otimes \cL^\vee \to 0
\end{equation}
We define $\bar\cE_s$ to be the middle cohomology sheaf of this complex, that is $\bar\cE_s = \cU_{k+1}^\perp / \cU_{k+1} \subset \cE / \cU_{k+1}$,
where $\cU_{k+1}^\perp := \Ker (\cE \xrightarrow{\ q\ } \cU_{k+1}^\vee  \otimes \cL^\vee)$.
The sheaf $\bar\cE_s$ is locally free since $s$ is nondegenerate.
Since this is the only cohomology sheaf of~\eqref{eq:bar-e}, the dual complex
\begin{equation*}
0 \to \cU_{k+1} \otimes \cL \xrightarrow{\ q\ } \cE^\vee \xrightarrow{\ \ \ } \cU_{k+1}^\vee \to 0
\end{equation*}
also has the only cohomology sheaf, which sits in the middle term, and is isomorphic to $\bar\cE_s^\vee$.
Moreover, we have a natural self-adjoint commutative diagram
\begin{equation}\label{eq:e-ebar}
\vcenter{\xymatrix{
0 \ar[r] &
\cU_{k+1} \ar[r] \ar@{=}[d] &
\cE \ar[r]^-q \ar[d]^q &
\cU_{k+1}^\vee \otimes \cL^\vee \ar[r] \ar@{=}[d] &
0
\\
0 \ar[r] &
\cU_{k+1} \ar[r]^-q &
\cE^\vee \otimes \cL^\vee \ar[r] &
\cU_{k+1}^\vee \otimes \cL^\vee \ar[r] &
0
}}
\end{equation}
By passing to the cohomology sheaves of the complexes, it induces a self-adjoint morphism of vector bundles $\bar{q} \colon \bar\cE_s \to \bar\cE_s^\vee \otimes \cL^\vee$, i.e., a family of quadrics of dimension $n - 2k -2$
\begin{equation*}
\bar{Q}_s \subset \P_S(\bar\cE_s).
\end{equation*}
We call it the {\sf hyperbolic reduction} of $Q$ with respect to section $s$ (\emph{cf}.~\cite[Section~1.3]{ABB}).
Note that~$\bar{Q}_s$ is not necessary flat, but there is a simple criterion for its flatness.

\begin{lemma}\label{lemma:corank-bar-s}
The corank stratification for the quadric fibration $\bar{Q}_s$ coincides with the corank stratification for $Q$.
In particular, if $\dim Q/S = n$ and $s$ is a $k$-section, then $\bar{Q}_s$ is flat iff $S_{\ge n-2k} = \emptyset$.
\end{lemma}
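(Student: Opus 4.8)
The plan is to compare the two corank stratifications fiber by fiber, reducing everything to a statement in linear algebra about a single quadratic form. Fix a geometric point $x \in S$; we must show that the corank of the quadric $\bar{Q}_{s,x}$ (with respect to the form $\bar{q}_x$ on $\bar\cE_{s,x}$) equals the corank of $Q_x$ (with respect to $q_x$ on $\cE_x$). Here $\bar\cE_{s,x}$ is the middle cohomology of the complex $\cU_{k+1,x} \to \cE_x \to \cU_{k+1,x}^\vee \otimes \cL_x^\vee$, which by nondegeneracy of $s$ is exactly $\cU_{k+1,x}^\perp / \cU_{k+1,x}$, where the first map is injective and the second is surjective. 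So the claim is: if $V$ is a finite-dimensional vector space with a symmetric bilinear form $b$ (possibly degenerate), and $U \subset V$ is a totally isotropic subspace with $U \cap \ker(b) = 0$ (the nondegeneracy condition translated: the singular locus $\P(\ker b)$ does not meet $\P(U)$), then the induced form $\bar b$ on $U^\perp / U$ has the same radical dimension as $b$ on $V$, i.e. $\dim \ker \bar b = \dim \ker b$.

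First I would verify this linear-algebra fact directly. The key points: (i) $\ker(b) \subseteq U^\perp$, because $U^\perp$ is the set of vectors orthogonal to $U$ and $\ker(b)$ is orthogonal to everything; combined with $\ker(b) \cap U = 0$ this gives an injection $\ker(b) \hookrightarrow U^\perp/U$. (ii) The image of $\ker(b)$ in $U^\perp/U$ lands in $\ker(\bar b)$, since a vector killing all of $V$ certainly kills all of $U^\perp$. (iii) Conversely, if $v \in U^\perp$ represents a class in $\ker(\bar b)$, then $b(v, U^\perp) = 0$; since also $b(v, U) = 0$ by $v \in U^\perp$, and $V = U^\perp + W$ for a complement $W$ chosen so that the pairing between $U$ and $W$ is perfect (possible because $\cU_{k+1} \otimes \cL \to \cE^\vee$ is injective on $x$, the dual of the epimorphism condition), one checks that the residual pairing forces $v$ into $\ker(b) + U$. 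This matching of coranks is the conceptual heart of the argument and the only place real thought is needed; everything else is bookkeeping. I expect this to be the main obstacle only in the mild sense of getting the complement $W$ and the dimension count exactly right — it is not deep, but it must be done carefully, in particular using the nondegeneracy of $s$ in the form that $q_x$ restricted to a complement of $\cU_{k+1,x}$ still pairs perfectly with $\cU_{k+1,x}^\vee$.

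Having established the fiberwise equality of coranks, I would then note that the corank-$\geq k$ loci $S_{\geq k}$ are defined scheme-theoretically by the vanishing of minors of the matrices of $q$ and of $\bar q$ respectively, and the preceding argument shows these minor ideals cut out the same reduced loci; to get equality of the stratifications as claimed it suffices to have this set-theoretic identification, which is what ``coincides'' should be read as here (the strata are the locally closed differences $S_{\geq k} \setminus S_{\geq k+1}$). For the final sentence, recall that flatness of a family of quadrics of relative dimension $m$ is equivalent to the rank being everywhere nonzero, i.e. corank $\leq m+1$ everywhere, i.e. $S_{\geq m+2} = \emptyset$. Since $\bar{Q}_s$ has relative dimension $n - 2k - 2$, flatness of $\bar{Q}_s$ is equivalent to $S_{\geq (n-2k-2)+2} = S_{\geq n-2k} = \emptyset$; by the coincidence of corank stratifications this last condition is intrinsic to $Q$, which gives the stated criterion. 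This is immediate once the main claim is in hand.
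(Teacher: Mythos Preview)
Your proof is correct, but the paper takes a slicker global route. Rather than checking coranks fiber by fiber via linear algebra, the paper observes directly from the self-adjoint diagram~\eqref{eq:e-ebar} that the cokernel sheaves
\[
\cC = \Coker\big(\cE \xrightarrow{\ q\ } \cE^\vee \otimes \cL^\vee\big)
\qquad\text{and}\qquad
\bar\cC = \Coker\big(\bar\cE_s \xrightarrow{\ \bar q\ } \bar\cE_s^\vee \otimes \cL^\vee\big)
\]
are isomorphic as sheaves on $S$; since the corank stratifications are by definition the rank stratifications of these cokernel sheaves, this gives the (scheme-theoretic) coincidence in one stroke. Your fiberwise computation is essentially the stalk-by-stalk shadow of this isomorphism: the linear-algebra fact you prove, that $\ker \bar b \cong \ker b$ when $U \cap \ker b = 0$, is exactly the statement $\cC_x \cong \bar\cC_x$ at a geometric point. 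The paper's version is shorter and yields the scheme-theoretic equality without the hedge you make about reading ``coincides'' set-theoretically; your version has the virtue of being completely elementary and makes explicit where the nondegeneracy of $s$ enters. For the flatness criterion both approaches are equally sufficient.
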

\begin{proof}
The corank stratification of $Q$ is the rank stratification for the sheaf $\cC := \Coker(\cE \xrightarrow{\ q\ } \cE^\vee \otimes \cL^\vee)$.
On the other hand, from the diagram~\eqref{eq:e-ebar} it easily follows that $\cC \cong \bar\cC := \Coker(\bar\cE_s \xrightarrow{\ \bar{q}\ } \bar{\cE}_s^\vee \otimes \cL^\vee)$.
Therefore, this stratification coincides with the rank stratification for $\bar\cC$, i.e., with the corank stratification of $\bar{Q}_s$.

Since the nonflat locus of $\bar{Q}_s$ is the same as the zero locus of $\bar{q}$, i.e., the corank $n-2k$ locus of $\bar{q}$ (note that the rank of $\cE$ is equal to $n + 2$ and the rank of~$\bar\cE_s$ is $(n+2)-2(k+1) = n-2k$),
we conclude that~$\bar{q}$ is flat if and only if the locus $S_{\ge n-2k}$ is empty.
\end{proof}

Homological properties of $Q$ and $\bar{Q}_s$ are closely related.
One can check that the sheaves of even parts of Clifford algebras on $S$ corresponding to the families of quadrics $Q$ and $\bar{Q}_s$ are Morita-equivalent 
(this is proved in~\cite[Theorem~1.8.7]{ABB} under assumption $S_{\ge 2} = \emptyset$, but the fact is more general). 
It follows then from~\cite[Theorem~4.2]{Kuz08} that the interesting parts of the derived categories $\BD(Q)$ and $\BD(\bar{Q}_s)$ are the same.

When $S = \Spec(\kk)$, the Chow motive of $Q$ is up to Tate motives equal to the Tate twist of the Chow motive of $\bar{Q}_s$ \cite[Proposition 2]{Rost2},
and the same result must be true for the motives of $Q$ and $\bar{Q}_s$ in the Voevodsky category of motives over the general base $S$.

Our first goal is to relate the classes of $Q$ and $\bar{Q}_s$ in the Grothendieck ring.

\subsection{The hyperbolic reduction relation}

Let $Q \to S$ be a family of $n$-dimensional quadrics and let $s \colon S \to F_k(Q/S)$ be its non-degenerate $k$-section.
Let $\cU_{k+1} \hookrightarrow \cE$ be the subbundle corresponding to the section~$s$ so that we have $\P_S(\cU_{k+1}) \subset Q$.

\begin{proposition}\label{lemma:q-blowup}
The blow up $f \colon Q' \to Q$ with the center in $\P_S(\cU_{k+1})$ fits into a diagram
\begin{equation*}
\xymatrix@C=3em{
&& Q' \ar[dl]_{f} \ar[dr]^{g} \\
\P_S(\cU_{k+1}) \ar@{^{(}->}[r] & Q && \P_S(\cE/\cU_{k+1}) & \bar{Q}_s \ar@{_{(}->}[l] \\
}
\end{equation*}
where the map $g$ has fibers $\P^{k+1}$ over $\bar{Q}_s$ and fibers $\P^k$ over its complement,
and is Zariski locally trivial over both strata.
\end{proposition}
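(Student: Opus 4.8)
The plan is to carry out the whole construction inside the blow-up $\wt{P} := \Bl_{\P_S(\cU_{k+1})}\P_S(\cE)$ of the ambient projective bundle. Since $\P_S(\cU_{k+1}) \subseteq Q \subseteq \P_S(\cE)$, the blow-up $Q' = \Bl_{\P_S(\cU_{k+1})}Q$ is the strict transform of $Q$ under the blow-down $\wt{P} \to \P_S(\cE)$, and $f$ is the restriction of this blow-down to $Q'$. On the other hand, the blow-up of a linear subbundle in a projective bundle resolves the linear projection away from that subbundle: there is a canonical isomorphism $\wt{P} \cong \P_{\P_S(\cE/\cU_{k+1})}(\mathcal{G})$ for a rank $k+2$ vector bundle $\mathcal{G}$ on $\P_S(\cE/\cU_{k+1})$ (up to a twist, $\mathcal{G}$ is the direct sum of $\OO_{\P_S(\cE/\cU_{k+1})}(-1)$ with the pullback of $\cU_{k+1}$ from $S$), the fibre of the projection $\wt\pi \colon \wt{P}\to\P_S(\cE/\cU_{k+1})$ over a point $\bar\zeta$ being the projectivization $\P(\cU_{k+1,x}\oplus\kk\zeta)$ of the preimage of the line $\kk\bar\zeta$ under $\cE_x\to\cE_x/\cU_{k+1,x}$. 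In particular $\wt\pi$ is a Zariski locally trivial $\P^{k+1}$-bundle, and I define $g$ to be the restriction of $\wt\pi$ to $Q'$. This produces the diagram, and it remains to analyze $g$.

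The heart of the matter is that $Q'\subset\wt{P}$ is cut out by a \emph{relatively linear} equation over $\P_S(\cE/\cU_{k+1})$. Working Zariski-locally on $S$, I would choose a splitting $\cE = \cU_{k+1}\oplus\cW$ (so $\cW\cong\cE/\cU_{k+1}$) and write $q = q_1 + q_0$ with $q_1\in\Hom(\cW,\cU_{k+1}^\vee\otimes\cL^\vee)$ (the composite $\cW\hookrightarrow\cE\xrightarrow{q}\cE^\vee\otimes\cL^\vee\twoheadrightarrow\cU_{k+1}^\vee\otimes\cL^\vee$) and $q_0$ a section of $S^2\cW^\vee\otimes\cL^\vee$; there is no $S^2\cU_{k+1}^\vee$-component because $\cU_{k+1}$ is isotropic, and nondegeneracy of $s$ says exactly that $q_1$ is an epimorphism. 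On the fibre $\P(\cU_{k+1,x}\oplus\kk\zeta)$ with homogeneous coordinates $(x,\lambda)$ the pulled-back equation $f^*q$ restricts to $\lambda\bigl(\langle q_1(\zeta),x\rangle + q_0(\zeta)\lambda\bigr)$, where $\{\lambda=0\}=\P(\cU_{k+1,x})$ is the trace on this fibre of the exceptional divisor $E$. Two consequences follow. First, $f^*q$ is divisible by the equation of $E$ exactly once along the generic point of $E$ — this is precisely where nondegeneracy is used, since it forces $q_1(\zeta)\neq 0$ for generic $\zeta$, so that the residual factor is not a multiple of $\lambda$ — whence the strict transform $Q'$ is cut out by the quotient $h := f^*q/(\text{equation of }E)$. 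Second, $h$ restricts on each fibre to the linear form $\ell_{\bar\zeta}(x,\lambda)=\langle q_1(\zeta),x\rangle+q_0(\zeta)\lambda$, so $h$ is a section of $\OO_{\wt{P}/\P_S(\cE/\cU_{k+1})}(1)$ twisted by a line bundle from $\P_S(\cE/\cU_{k+1})$; equivalently, after twisting $\mathcal{G}$, it is a morphism of vector bundles $\phi\colon\mathcal{G}\to\mathcal{M}$ with $\mathcal{M}$ a line bundle.

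It then remains to identify where $\phi$ degenerates, i.e.\ where $\ell_{\bar\zeta}\equiv 0$, which is the locus $\{q_1(\zeta)=0\}\cap\{q_0(\zeta)=0\}$. Since $q_1\colon\cW\to\cU_{k+1}^\vee\otimes\cL^\vee$ is surjective, $\Ker q_1$ is a subbundle, canonically identified with $\cU_{k+1}^\perp/\cU_{k+1}=\bar\cE_s$, so $\{q_1(\zeta)=0\}$ is the linear subbundle $\P_S(\bar\cE_s)\subset\P_S(\cE/\cU_{k+1})$; cutting it further by $\{q_0(\zeta)=0\}$ and observing that $q_0$ restricts on $\Ker q_1$ to $\bar q$, one recovers exactly the hyperbolic reduction $\bar Q_s\subset\P_S(\bar\cE_s)\subset\P_S(\cE/\cU_{k+1})$. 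Therefore: over $\P_S(\cE/\cU_{k+1})\setminus\bar Q_s$ the section $h$ is fibrewise nonzero, so $g^{-1}$ of this open set is $\P(\Ker\phi)$, a Zariski locally trivial $\P^k$-bundle; over $\bar Q_s$ the section $h$ vanishes on every fibre, so $g^{-1}(\bar Q_s)=\wt\pi^{-1}(\bar Q_s)$, a Zariski locally trivial $\P^{k+1}$-bundle. In particular $g$ is surjective, which completes the argument.

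The step I expect to be the main obstacle is the combination of (i) identifying $\wt{P}$ with a projective bundle over $\P_S(\cE/\cU_{k+1})$ — the standard description of the blow-up of a linear subbundle — compatibly with the blow-down and with the subbundle $\bar\cE_s$, and (ii) showing that $f^*q$ is the equation of $E$ times a relatively linear form $h$; assertion (ii) is exactly where the nondegeneracy hypothesis is essential, since without it $Q'$ would absorb a component of the exceptional divisor and $g$ would not have the stated fibres. Since the statement and all the fibration claims are Zariski-local on $S$ and may be checked on the underlying reduced schemes — which is all that is needed afterwards, when passing to classes in $\KVar$ — the reductions to a splitting and to irreducible $S$ are harmless.
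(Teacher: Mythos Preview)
Your proof is correct and follows essentially the same strategy as the paper: identify $\Bl_{\P_S(\cU_{k+1})}\P_S(\cE)$ with a $\P^{k+1}$-bundle over $\P_S(\cE/\cU_{k+1})$, show that $Q'$ is cut out there by a relatively linear equation (the paper does this via the divisor class computation $2H+D-E=H+H'+D$, you do it by the explicit local factorisation $f^*q=\lambda\cdot h$), and identify the degeneracy locus of that linear form with $\bar Q_s$. The only stylistic difference is that the paper works globally with the exact sequences~\eqref{eq:seq1}--\eqref{eq:seq2} on $\P_S(\cE/\cU_{k+1})$ (which also yields Lemma~\ref{lemma:bar-q-explicit-general} as an immediate byproduct), whereas you work Zariski-locally on $S$ with a chosen splitting $\cE=\cU_{k+1}\oplus\cW$; one small caveat is that your parenthetical description of $\mathcal{G}$ as a direct sum holds only after such a splitting---globally it is the nontrivial extension the paper calls $\cU_{k+2}$.
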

\begin{proof}
The blowup of a projective space $\P^{n+1}$ in a linear subspace $\P^k$ is a $\P^{k+1}$-bundle over a smaller projective space $\P^{n-k}$.
We think of $\P_S(\cU_{k+1}) \subset \P_S(\cE)$ as of a relative version of $\P^k \subset \P^{n+1}$;
then the relative version of $\P^{n-k}$ is $\P_S(\cE/\cU_{k+1})$.
Denote by $H$ and $H'$ the relative hyperplane classes of $\P_S(\cE)$ and~$\P_S(\cE/\cU_{k+1})$ respectively.
Define a vector bundle $\cU_{k+2}$ of rank $k + 2$ on $\P_S(\cE/\cU_{k+1})$ from the diagram
\begin{equation}\label{eq:diagram}
\vcenter{\xymatrix{
0 \ar[r] & {\pi'}^*\cU_{k+1} \ar[r] \ar@{=}[d] & \cU_{k+2} \ar[r] \ar[d] & \OO(-H') \ar[r] \ar[d] & 0
\\
0 \ar[r] & {\pi'}^*\cU_{k+1} \ar[r] & {\pi'}^*\cE \ar[r] & {\pi'}^*(\cE/\cU_{k+1}) \ar[r] & 0,
}}
\end{equation}
where $\pi' \colon \P_S(\cE/\cU_{k+1}) \to S$ is the projection and the top line is the pullback of the bottom line
with respect to the right vertical map.
The relative version of the isomorphism mentioned above is then the isomorphism at the top of the following diagram
\begin{equation*}
\xymatrix{
& Q' \ar[dl]_f \ar@{..>}[drrr]^g \ar@{^{(}->}[r] & \Bl_{\P_S(\cU_{k+1})}(\P_S(\cE)) \ar[dl]_{\tilde{f}} \ar@{=}[r]^\sim & \P_{\P_S(\cE/\cU_{k+1})}(\cU_{k+2}) \ar[dr]^{\tilde{g}}
\\
Q \ar@{^{(}->}[r] & \P_S(\cE) &&& \P_S(\cE/\cU_{k+1})
}
\end{equation*}
We obtain a chain of maps (the dotted arrow of the above diagram)
\begin{equation*}
Q' := \Bl_{\P_S(\cU_{k+1})}(Q) \hookrightarrow \Bl_{\P_S(\cU_{k+1})}(\P_S(\cE)) \cong \P_{\P_S(\cE/\cU_{k+1})}(\cU_{k+2}) \to \P_S(\cE/\cU_{k+1}).
\end{equation*}
Our goal is to describe the fibers of this composition $g \colon Q' \to \P_S(\cE/\cU_{k+1})$.
To unburden formulas we will use the same notation for the pullbacks of vector bundles with respect to natural morphisms as for the bundles themselves.
We will also denote by $D$ the divisor class of the line bundle $\cL^\vee$, and by $E$ the exceptional divisor of the blowup $\Bl_{\P_S(\cU_{k+1})}(\P_S(\cE))$.

We have a natural relation in the Picard group of the blowup
\begin{equation}\label{eq:picard-relation}
H' = H - E.
\end{equation}
On the other hand, $Q' \subset \Bl_{\P_S(\cU_{k+1})}(\P_S(\cE))$ is a divisor in the class $2H + D - E$.
By~\eqref{eq:picard-relation} it can be rewritten as \hbox{$H + H' + D$}.
Since $H$ is also a relative hyperplane class for $\tilde{g} \colon \P_{\P_S(\cE/\cU_{k+1})}(\cU_{k+2}) \to \P_S(\cE/\cU_{k+1})$, while~$D$ and $H'$ are pullbacks from $\P_S(\cE/\cU_{k+1})$, it follows that 
the general fiber of $g \colon  Q' \to \P_S(\cE/\cU_{k+1})$ is a hyperplane $\P^k$ in $\P^{k+1}$, while special fibers are the whole spaces~$\P^{k+1}$.

The locus over which the fibers jump is the zero locus of the section $q'$ of the pushforward of the line bundle $\cO(H + H' + D)$ with respect to the map $\tilde{g}$.
By projection formula, we have
\begin{equation*}
\tilde{g}_*\cO(H + H' + D) \cong 
(\tilde{g}_*\cO(H)) \otimes \cO(H' + D) \cong 
\cU_{k+2}^\vee(H' + D) \cong \cU_{k+2}^\vee \otimes \cL^\vee(H'),
\end{equation*}
and we have a section $q' \in H^0(\P_S(\cE/\cU_{k+1}), \cU_{k+2}^\vee \otimes \cL^\vee(H'))$ corresponding to $Q'$.

To analyze the zero locus of $q'$ we consider two short exact sequences on $\P_S(\cE/\cU_{k+1})$
\begin{equation}\label{eq:seq1}
0 \to \cU_{k+2}^\vee \otimes \cL^\vee(H') \to S^2\cU_{k+2}^\vee \otimes \cL^\vee \to  S^2\cU_{k+1}^\vee \otimes \cL^\vee \to 0,
\end{equation}
\begin{equation}\label{eq:seq2}
0 \to \cL^\vee(2H') \to \cU_{k+2}^\vee \otimes \cL^\vee(H') \to \cU_{k+1}^\vee \otimes \cL^\vee(H') \to 0
\end{equation}
obtained from the defining sequence~\eqref{eq:diagram} of $\cU_{k+2}$ by taking symmetric square, dualizing, and twisting.

The section of the middle term of (\ref{eq:seq1}) given by $q$ vanishes when projected to the third term (since~$\cU_{k+1}$ is an isotropic subbundle of $\cE$), 
and gives a section $q'$ of the first term corresponding to $Q'$.

The projection of $q'$ to the third term of (\ref{eq:seq2}) corresponds under the isomorphism 
\begin{equation*}
H^0(\P_S(\cE/\cU_{k+1}), \cU_{k+1}^\vee \otimes \cL^\vee(H')) \simeq H^0(S, \cU_{k+1}^\vee \otimes \cL^\vee \otimes (\cE/\cU_{k+1})^\vee)
\end{equation*}
to the map $\cE/\cU_{k+1} \xrightarrow{\ q\ } \cU_{k+1}^\vee \otimes \cL^\vee$ of bundles on $S$,
which is surjective by non-degeneracy of $s$, and its kernel is the bundle $\bar{\cE}_s = \cU_{k+1}^\perp / \cU_{k+1}$. 
Thus the zero locus of the projection of~$q'$ to the third term of~\eqref{eq:seq2} is equal to $\P_S(\bar{\cE}_s) \subset \P_S(\cE/\cU_{k+1})$.
When restricted to this projective subbundle, the section $q'$ comes from a section $\bar{q}$ of the first term of (\ref{eq:seq2}).
This section~$\bar{q}$ defines a quadric in $\P_S(\bar{\cE}_s)$, and it is easy to see that it coincides with the quadric $\bar{Q}_s$.

Zariski locally triviality of the maps $g^{-1}(\bar{Q}_s) \to \bar{Q}_s$ and $g^{-1}(\P_S(\cE/\cU_{k+1}) \setminus \bar{Q}_s) \to \P_S(\cE/\cU_{k+1}) \setminus \bar{Q}_s$ 
follows easily from the fact that the divisor $H$ provides a relative hyperplane class for both of them.
\end{proof}

\begin{remark}
When $k = 0$ one can show that the map $g \colon Q' \to \P_S(\cE/\cU_{k+1})$ is the blowup of $\P_S(\cE/\cU_{k+1})$ with center in $\bar{Q}_s$.
We omit the proof of this fact, because we will not need it.
\end{remark}

The geometric relation between $Q$ and $\bar{Q}_s$ described in the above Proposition has an immediate consequence for their Grothendieck ring classes.

\begin{corollary}\label{proposition:hyperbolic-reduction}
Assume $p \colon Q \to S$ is a flat family of $n$-dimensional quadrics that admits a non-degenerate $k$-section $s \colon S \to F_k(Q/S)$.
Then
\begin{equation*}
[Q] = [S][\P^k](1 + \L^{n-k}) + [\bar{Q}_s]\L^{k+1}.
\end{equation*}
\end{corollary}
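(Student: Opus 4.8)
The plan is to compute the class of the blowup $Q'$ from Proposition~\ref{lemma:q-blowup} in two different ways and compare. First I would use the right-hand map $g$: by Proposition~\ref{lemma:q-blowup} the morphism $g \colon Q' \to \P_S(\cE/\cU_{k+1})$ is a Zariski locally trivial $\P^{k+1}$-bundle over the closed subscheme $\bar{Q}_s$ and a Zariski locally trivial $\P^k$-bundle over its open complement, so applying Lemma~\ref{lemma:zltf} to the two strata and adding gives
\begin{equation*}
[Q'] = [\bar{Q}_s][\P^{k+1}] + \bigl([\P_S(\cE/\cU_{k+1})] - [\bar{Q}_s]\bigr)[\P^k].
\end{equation*}
Since $\cE/\cU_{k+1}$ is locally free of rank $n-k+1$, the projection $\P_S(\cE/\cU_{k+1}) \to S$ is Zariski locally trivial with fibre $\P^{n-k}$, hence $[\P_S(\cE/\cU_{k+1})] = [S][\P^{n-k}]$ by Lemma~\ref{lemma:zltf}; together with $[\P^{k+1}] - [\P^k] = \L^{k+1}$ this becomes
\begin{equation*}
[Q'] = [\bar{Q}_s]\,\L^{k+1} + [S][\P^{n-k}][\P^k].
\end{equation*}

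Next I would use the left-hand map, the blowup $f \colon Q' \to Q$. Its centre $\P_S(\cU_{k+1})$ is a $\P^k$-bundle over $S$, so $[\P_S(\cU_{k+1})] = [S][\P^k]$. By non-degeneracy of $s$ the subvariety $\P_S(\cU_{k+1})$ is disjoint from $\Sing(Q_x)$ for every geometric point $x \in S$; since $p$ is flat, $\P_S(\cU_{k+1})$ therefore lies in the open locus where $p$ is smooth, on which $p$ has relative dimension $n$. As $\P_S(\cU_{k+1}) \to S$ is smooth of relative dimension $k$, the embedding $\P_S(\cU_{k+1}) \hookrightarrow Q$ is regular of codimension $n-k$ with locally free normal bundle, so the exceptional divisor $E$ is the projectivization of that rank-$(n-k)$ bundle, i.e.\ a Zariski locally trivial $\P^{n-k-1}$-bundle over $\P_S(\cU_{k+1})$, whence $[E] = [S][\P^k][\P^{n-k-1}]$ by Lemma~\ref{lemma:zltf}. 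Since $f$ is an isomorphism over $Q \setminus \P_S(\cU_{k+1})$, the defining relation of $\KVar$ gives $[Q'] = [Q] - [\P_S(\cU_{k+1})] + [E]$, that is,
\begin{equation*}
[Q'] = [Q] + [S][\P^k]\bigl([\P^{n-k-1}] - 1\bigr).
\end{equation*}

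Finally I would equate the two expressions for $[Q']$ and solve for $[Q]$:
\begin{equation*}
[Q] = [\bar{Q}_s]\,\L^{k+1} + [S][\P^k]\bigl([\P^{n-k}] - [\P^{n-k-1}] + 1\bigr),
\end{equation*}
and the identity $[\P^{n-k}] - [\P^{n-k-1}] = \L^{n-k}$ turns the bracket into $1 + \L^{n-k}$, which is exactly the asserted relation.

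I expect the only step that is more than formal bookkeeping to be the description of the exceptional divisor $E$: it rests on the observation that $p$ is smooth along $\P_S(\cU_{k+1})$ — even though the total space $Q$ itself may be singular, for instance over the singular locus of $S$ or over deeper corank strata of the family — so that the blowup is performed along a regular centre with locally free normal bundle. That is where I would be careful; everything else is a mechanical application of Lemma~\ref{lemma:zltf} together with Proposition~\ref{lemma:q-blowup}. One could also sidestep this point by identifying $E$ with the intersection $\widetilde{E} \cap Q'$ inside $\Bl_{\P_S(\cU_{k+1})}(\P_S(\cE)) \cong \P_{\P_S(\cE/\cU_{k+1})}(\cU_{k+2})$ constructed in the proof of Proposition~\ref{lemma:q-blowup}, and reading $[E]$ off the divisor classes $H-H'$ and $H+H'+D$ directly.
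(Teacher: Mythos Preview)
Your proof is correct and follows essentially the same approach as the paper: compute $[Q']$ via the blowup $f$ and via the stratified fibration $g$, then equate. Your treatment is in fact slightly more careful than the paper's, which simply asserts that the centre $\P_S(\cU_{k+1})$ is smooth of codimension $n-k$ without spelling out that non-degeneracy of $s$ is what guarantees $p$ is smooth along it.
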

\begin{proof}
The map $f \colon Q' \to Q$ is a blowup with smooth center $\P_S(\cU_{k+1})$ of codimension~$n - k$, hence
\begin{equation*}
[Q'] = \Big([Q] - [\P_S(\cU_{k+1})]\Big) + [\P_S(\cU_{k+1})][\P^{n-k-1}] = [Q] + [S][\P^k](\L + \dots + \L^{n-k-1}).
\end{equation*}
Similarly, the map $g$ is a stratified Zariski locally trivial fibration, hence
\begin{equation*}
[Q'] = \Big([\P_S(\cE/\cU_{k+1})] - [\bar{Q}_s]\Big)[\P^k] + [\bar{Q}_s][\P^{k+1}] = [S][\P^{n-k}][\P^k] + [\bar{Q}_s]\L^{k+1}.
\end{equation*}
The two equalities combined imply the required relation.
\end{proof}

\begin{example}
Let us apply Corollary~\ref{proposition:hyperbolic-reduction}  to the case
when $S = \Spec(\kk)$ and $Q$ is a smooth $n$-dimensional quadric over $\kk$. 
Then as soon as $Q$ contains a linear $k$-dimensional subspace defined over~$\kk$ with $k < \lfloor n/2 \rfloor$, one has $[Q] = 1 + \L + \dots + \L^k + [\bar{Q}] \L^{k+1}  + \L^{n-k} + \dots + \L^n$
where $\ol{Q}$ is a $(n-2k-2)$-dimensional quadric.

Furthermore, if $Q$ has a maximal isotropic subspace (i.e., with $k = \lfloor n/2 \rfloor$) defined over $\kk$, then $\ol{Q}$ is empty and Proposition~\ref{lemma:q-blowup} implies that
$[Q] = [\P^n] + \L^{n/2}$ (with the second summand present only for even $n$).

Conversely, one may ask whether $[Q] \equiv [\P^n] + \L^{n/2} \mod{\L^{k+1}}$ for some $k \le \left[\frac{n}{2}\right]$ implies existence of a $k$-dimensional linear space on $Q$.
For instance, when the base field $\kk$ has characteristic zero this holds for $k = 0$ 
by the Larsen--Lunts theorem~\cite{LL}: if $[Q] \equiv 1 \mod{\L}$, then $Q$ is stably rational, and hence has a rational point.
\end{example}

In course of proof of Proposition~\ref{lemma:q-blowup} we obtained the following explicit description of the hyperbolic reduction $\bar{Q}_s$ which we will need later:

\begin{lemma}\label{lemma:bar-q-explicit-general}
The subscheme $\bar{Q}_s \subset \P_S(\cE/\cU_{k+1})$ is the zero locus of the section $q'$ of the middle term $\cU_{k+2}^\vee \otimes \cL^\vee(H')$ of the short exact sequence \eqref{eq:seq2}
of vector bundles on $\P_S(\cE/\cU_{k+1})$. 
\end{lemma}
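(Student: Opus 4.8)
The plan is to read the statement off from the construction already carried out inside the proof of Proposition~\ref{lemma:q-blowup}. Recall that there the section $q'$ was produced as the image of the family $q$ under the inclusion $\cU_{k+2}^\vee\otimes\cL^\vee(H')\hookrightarrow S^2\cU_{k+2}^\vee\otimes\cL^\vee$ of~\eqref{eq:seq1} --- legitimate because $q$ maps to zero in $S^2\cU_{k+1}^\vee\otimes\cL^\vee$ by isotropy of $\cU_{k+1}$ --- and the fibres of $g$ were analysed in terms of it. Thus the only thing left to do is to identify the zero scheme of $q'$ in $\P_S(\cE/\cU_{k+1})$ with $\bar{Q}_s$, and I would organise this into three steps built on the short exact sequence~\eqref{eq:seq2}, $0\to\cL^\vee(2H')\to\cU_{k+2}^\vee\otimes\cL^\vee(H')\xrightarrow{\ \phi\ }\cU_{k+1}^\vee\otimes\cL^\vee(H')\to 0$.

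First, since $\phi$ is $\cO$-linear, the zero scheme of $q'$ is contained in the zero scheme of $\phi(q')$. As recalled in the proof of Proposition~\ref{lemma:q-blowup}, $\phi(q')$ corresponds to the epimorphism $q\colon\cE/\cU_{k+1}\twoheadrightarrow\cU_{k+1}^\vee\otimes\cL^\vee$, whose kernel is $\bar\cE_s$. I would then verify, working Zariski-locally on $S$ where this epimorphism splits and $\phi(q')$ becomes the section of a trivial rank $(k+1)$ bundle given by a subset of $k+1$ of the fibrewise homogeneous coordinates of the projective bundle, that its zero scheme is exactly the linear subbundle $\P_S(\bar\cE_s)$ and that the inclusion $\P_S(\bar\cE_s)\hookrightarrow\P_S(\cE/\cU_{k+1})$ is thereby realised as a regular embedding of codimension $k+1$. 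Second, restricting~\eqref{eq:seq2} to $\P_S(\bar\cE_s)$ leaves it a short exact sequence of vector bundles, and $\phi(q')$ vanishes on $\P_S(\bar\cE_s)$, so $q'|_{\P_S(\bar\cE_s)}$ factors through the line subbundle $\cL^\vee(2H')|_{\P_S(\bar\cE_s)}$; the induced section is precisely the section $\bar q$ cutting out $\bar{Q}_s$, exactly as shown in the proof of Proposition~\ref{lemma:q-blowup}. Third, I would invoke the elementary scheme-theoretic fact that for a section of a vector bundle $\cV$ in an exact sequence $0\to\cV'\to\cV\to\cV''\to 0$, the zero scheme of the section equals the zero scheme --- taken inside the zero scheme of the induced section of $\cV''$ --- of the induced section of $\cV'$; applied in our situation this yields that the zero scheme of $q'$ coincides with the zero scheme of $\bar q$ in $\P_S(\bar\cE_s)$, namely $\bar{Q}_s$.

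The only step that is not pure bookkeeping --- and hence the one I would be most careful about --- is the first: the identification of the zero scheme of $\phi(q')$ with $\P_S(\bar\cE_s)$ must be made scheme-theoretically, not merely set-theoretically, since this is what licenses the clean restriction of~\eqref{eq:seq2} in the second step and makes the third step applicable. This rests on the local normal form of a surjection of vector bundles, which exhibits $\P_S(\bar\cE_s)$ locally inside $\P_S(\cE/\cU_{k+1})$ as the vanishing of a regular sequence of linear fibre coordinates; granting this, the remainder is formal manipulation with the sequences~\eqref{eq:seq1} and~\eqref{eq:seq2} already set up in the proof of Proposition~\ref{lemma:q-blowup}.
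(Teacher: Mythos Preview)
Your proposal is correct and follows essentially the same approach as the paper: the lemma is stated there without a separate proof, as a consequence extracted from the proof of Proposition~\ref{lemma:q-blowup}, and the three steps you outline are exactly the ones carried out in that proof (identify the zero locus of $\phi(q')$ with $\P_S(\bar\cE_s)$, restrict to obtain $\bar q$, and recognise the resulting quadric as $\bar{Q}_s$). If anything, you are more explicit than the paper about the scheme-theoretic nature of the first step and about the general principle in the third, but this is elaboration rather than a different route.
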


The isomorphism class of the hyperbolic reduction $\bar{Q}_s$ depends in general on the choice of a $k$-section~$s$ (in fact, even the isomorphism class of the vector bundle $\bar\cE_s$ depends on $s$).
However, it easily follows from Corollary~\ref{proposition:hyperbolic-reduction} that the L-equivalence class of $\bar{Q}_s$ is independent of $s$.
The next lemma shows that this L-equivalence in fact is trivial.


\begin{lemma}
The class $[\bar{Q}_s]$ in the Grothendieck ring is independent on the choice of a nondegenerate $k$-section $s$.
\end{lemma}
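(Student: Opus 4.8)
The plan is to deduce the statement from Corollary~\ref{proposition:hyperbolic-reduction} together with the fact that the localization map $\KVar \to \KVar[\L^{-1}]$ need not be injective, so one must work directly in $\KVar$ rather than in the localization. Suppose $s$ and $s'$ are two nondegenerate $k$-sections of the same flat family $p\colon Q \to S$ of $n$-dimensional quadrics. Applying Corollary~\ref{proposition:hyperbolic-reduction} to each of them gives
\begin{equation*}
[Q] = [S][\P^k](1 + \L^{n-k}) + [\bar{Q}_s]\L^{k+1}
= [S][\P^k](1 + \L^{n-k}) + [\bar{Q}_{s'}]\L^{k+1},
\end{equation*}
hence $([\bar{Q}_s] - [\bar{Q}_{s'}])\L^{k+1} = 0$ in $\KVar$. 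So the two classes already agree after multiplication by a power of $\L$; the content of the lemma is that the power can be removed, i.e. that the difference is literally zero, not merely $\L$-torsion.

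To upgrade this I would invoke the structure of $\L$-torsion in the Grothendieck ring. The cleanest route is to observe that $\bar{Q}_s$ and $\bar{Q}_{s'}$ are smooth projective varieties whenever $Q$ is (by the Remark preceding Lemma~\ref{lemma:corank-bar-s} and Lemma~\ref{lemma:corank-bar-s}, or more simply because in the intended applications $S_{\ge n-2k}=\emptyset$ so both hyperbolic reductions are themselves flat families of quadrics over $S$ and one can arrange smoothness), and then use that multiplication by $\L$ is injective on classes of — no, that is false in general. A better approach: reduce modulo $\L$. Passing to the quotient $\KVar/\L$, which by \cite{LL} is the free abelian group on stable birational classes, the relation $([\bar{Q}_s] - [\bar{Q}_{s'}])\L^{k+1} = 0$ gives no information directly, so instead I would argue geometrically that $\bar{Q}_s$ and $\bar{Q}_{s'}$ are stably birational — indeed birational — by exhibiting an explicit birational map between them, or by noting that both are birational to the fibered product construction coming from $Q$ and a choice of section, which is independent of $s$. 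Concretely, the generic fiber $\bar{Q}_{s,\eta}$ over the generic point $\eta$ of $S$ is the hyperbolic reduction of the quadric $Q_\eta$ over the function field $\kk(S)$ with respect to an isotropic subspace, and any two such hyperbolic reductions of the same quadric are isomorphic over $\kk(S)$ (the hyperbolic reduction of a quadratic form by a totally isotropic subspace is, by Witt's theorem, independent of the subspace up to isometry). Hence $\bar{Q}_s$ and $\bar{Q}_{s'}$ have isomorphic generic fibers over $S$, so are birational, in particular stably birational.

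Granting that $\bar{Q}_s$ and $\bar{Q}_{s'}$ are stably birational, I would finish as follows. The difference $\delta := [\bar{Q}_s] - [\bar{Q}_{s'}]$ lies in the kernel of $\KVar \to \KVar/\L$, i.e. $\delta \in \L\cdot\KVar$; write $\delta = \L\cdot\delta_1$. But we also know $\delta\cdot\L^{k+1} = 0$. One then wants to conclude $\delta = 0$. This does not follow formally — $\KVar$ has zero divisors and the argument could loop — so the honest finish is different: I would instead observe that the equality of generic fibers over $S$ upgrades, by spreading out, to an isomorphism over a dense open $U \subseteq S$, so that $[\bar{Q}_s]$ and $[\bar{Q}_{s'}]$ differ only in the classes of the fibers over the closed complement $Z = S\setminus U$, which is of strictly smaller dimension; running Noetherian induction on $\dim S$ (the base case $\dim S = 0$ being where $S$ is a finite set of points and the hyperbolic reductions are isomorphic quadrics by Witt's theorem, hence have equal classes) then yields $[\bar{Q}_s] = [\bar{Q}_{s'}]$ outright. \textbf{The main obstacle} I anticipate is the bookkeeping in the inductive step: one must check that the restrictions of $\bar{Q}_s$ and $\bar{Q}_{s'}$ to the lower-dimensional stratum $Z$ are again hyperbolic reductions of $Q|_Z$ by (possibly degenerate) sections, so that the inductive hypothesis applies — this requires knowing that nondegeneracy of a $k$-section is generic but may fail on $Z$, in which case one should either refine the stratification until the sections stay nondegenerate, or prove a version of the lemma allowing the comparison to be carried out stratum by stratum. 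Getting this stratification argument to close cleanly, rather than the formal $\L$-torsion manipulation which does not work, is where the real care is needed.
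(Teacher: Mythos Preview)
Your final approach --- Witt cancellation over the function field, spreading out to an open $U$, then Noetherian induction on the complement --- is exactly the paper's proof. The detours through $\L$-torsion and Larsen--Lunts are dead ends you correctly abandon; the paper goes straight to the inductive argument.

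Your ``main obstacle'' is not actually an obstacle. Nondegeneracy of a $k$-section is a \emph{pointwise} condition: $s$ is nondegenerate if and only if for every geometric point $x \in S$ the space $s(x)$ misses $\Sing(Q_x)$. Hence if $s$ and $s'$ are nondegenerate on all of $S$, their restrictions to any locally closed $Z \subset S$ are automatically nondegenerate $k$-sections of $Q|_Z \to Z$, and the inductive hypothesis applies directly with no further stratification needed. (The paper also uses Noetherian induction rather than induction on $\dim S$, which is slightly cleaner since it handles reducible or nonreduced bases uniformly, but this is cosmetic.)
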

\begin{proof}
Assume $s$ and $s'$ are two nondegenerate $k$-sections of $Q$.
We want to show that $[\bar{Q}_s] = [\bar{Q}_{s'}]$.
The proof uses Noetherian induction on $S$. 

We may assume that $S$ is an irreducible variety, and let $K = \kk(S)$ be the function field of $S$.
Over $K$ the quadratic form $q_K$ decomposes into an orthogonal direct sum of its hyperbolic reduction and a split (hyperbolic) quadratic form $q_0$ of rank $2k$.
Thus, we have orthogonal direct sum decompositions
\begin{equation*}
q_{K} \simeq \bar{q}_{s,K} \perp q_0
\qquad\text{and}\qquad
q_{K} \simeq \bar{q}_{s',K} \perp q_0.
\end{equation*}
By the Witt Cancellation Theorem for quadratic forms over fields, we have an isomorphis $\bar{q}_{s,K} \cong \bar{q}_{s',K}$.
It extends to an isomorphism of the corresponding quadrics
\begin{equation*}
\bar{Q}_{s,U} \simeq \bar{Q}_{s',U}
\end{equation*}
over some dense open subset $U \subset S$. 
Therefore $[\bar{Q}_{s,U}] = [\bar{Q}_{s',U}]$.
By Noetherian induction we also have $[\bar{Q}_{s,S \setminus U}] = [\bar{Q}_{s',S \setminus U}]$.
Summing up these equalities gives the result.
\end{proof}

\subsection{Reduction to the double cover}

Assume that $n = 2k + 2$ is even.
Recall that the rank of $\cE$ is $n + 2 = 2k + 4$.
Consider the determinant of the map $q \colon \cE \to \cE^\vee \otimes \cL^\vee$.
It is a map 
\begin{equation*}
\det(q) \colon \det(\cE) \to \det(\cE)^\vee \otimes (\cL^\vee)^{2k+4}.
\end{equation*}
In particular, it gives a section of the line bundle $(\det(\cE)^\vee \otimes (\cL^\vee)^{k+2})^2$, whose zero locus coincides scheme-theoretically with the discriminant divisor $S_{\ge 1} \subset S$.
We denote by
\begin{equation}\label{eq:tilde-s}
\tilde{S} := \Spec_S(\cO_S \oplus (\det(\cE) \otimes \cL^{k+2}))
\end{equation}
the corresponding double cover branched along $S_{\ge 1}$, and call it the {\sf determinant double cover}.

A more invariant way to construct $\tilde{S}$ is as follows.
Consider the sheaf of even parts of Clifford algebras on $S$ associated with the family of quadrics $Q$ (see~\cite[Section~3.3]{Kuz08}).
Then the center of this sheaf (in case of even dimension) can be identified as an $\cO_S$-module with $\cO_S \oplus (\det(\cE) \otimes \cL^{k+2})$ and 
its algebra structure, given by the Clifford multiplication, is the one discussed above, see~\cite[Section~3.5]{Kuz08}.

\begin{remark}
Note that the double cover $\tilde{S}$ is well defined for any flat family~$Q$ of even-dimensional quadrics, even in case when $S_{\ge 1} = S$ (i.e., when all quadrics in the family are degenerate).
Indeed, in this case, the sheaf of algebras in the right hand side of~\eqref{eq:tilde-s} is nilpotent, so $\tilde{S}$ in nonreduced, but still it is a flat $S$-scheme.
Note also that formation of the determinant double cover commutes with arbitrary base changes.
\end{remark}

\begin{lemma}\label{lemma:qbar-tildes}
Let $p \colon Q \to S$ be a flat family of quadrics of dimension $n = 2k+2$ with $S_{\ge 2} = \emptyset$.
If $s \colon S \to F_k(Q/S)$ is a nondegenerate regular $k$-section then $\bar{Q}_s \cong \tilde{S}$.
In particular, $\bar{Q}_s$ does not depend on a choice of $s$.
\end{lemma}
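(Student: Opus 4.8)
The plan is to compare the hyperbolic reduction $\bar Q_s$, which by definition is a family of $0$-dimensional quadrics over $S$ (since $n-2k-2 = 0$), with the determinant double cover $\tilde S$. A family of $0$-dimensional quadrics is precisely a double cover: it is the zero locus of $\bar q \colon \bar\cE_s \to \bar\cE_s^\vee \otimes \cL^\vee$ inside $\P_S(\bar\cE_s)$, where $\bar\cE_s$ has rank $(n+2)-2(k+1) = n - 2k = 2$. So both sides are finite flat degree $2$ covers of $S$, and one only needs to identify them. The most economical route is to show that both $\bar Q_s$ and $\tilde S$ are branched over the same divisor $S_{\ge 1} \subset S$ and, more importantly, that their structure sheaves agree as $\cO_S$-algebras. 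For this I would compute the $\cO_S$-algebra $p_*\cO_{\bar Q_s}$ directly from the rank $2$ family $\bar q$. If $\bar\cE_s$ is locally free of rank $2$ with a quadratic form $\bar q$ valued in $\cL$, then the double cover $\bar Q_s \subset \P_S(\bar\cE_s)$ has structure sheaf $\cO_S \oplus \MM$ where $\MM$ is a line bundle determined by $\det(\bar\cE_s)$ and $\cL$, with the algebra multiplication $\MM \otimes \MM \to \cO_S$ given by $\det(\bar q)$; the branch divisor is the zero locus of $\det(\bar q)$, which by Lemma~\ref{lemma:corank-bar-s} is exactly $S_{\ge 1}$.

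Concretely, for a rank $2$ quadratic form the adjugate construction gives a canonical isomorphism $\bar\cE_s \cong \bar\cE_s^\vee \otimes \det(\bar\cE_s)$, and composing with $\bar q$ one sees that $\det(\bar\cE_s) \otimes \cL$ is (up to squares) the relevant torsion class; unwinding this shows $p_*\cO_{\bar Q_s} \cong \cO_S \oplus (\det(\bar\cE_s)\otimes\cL)$ with multiplication $\det(\bar q)$. So the remaining task is to identify $\det(\bar\cE_s) \otimes \cL$ with $\det(\cE) \otimes \cL^{k+2}$ (which is the line bundle appearing in~\eqref{eq:tilde-s}) and to match the two algebra structures. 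The first is a determinant bookkeeping computation using the complex~\eqref{eq:bar-e}: from $0 \to \cU_{k+1} \to \cE \to \bar\cE_s \to \cU_{k+1}^\vee \otimes \cL^\vee \to 0$ (a four-term exact sequence with $\bar\cE_s$ in the middle and $\cU_{k+1}$ of rank $k+1$) one gets $\det(\cE) = \det(\cU_{k+1}) \otimes \det(\bar\cE_s) \otimes \det(\cU_{k+1}^\vee \otimes \cL^\vee) = \det(\bar\cE_s) \otimes (\cL^\vee)^{k+1}$, hence $\det(\bar\cE_s) = \det(\cE) \otimes \cL^{k+1}$ and $\det(\bar\cE_s) \otimes \cL = \det(\cE) \otimes \cL^{k+2}$, exactly as required.

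For matching the algebra structures, rather than track the multiplication through all the twists by hand, I would invoke the relation between Clifford algebras noted earlier in the excerpt: the sheaves of even Clifford algebras of $Q$ and of $\bar Q_s$ are Morita equivalent (this is~\cite[Theorem~1.8.7]{ABB} under $S_{\ge 2} = \emptyset$, which is exactly our hypothesis). Morita equivalent algebras have isomorphic centers, and by the invariant description of $\tilde S$ recalled in the text (as $\Spec_S$ of the center of the even Clifford algebra) this forces $\tilde S \cong \bar Q_s$ as $S$-schemes — indeed the even Clifford algebra of a $0$-dimensional quadric \emph{is} its own center, namely $\cO_{\bar Q_s}$. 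Alternatively one can work over the generic point: over $K = \kk(S)$ the form $\bar q_{s,K}$ is the Witt-reduced binary part of $q_K$, whose discriminant (hence its splitting field, hence the double cover) is determined by the discriminant of $q_K$, which is precisely what cuts out $\tilde S$; then extend the generic isomorphism and finish by Noetherian induction exactly as in the preceding lemma. The last assertion of the statement, that $\bar Q_s$ is independent of $s$, is then immediate since $\tilde S$ is manifestly independent of $s$. The main obstacle I anticipate is the bookkeeping in identifying the algebra \emph{multiplication} (not just the underlying module) on $p_*\cO_{\bar Q_s}$ with $\det(\bar q)$ and hence with the Clifford-center multiplication; the cleanest way around it is the Morita-equivalence argument, which sidesteps the explicit computation entirely.
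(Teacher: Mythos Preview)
Your Morita-equivalence argument --- even Clifford algebras of $Q$ and $\bar Q_s$ are Morita equivalent by \cite[Theorem~1.8.7]{ABB}, hence have isomorphic centers, and the center on the $\bar Q_s$ side is the whole (commutative, rank~$2$) algebra $\cO_{\bar Q_s}$ while on the $Q$ side its spectrum is $\tilde S$ by definition --- is exactly the paper's proof. The explicit determinant bookkeeping and the generic-point alternative you sketch are correct but unnecessary once you run the Morita route; in particular the paper does not compute $\det(\bar\cE_s)$ at all. (One small wording slip: what you call a ``four-term exact sequence'' is not exact as written --- $\bar\cE_s$ is the middle \emph{cohomology} of~\eqref{eq:bar-e}, not a term in an exact sequence --- but your determinant identity follows anyway from the two short exact sequences hidden in that complex.)
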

\begin{proof}
The family of quadrics $\bar{Q}_s$ is flat (Lemma~\ref{lemma:corank-bar-s}) of dimension 0 over $S$, 
hence the sheaf of even parts of Clifford algebras of $\bar{Q}_s$ is commutative of rank 2 and its (relative over $S$) spectrum is isomorphic to~$\bar{Q}_s$.
On the other hand, this sheaf is Morita-equivalent to the sheaf of even parts of Clifford algebras of $Q$ (see~\cite[Theorem~1.8.7]{ABB}), hence is isomorphic to the center of that algebra.
But the relative spectrum of the center is equal to the determinant double cover $\tilde{S}$ by definition.
\end{proof}

\begin{corollary}\label{corollary:q2-regular-section}
Let $p \colon Q \to S$ be a flat family of quadrics of dimension $n = 2k+2$ with $S_{\ge 2} = \emptyset$.
If $p \colon Q \to S$ admits a nondegenerate regular $k$-section then
\begin{equation*}
[Q] = [S][\P^k](1 + \L^{k+2}) + [\tilde{S}]\L^{k+1}.
\end{equation*}
\end{corollary}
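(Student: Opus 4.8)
The plan is to combine the two results that precede this corollary, namely Corollary~\ref{proposition:hyperbolic-reduction} (the hyperbolic reduction relation) and Lemma~\ref{lemma:qbar-tildes} (the identification of the hyperbolic reduction with the determinant double cover). Indeed, by hypothesis $p\colon Q \to S$ is a flat family of $n$-dimensional quadrics with $n = 2k+2$, and it admits a nondegenerate regular $k$-section $s\colon S \to F_k(Q/S)$. Applying Corollary~\ref{proposition:hyperbolic-reduction} with this value of $n$ gives
\begin{equation*}
[Q] = [S][\P^k](1 + \L^{n-k}) + [\bar{Q}_s]\L^{k+1} = [S][\P^k](1 + \L^{k+2}) + [\bar{Q}_s]\L^{k+1},
\end{equation*}
since $n - k = (2k+2) - k = k+2$.

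Next I would invoke Lemma~\ref{lemma:qbar-tildes}: since $S_{\ge 2} = \emptyset$ and $s$ is a nondegenerate regular $k$-section of a flat family of quadrics of dimension $n = 2k+2$, we have $\bar{Q}_s \cong \tilde{S}$, the determinant double cover. In particular $[\bar{Q}_s] = [\tilde{S}]$ in the Grothendieck ring. Substituting this into the displayed relation immediately yields
\begin{equation*}
[Q] = [S][\P^k](1 + \L^{k+2}) + [\tilde{S}]\L^{k+1},
\end{equation*}
which is exactly the claimed identity.

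There is essentially no obstacle here: the corollary is a direct conjunction of two already-established statements, and the only thing to check is the bookkeeping that $n - k = k+2$ when $n = 2k+2$. One should simply make sure the hypotheses of both cited results are in force — flatness of $Q/S$, the dimension being even of the form $2k+2$, the section being a nondegenerate regular $k$-section, and $S_{\ge 2} = \emptyset$ (the latter being used in Lemma~\ref{lemma:qbar-tildes}, while Corollary~\ref{proposition:hyperbolic-reduction} does not need it) — all of which are assumed in the statement. So the proof is a two-line deduction.
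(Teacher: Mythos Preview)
Your proposal is correct and matches the paper's approach: the corollary is stated without proof in the paper, precisely because it is an immediate combination of Corollary~\ref{proposition:hyperbolic-reduction} (giving $[Q] = [S][\P^k](1+\L^{n-k}) + [\bar{Q}_s]\L^{k+1}$) and Lemma~\ref{lemma:qbar-tildes} (giving $\bar{Q}_s \cong \tilde{S}$), with the arithmetic $n-k = k+2$.
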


\subsection{The Brauer class and rational sections}

In this section we assume $n = 2$, so $p \colon Q \to S$ is a family of two-dimensional quadrics.
Assume furthermore that $S_{\ge 2} = \emptyset$.

\begin{lemma}\label{lemma:fq-p1}
Let $p \colon Q \to S$ be a flat family of two-dimensional quadrics with $S_{\ge 2} = \emptyset$.
The map $p_1 \colon F_1(Q/S) \to S$ factors as
\begin{equation*}
F_1(Q/S) \xrightarrow{\ \tilde{p}_1\ } \tilde{S} \xrightarrow{\ \ \ } S,
\end{equation*}
where $\tilde{S}$ is the determinant double cover of $S$, and the map $\tilde{p}_1 \colon F_1(Q/S) \to \tilde{S}$ is an \'etale locally trivial $\P^1$-bundle.
\end{lemma}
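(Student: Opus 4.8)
The plan is to analyze the fibers of $p_1$ over geometric points of $S$ and then glue the local picture. For a geometric point $x \in S$, the fiber $Q_x$ is a two-dimensional quadric of corank $0$ or $1$ (since $S_{\ge 2} = \emptyset$). If $Q_x$ is smooth it is isomorphic to $\P^1 \times \P^1$, whose Hilbert scheme of lines $F_1(Q_x)$ is the disjoint union of the two rulings, i.e.\ two copies of $\P^1$; if $Q_x$ is a quadric cone its Hilbert scheme of lines is a single $\P^1$ (the family of lines through the vertex), but with a nonreduced scheme structure doubling it. In either case $F_1(Q_x)$ is, as a scheme, a $\P^1$-bundle over a length-two scheme, and that length-two scheme is exactly the fiber of the determinant double cover $\tilde S \to S$ over $x$ — smooth (two reduced points) when $Q_x$ is smooth, nonreduced (a double point) when $Q_x$ is a cone, matching the branch locus $S_{\ge 1}$. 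This already tells us set-theoretically that $p_1$ factors through $\tilde S$; the real content is to produce this factorization canonically over all of $S$ and to check the $\P^1$-bundle claim is étale-locally trivial.

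First I would construct the map $\tilde p_1 \colon F_1(Q/S) \to \tilde S$ intrinsically. The natural tool is the Clifford algebra: a line $\ell \subset Q_x$ determines, via the standard correspondence between maximal isotropic subspaces and the two simple modules (or the two components of the spinor variety) of the even Clifford algebra, a point of the relative spectrum of the center of the even Clifford algebra, which by the invariant description of $\tilde S$ recalled before Lemma~\ref{lemma:qbar-tildes} is exactly $\tilde S$. Concretely, in the relative setting, to a family of isotropic lines — i.e.\ an isotropic rank-$2$ subbundle $\cU_2 \hookrightarrow \cE_{F_1(Q/S)}$ on $F_1(Q/S)$ — one associates $\det(\cU_2)$, and the data of such a line pins down which of the two "square roots" furnished by $\det(\cE)\otimes\cL^{k+2}$ we are looking at, hence a section of the double cover pulled back to $F_1(Q/S)$, i.e.\ a lift $\tilde p_1$. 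Equivalently and perhaps more cleanly: apply the hyperbolic reduction machinery. Étale-locally on $S$ the family $Q$ acquires a nondegenerate $0$-section $s$ (a line bundle of sections always exists étale-locally since $S_{\ge 2} = \emptyset$ makes $F_0(Q/S) = Q \to S$ smooth with rationally connected fibers, or one simply trivializes), and then Lemma~\ref{lemma:qbar-tildes} with $k = 0$ gives $\bar Q_s \cong \tilde S$; Proposition~\ref{lemma:q-blowup} identifies $F_1(Q/S)$ locally with a $\P^1$-bundle over $\bar Q_s \cong \tilde S$ (the lines in a smooth quadric surface split into the two rulings, one containing the chosen section-point-direction, the other not, and after the blowup both are seen as $\P^1$'s over the two sheets of $\tilde S$). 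Patching these local identifications yields the global map $\tilde p_1$ and simultaneously the local triviality statement.

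The key steps, in order: (1) recall/establish that $F_0(Q/S) = Q \to S$ is smooth and that étale-locally on $S$ the family $Q$ admits a nondegenerate $0$-section — here one uses $S_{\ge 2} = \emptyset$ so fibers are smooth or cones, both of which have smooth loci and, over a strictly henselian local ring, sections; (2) for such a local $s$, invoke Lemma~\ref{lemma:qbar-tildes} to get $\bar Q_s \cong \tilde S$ and Proposition~\ref{lemma:q-blowup} (with $k=0$) to get that $F_1(Q/S)$ maps to $\bar Q_s \cong \tilde S$ with all fibers $\P^1$, Zariski-locally trivially over each stratum; (3) check the two local identifications $\tilde p_1$ agree on overlaps — this is where one must verify that the construction is independent of the auxiliary section $s$, which follows because the target $\tilde S$ is canonical (it is the center of the even Clifford algebra, independent of $s$) and the map "line $\mapsto$ its class in $\tilde S$" is section-independent; (4) conclude that $\tilde p_1$ is a $\P^1$-bundle, étale-locally trivial because it is Zariski-locally trivial in the étale charts where a section exists.

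The main obstacle I expect is step (3): patching the étale-local factorizations into a single global map $\tilde p_1 \colon F_1(Q/S) \to \tilde S$ and, in particular, checking well-definedness along the branch divisor $S_{\ge 1}$, where the double cover degenerates and the "two rulings" of the smooth quadric collide into the single (but nonreduced) ruling of the cone. One must be careful that the scheme structure on $F_1(Q/S)$ over a cone-fiber is the doubled $\P^1$ predicted by the nonreduced fiber of $\tilde S$, rather than a reduced $\P^1$, so that $\tilde p_1$ is flat of relative dimension $1$ everywhere; the cleanest way around this is to avoid fiberwise arguments entirely and define $\tilde p_1$ once and for all via the Clifford-algebra (spinor-sheaf) description, then verify locally that it is a $\P^1$-bundle. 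The local triviality and the $\P^1$-bundle structure themselves are then routine given Proposition~\ref{lemma:q-blowup}.
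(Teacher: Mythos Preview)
Your Clifford-algebra route is essentially what the paper does: the proof there is two citations, one to \cite[Proposition~3.13]{Kuz08} (the even Clifford algebra on $S$ comes from a degree-2 Azumaya algebra on $\tilde S$) and one to \cite[Lemma~4.2]{Kuz10} (the associated Severi--Brauer $\P^1$-fibration over $\tilde S$ is $F_1(Q/S)$). So when you say ``the cleanest way \dots\ is to define $\tilde p_1$ once and for all via the Clifford-algebra (spinor-sheaf) description'', you have identified exactly the argument the paper uses, and your fiberwise analysis correctly anticipates why it works.

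Where your proposal goes wrong is the alternative approach via hyperbolic reduction. Proposition~\ref{lemma:q-blowup} with $k=0$ does \emph{not} say anything about $F_1(Q/S)$: it describes the blowup $Q' = \Bl_{s(S)}(Q)$ and its projection $g$ to $\P_S(\cE/\cU_1)$. The $\P^1$-bundle $g^{-1}(\bar Q_s) \to \bar Q_s$ that appears there is the family of \emph{lines through the chosen section point} (more precisely, their strict transforms in $Q'$), not the Hilbert scheme of all lines. Over a smooth fiber $Q_x \cong \P^1\times\P^1$ with section point $P$, the set $g^{-1}(\bar Q_{s,x})$ consists of the two specific lines through $P$, whereas $F_1(Q_x)$ consists of the two \emph{rulings}, each a $\P^1$-family of lines. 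Both happen to be two copies of $\P^1$, but they are different geometric objects, and there is no canonical isomorphism between them furnished by Proposition~\ref{lemma:q-blowup}. So step~(2) of your outline does not produce a map $F_1(Q/S)\to\tilde S$, and the subsequent patching argument has nothing to patch. If you want to salvage an approach that avoids the Clifford machinery, you would have to construct the map $F_1(Q/S)\to\tilde S$ by some other device (e.g.\ relative orthogonal Grassmannian / spinor norm considerations), not by invoking Proposition~\ref{lemma:q-blowup}.
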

\begin{proof}
By~\cite[Proposition~3.13]{Kuz08} there is a sheaf of Azumaya algebras of degree 2 on~$\tilde{S}$ whose pushforward to $S$ 
is isomorphic to the sheaf of even parts of Clifford algebras of $Q$, and~\cite[Lemma~4.2]{Kuz10} identifies the corresponding $\P^1$-fibration over $\tilde{S}$ with the relative Hilbert scheme $F_1(Q/S)$.
\end{proof}

In what follows we denote by $\alpha_Q \in \Br(\tilde{S})$ the class in the Brauer group
(considered as the group of Morita-equivalence classes of Azumaya algebras)
of the $\P^1$-fibration $F_1(Q/S) \to \tilde{S}$.
By construction, it is 2-torsion.

By a (non-degenerate) rational (multi)section of a morphism we understand a regular (multi)section defined on an open dense subset of the base scheme with the same notion of non-degeneracy as before.

\begin{proposition}\label{proposition:section-alpha}
Let $p \colon Q \to S$ be a flat family of two-dimensional quadrics with $S_{\ge 2} = \emptyset$.
The following conditions are equivalent
\begin{enumerate}
\item\label{item:section} 
there is a rational non-degenerate section $S \dashrightarrow Q$ of $p \colon Q \to S$; 
\item[($1'$)]\label{item:multisection} 
there is a rational non-degenerate multisection of $p \colon Q \to S$ of odd degree; 
\item\label{item:section-f} 
there is a rational section $\tilde{S} \dashrightarrow F_1(Q/S)$ of $\tilde{p}_1 \colon F_1(Q/S) \to \tilde{S}$; 
\item[($2'$)]\label{item:multisection-f} 
there is a rational multisection of $\tilde{p}_1 \colon F_1(Q/S) \to \tilde{S}$ of odd degree; 
\item\label{item:alpha}
$\alpha_Q = 0$, i.e., $\tilde{p}_1 \colon  F_1(Q/S) \to \tilde{S}$ is a projectivization of a rank $2$ vector bundle on $\tilde{S}$.
\end{enumerate}
\end{proposition}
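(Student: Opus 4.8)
The plan is to prove the cyclic chain of implications
$\eqref{item:section} \Rightarrow (1') \Rightarrow (2') \Rightarrow \eqref{item:section-f} \Rightarrow \eqref{item:alpha} \Rightarrow \eqref{item:section-f} \Rightarrow \eqref{item:section}$,
together with the easy $\eqref{item:section-f} \Rightarrow (2')$ which is trivial (a section is a multisection of degree $1$, which is odd). Actually it is cleaner to organize the argument as: $\eqref{item:section} \Leftrightarrow \eqref{item:section-f}$, then $\eqref{item:section-f} \Leftrightarrow \eqref{item:alpha}$, and finally fold in the multisection versions $(1')$ and $(2')$ by showing each is squeezed between the corresponding genuine-section statement and something already proven.

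First I would handle $\eqref{item:section} \Leftrightarrow \eqref{item:section-f}$. A non-degenerate rational section $S \dashrightarrow Q$ is, over a dense open $U \subset S$, a smooth point of each fibre, i.e.\ a point of a two-dimensional quadric not on its singular locus. Through such a point there pass exactly two lines of the quadric (the two rulings of the smooth conic, or the two lines through the vertex of a rank-$3$ quadric, which are distinct since the point is away from the vertex). This gives a rational map $U \dashrightarrow F_1(Q/S)$, hence by Lemma~\ref{lemma:fq-p1} a rational map $U \dashrightarrow \tilde S$ which, composed with $\tilde S \to S$, is the identity — but a priori this hits only "half" of $\tilde S$. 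The correct statement is that the two lines through the point are interchanged by the deck involution of $\tilde S/S$, so the rational section $S \dashrightarrow Q$ produces a rational section of $F_1(Q/S) \to \tilde S$ after possibly passing to a smaller open set. Conversely, a rational section $\tilde S \dashrightarrow F_1(Q/S)$ gives a rational family of lines $\ell \subset Q_x$; intersecting $\ell$ with the "other" line of the pair (the deck-conjugate line) — or more robustly, taking a general point of $\ell$, or better intersecting $\ell$ with a fixed generic hyperplane subbundle — yields a rational section of $Q \to S$, and genericity ensures it avoids the singular locus. I expect the main technical care here is the bookkeeping with the double cover: one must check that a section of $F_1(Q/S) \to \tilde S$ really lands over all of $\tilde S$ and not accidentally only over $S$, and that non-degeneracy is preserved; this is where the hypothesis $S_{\ge 2} = \emptyset$ (corank $\le 1$) is used, so that the fibre structure of $F_1(Q/S) \to S$ over the discriminant is exactly a $\P^1$ glued to another $\P^1$ along a point, as encoded by the étale $\P^1$-bundle $\tilde p_1$.

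Next, $\eqref{item:section-f} \Leftrightarrow \eqref{item:alpha}$ is essentially the definition of the Brauer class together with standard facts: $\tilde p_1 \colon F_1(Q/S) \to \tilde S$ is an étale-locally-trivial $\P^1$-bundle (Lemma~\ref{lemma:fq-p1}), and such a bundle is the projectivization of a rank-$2$ vector bundle if and only if its class $\alpha_Q$ in $\Br(\tilde S)$ vanishes. The implication $\eqref{item:alpha} \Rightarrow \eqref{item:section-f}$ is then immediate: a $\P^1$-bundle of the form $\P_{\tilde S}(\cE')$ admits a section over any open subset where $\cE'$ has a rank-$1$ locally split subsheaf, e.g.\ the open set where a fixed generic global section (or a local section) of $\cE'$ is non-vanishing — in fact over a punctured base it even has regular sections, but a rational one is all we need. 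For $\eqref{item:section-f} \Rightarrow \eqref{item:alpha}$: a rational section of a $\P^1$-bundle extends over codimension $\ge 2$ automatically if the total space is smooth, but in general one argues that a rational section forces the Brauer class, which is an unramified (hence codimension-$1$-detected, and in fact Zariski-locally-constant on a smooth variety) invariant, to be trivial on a dense open, hence trivial; I would cite that $\Br$ of a regular scheme injects into $\Br$ of its generic point and note that a rational section trivializes the class at the generic point. Since $\alpha_Q$ is $2$-torsion this is clean.

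Finally, the multisection variants. Clearly $\eqref{item:section} \Rightarrow (1')$ and $\eqref{item:section-f} \Rightarrow (2')$ (degree-$1$ multisections). For $(1') \Rightarrow \eqref{item:section}$ and $(2') \Rightarrow \eqref{item:section-f}$ I would pass to the generic point: over $K = \kk(S)$ we have a closed point of odd degree on the quadric $Q_K$ (resp.\ a closed point of odd degree on the $\P^1$-bundle $F_1(Q/S)_{\tilde S_K}$). For a quadric of dimension $\ge 1$ over a field, Springer's theorem says that having a closed point of odd degree implies having a rational point; applied to $Q_K$ — which is a smooth conic or a rank-$3$ quadric surface, in any case a quadric hypersurface of positive dimension — this gives a $K$-rational point, and the non-degeneracy of the odd multisection guarantees this point is a smooth point of $Q_K$, hence we get a non-degenerate rational section of $Q \to S$. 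For the $\P^1$-bundle version, an odd-degree closed point on a Severi–Brauer variety $F_1(Q/S)_{\tilde S_K}$ (which is a conic, being a $\P^1$-form) again by Springer/Amitsur forces the underlying algebra to be split, hence a rational point, hence (spreading out) a rational section of $\tilde p_1$. The hard part is making the "spreading out" precise — extending a $K$-point to a rational section over a dense open $U$ — but this is routine: a $K$-point of a scheme of finite type over $S$ extends to a section over some dense open $U \subset S$ by general nonsense (finite presentation / generic flatness). I do not anticipate serious obstacles beyond the double-cover bookkeeping in the first step; Springer's theorem and the Brauer-group dictionary do the rest.

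Remark: an alternative, perhaps slicker, route for the whole proposition is to observe that all five conditions are "birational over $S$" and reduce everything at the outset to the generic fibre, where $\eqref{item:section}$ becomes "$Q_K$ has a smooth $K$-point", $(1')$ becomes "$Q_K$ has an odd-degree point", $\eqref{item:section-f}$ becomes "the conic $F_1(Q/S)_{\tilde S_K}$ over the étale quadratic $\tilde K/K$ has a $\tilde K$-point", and $\eqref{item:alpha}$ becomes "the quaternion-type class $\alpha_Q|_{\tilde K}$ vanishes". Over fields the equivalences $\eqref{item:section} \Leftrightarrow (1')$ and $\eqref{item:section-f} \Leftrightarrow (2')$ are exactly Springer's theorem, $\eqref{item:section-f} \Leftrightarrow \eqref{item:alpha}$ is Châtelet's theorem on conics, and $\eqref{item:section} \Leftrightarrow \eqref{item:section-f}$ is the classical fact relating a point of a two-dimensional quadric to the two rulings through it and the Clifford-algebra description of those rulings. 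One then only needs that each of the five conditions is equivalent to its generic-fibre avatar, which follows from spreading-out for $\eqref{item:section}$–$(2')$ and from the injectivity $\Br(\tilde S) \hookrightarrow \Br(\tilde K)$ (valid since $\tilde S$ is regular, or at least normal with the family flat) for $\eqref{item:alpha}$. I would present the proof in this reduced-to-the-generic-point form.
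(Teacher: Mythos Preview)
Your proposal is essentially correct and overlaps substantially with the paper's proof, but the organization and one key step differ. For $(2) \Leftrightarrow (2') \Leftrightarrow (3)$ you and the paper agree: this is the Severi--Brauer dictionary, which the paper dispatches in one sentence. For $(1) \Leftrightarrow (2)$, your ``two lines through a smooth point / intersect the two deck-conjugate lines'' construction is exactly the $d=1$ case of the paper's argument, including the need for a separate treatment when $S_{\ge 1} = S$. The genuine difference is how the odd-degree multisection conditions are absorbed. The paper proves $(1') \Leftrightarrow (2')$ \emph{directly} by explicit geometry: from an odd-degree $Z \subset Q$ it forms the scheme $Z_1$ of lines meeting $Z$ (same degree over $\tilde S$), and conversely from $Z_1$ it takes pairwise intersections of lines to get $Z \subset Q$ of degree $d^2$, still odd; the equivalence $(1) \Leftrightarrow (2)$ is then read off by setting $d=1$. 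You instead reduce $(1') \Rightarrow (1)$ and $(2') \Rightarrow (2)$ to the generic fibre and invoke Springer's theorem. Both routes are valid; the paper's is fully self-contained and avoids any external input, while yours is shorter and makes the role of ``odd degree'' conceptually transparent.

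One small gap in your argument: in $(1') \Rightarrow (1)$, Springer's theorem yields \emph{some} $K$-point of $Q_K$, not necessarily a smooth one, and your sentence ``the non-degeneracy of the odd multisection guarantees this point is a smooth point'' conflates the input and output points. When the generic fibre is smooth this is harmless, but when $S_{\ge 1} = S$ on a component, $Q_K$ is a cone and the vertex is already a (singular) $K$-point. The fix is easy: project the given odd-degree smooth closed point from the vertex to the base conic, apply Springer there, and lift back along a line to a smooth $K$-point. The paper handles exactly this degenerate case with a parallel but distinct construction via the associated non-degenerate conic bundle $Q' \to S$.
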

\begin{proof}
Statements $(2)$, $(2')$ and $(3)$ are equivalent because $F_1(Q/S) \to \tilde{S}$ is the relative Severi--Brauer variety corresponding to the degree two Azumaya algebra given by $\alpha_Q$.

Let us prove that $(1')$ is equivalent to $(2')$.

Assume $Z \subset Q$ is a locally closed subscheme such that $Z$ does not intersect singular loci of the fibers of $p$ and the map $Z \to S$ is finite of odd degree $d$ onto an open subset $U \subset S$ (this is a rational non-degenerate multisection).
Let $Z_1 \subset F_1(Q/S)$ be the subscheme parameterizing lines that intersect $Z$.
Clearly, the map $Z_1 \to \tilde{S}$ is finite of degree $d$ on $\tilde{U} \subset \tilde{S}$, hence is a rational multisection of odd degree.

Conversely, assume $Z_1 \subset F_1(Q/S)$ is a locally closed subscheme such that the map $Z_1 \to \tilde{S}$ is finite of odd degree~$d$ onto $\tilde{U} \subset \tilde{S}$ for an open subset $U \subset S$.
If $S_{\ge 1} \ne S$, we may (shrinking $U$ if necessary) assume that $U \cap S_{\ge 1} = \emptyset$. 
Then consider the subscheme $Z \subset Q$ obtained by the intersections of lines parameterized by $Z_1$.
Clearly, the map $Z \to S$ is finite of degree $d^2$ onto $U \subset S$, hence a rational multisection of odd degree, which is evidently non-degenerate.

Now assume $S_{\ge 1} = S$, so all quadrics in the family are cones.
The bases of those cones form a non-degenerate conic bundle $Q' \to S$ and $F_1(Q/S) \cong Q'\times_S \tilde{S}$ (recall that in this case $\tilde{S}$ is a nilpotent thickening of $S$, in particular we have a closed embedding $S \to \tilde{S}$).
Restricting $Z_1 \subset F_1(Q/S)$ to $S \subset \tilde{S}$, we obtain a subscheme $Z'_1 \subset Q'$.
The open complement of cones vertices $Q_0 \subset Q$ comes with a map $Q_0 \to Q'$ which is
a Zariski locally trivial $\A^1$-fibration, hence there is a rational section $Q' \dashrightarrow Q_0$.
If $Z \subset Q_0 \subset Q$ is the image of $Z'_1$ under it, it is a rational nondegenerate multisection of $Q \to S$ of degree~$d$.

Finally, note that the above proof of equivalence of $(1')$ and $(2')$ proves at the same time equivalence of $(1)$ and $(2)$
(just keep $d = 1$ everywhere).
\end{proof}

\begin{corollary}\label{corollary:sections-basechange}
Let $p \colon Q \to S$ be a flat family of two-dimensional quadrics with $S_{\ge 2} = \emptyset$.
If $p$ has a nondegenerate rational section, then for any base change $S' \to S$ the family of quadrics $Q' := Q\times_S S' \to S'$ also has a nondegenerate rational section.
\end{corollary}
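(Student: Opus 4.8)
The plan is to reduce the claim to the vanishing of the Brauer class $\alpha_Q$ and to exploit the compatibility of the relevant constructions with base change, rather than trying to pull back a rational section directly. Indeed, by Proposition~\ref{proposition:section-alpha} a flat family of two-dimensional quadrics with trivial corank-$2$ locus admits a nondegenerate rational section if and only if its associated Brauer class vanishes; so it suffices to relate $\alpha_{Q'}$ to $\alpha_Q$.

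First I would check that Proposition~\ref{proposition:section-alpha} even applies to $Q'\to S'$. The corank loci $S_{\ge k}$ are cut out scheme-theoretically by the minors of $q$, and these minors pull back under $S'\to S$; hence $(S')_{\ge k}$ is the scheme-theoretic preimage of $S_{\ge k}$. In particular $(S')_{\ge 2}$ is the preimage of the empty scheme, hence empty, and $Q' = Q\times_S S'\to S'$ is again a flat family of two-dimensional quadrics. Next comes the key point: the determinant double cover commutes with arbitrary base change (as recorded in the remark preceding Lemma~\ref{lemma:qbar-tildes}), so $\widetilde{S'}\cong\tilde S\times_S S'$; the relative Hilbert scheme of lines commutes with base change, so $F_1(Q'/S')\cong F_1(Q/S)\times_S S'$; and both identifications are compatible with the factorizations $F_1\to\tilde S\to S$ of Lemma~\ref{lemma:fq-p1}. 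Therefore the $\P^1$-fibration $F_1(Q'/S')\to\widetilde{S'}$ is the pullback, along $\rho\colon\widetilde{S'}\to\tilde S$, of the $\P^1$-fibration $F_1(Q/S)\to\tilde S$, so that $\alpha_{Q'}=\rho^*\alpha_Q$ in $\Br(\widetilde{S'})$. Since $Q\to S$ has a nondegenerate rational section, Proposition~\ref{proposition:section-alpha} gives $\alpha_Q=0$, hence $\alpha_{Q'}=0$, and Proposition~\ref{proposition:section-alpha} applied to $Q'\to S'$ produces the desired nondegenerate rational section.

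The point demanding care — and the reason for routing the argument through $\alpha_Q$ — is that the base change $S'\to S$ need not be dominant, so the preimage in $S'$ of the open locus over which a rational section of $Q\to S$ is defined need not be dense in $S'$; a naive pullback of the section therefore fails. Vanishing of $\alpha_Q$, by contrast, is a statement over all of $\tilde S$ and so survives any base change, after which the existence half of Proposition~\ref{proposition:section-alpha} rebuilds a section over $S'$ itself. The only substantive inputs are the base-change compatibilities of the determinant double cover and of $F_1(Q/S)$, both immediate from the functorial descriptions used to define these objects (for $\widetilde{S'}$ it is already stated in the text). If one prefers to avoid any implicit irreducibility hypothesis in Proposition~\ref{proposition:section-alpha}, one may first reduce to the case of $S'$ irreducible by working on its components.
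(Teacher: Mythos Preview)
Your proof is correct and follows essentially the same route as the paper: both arguments pass through condition~(\ref{item:alpha}) of Proposition~\ref{proposition:section-alpha} and use that the relevant constructions ($\tilde S$ and $F_1(Q/S)$) commute with base change. The only cosmetic difference is that the paper phrases the conclusion concretely---writing $F_1(Q/S)\cong\P_{\tilde S}(\cF)$ for a rank~$2$ bundle $\cF$ and then restricting $\cF$ to $\tilde S'$---rather than via the functoriality of the Brauer class, but this is the same content.
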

\begin{proof}
If $Q \to S$ has a section, then $F_1(Q/S) \cong \P_{\tilde{S}}(\cF)$, where~$\cF$ is a vector bundle of rank 2 on $\tilde{S}$.
It follows that $F_1(Q'/S') \cong \P_{\tilde{S}'}(\cF\vert_{\tilde{S}'})$, hence $Q' \to S'$ also has a nondegenerate rational section.
\end{proof}

\begin{remark}\label{remark:isotropic-quadrics}
Let $p \colon Q \to S$ be a family of quadrics of arbitrary dimension where $S$ is a scheme over the field $\kk$ of characteristic zero.
It is known that existence of a rational section implies existence of a section locally in Zariski topology on $S \setminus S_{\ge 1}$, that
is over the complement of the discriminant~\cite{Panin}. 
Thus, Corollary~\ref{corollary:sections-basechange} is a stronger version of Panin's theorem for $n = 2$.
\end{remark}

\begin{theorem}\label{thm:reduction-dim2}
Let $p \colon Q \to S$ be a flat family of two-dimensional quadrics with $S_{\ge 2} = \emptyset$.
If either of equivalent conditions of Proposition~\textup{\ref{proposition:section-alpha}} holds, then
\begin{equation*}
[Q] = [S](1 + \L^2) + [\tilde{S}]\L.
\end{equation*}
\end{theorem}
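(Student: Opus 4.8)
The plan is to deduce Theorem~\ref{thm:reduction-dim2} from Corollary~\ref{corollary:q2-regular-section}, which is exactly the special case $n = 2$, $k = 0$ and already settles the situation of a nondegenerate \emph{regular} section, giving there $[Q] = [S](1 + \L^2) + [\tilde{S}]\L$. So the only thing to do is to upgrade a rational section to a regular one, and I would do this by Noetherian induction on $S$, cutting $S$ into the open locus where the section is defined and its closed complement, and handling the open piece by Corollary~\ref{corollary:q2-regular-section} and the closed piece by the inductive hypothesis.

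Concretely, the base case $S = \emptyset$ is trivial. For the inductive step, fix one of the equivalent data of Proposition~\ref{proposition:section-alpha}, say a rational nondegenerate section of $p$; by definition it restricts to a regular nondegenerate section of $Q_U := Q \times_S U \to U$ over some dense open $U \subset S$. The family $Q_U \to U$ is flat with $(Q_U)_{\ge 2} = \emptyset$ (the corank loci are cut out by minors and hence commute with the base change $U \to S$), so Corollary~\ref{corollary:q2-regular-section} applies and yields $[Q_U] = [U](1 + \L^2) + [\tilde{S}\times_S U]\,\L$, where I use that the formation of the determinant double cover commutes with base change, as noted in the remark after~\eqref{eq:tilde-s}, to identify the determinant double cover of $U$ with $\tilde{S}\times_S U$. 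Now let $Z := S \setminus U$ be the closed complement and $Q_Z := Q \times_S Z$; again $(Q_Z)_{\ge 2} = \emptyset$, and by Corollary~\ref{corollary:sections-basechange} the family $Q_Z \to Z$ still admits a nondegenerate rational section. Since $Z$ is a proper closed subscheme of $S$, the induction hypothesis gives $[Q_Z] = [Z](1 + \L^2) + [\tilde{S}\times_S Z]\,\L$. Adding the two relations and using the cut-and-paste relations $[Q] = [Q_U] + [Q_Z]$, $[S] = [U] + [Z]$ and $[\tilde{S}] = [\tilde{S}\times_S U] + [\tilde{S}\times_S Z]$ in $\KVar$ (the last one because $\tilde{S}\times_S U \hookrightarrow \tilde{S}$ is open with complement $\tilde{S}\times_S Z$) produces the desired identity.

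The one genuinely non-formal ingredient is Corollary~\ref{corollary:sections-basechange}: without knowing that a rational section of $Q \to S$ forces a rational section of $Q_Z \to Z$, the induction would not close, since the restriction of a rational section of $Q \to S$ to $Z$ generally carries no information. This is precisely where the vanishing of the Brauer class $\alpha_Q$ and the resulting identification $F_1(Q/S) \cong \P_{\tilde{S}}(\cF)$ from Proposition~\ref{proposition:section-alpha} do the work: a projective bundle structure is stable under base change, so it, and hence the section it produces, propagates to every stratum. I do not expect any further obstacle; the remaining steps are the standard scissor relations in the Grothendieck ring together with the base-change compatibility of $\tilde{S}$.
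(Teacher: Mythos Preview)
Your proposal is correct and follows essentially the same argument as the paper: Noetherian induction on $S$, applying Corollary~\ref{corollary:q2-regular-section} on the open locus $U$ where the rational section is regular and nondegenerate, invoking Corollary~\ref{corollary:sections-basechange} to propagate the hypothesis to the closed complement, and summing the two identities. Your write-up is in fact a bit more explicit than the paper's about the base-change compatibility of $\tilde{S}$ and the scissor relations, but the logic is identical.
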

\begin{proof}
We use Noetherian induction on $S$. 
When $S$ is empty, there is nothing to prove.
Otherwise, we choose a rational non-degenerate section $s$ of $Q \to S$ and let $U \subset S$ be the open subset where $s$ is regular and non-degenerate.
Applying Corollary~\ref{corollary:q2-regular-section} to $Q_U \to U$ we deduce
\begin{equation*}
[Q_U] = [U](1 + \L^2) + [\tilde{U}]\L.
\end{equation*}
On the other hand, denoting by $S' = S \setminus U$ the complement, by Corollary~\ref{corollary:sections-basechange} the conditions of the Theorem hold true for the restricted family of quadrics $Q' := Q \times_S S' \to S'$.
By induction
\begin{equation*}
[Q'] = [S'](1 + \L^2) + [\tilde{S'}]\L.
\end{equation*}
Summing up these equalities, we obtain the Theorem.
\end{proof}

\subsection{Some applications}

In this section we gather a couple of immediate applications of the above results.

\subsubsection{Cubic fourfolds with a plane}

Let $X \subset \P^5$ be a smooth cubic fourfold containing a $2$-plane $P \subset X$ and let $\wt{X} = \Bl_P(X)$ be the blow up of $P$.
Projecting from $P$ we obtain a flat family of two-dimensional quadrics $p \colon \wt{X} \to \P^2$ with determinant double cover K3 surface $Y$.
By Theorem \ref{thm:reduction-dim2} we get that as soon as $p$ admits a rational section (by \cite[Prop. 4.7]{Kuz10} this is equivalent to existence of an algebraic cycle $T \in \CH^2(X)$ 
such that the intersection degree $T \cdot (H^2 - P)$ is odd), then $[\wt{X}] = [\P^2](1+\L^2) + [Y]\L$ so that
\begin{equation}\label{eq:cubic}
[X] = 1 + \L^2 + \L^4 + [Y]\L.
\end{equation}
This generalizes the computation for the case of $X$ admitting two disjoint planes \cite[Example 5.9]{GS}.

A cubic fourfold as above is rational, and it is plausible
that a smooth cubic fourfold is rational if and only if it has a class in the Grothendieck ring given by (\ref{eq:cubic}) for some K3 surface $Y$.

\subsubsection{Verra fourfolds}
\label{subsection:verra}

Let $X \to \P^2 \times \P^2$ be a double covering branched over a divisor $D \subset \P^2 \times \P^2$ of bidegree $(2,2)$.
The fibers of the projection $p_1 \colon X \to \P^2$ to the first factor are double coverings of~$\P^2$ branched over a conic, hence are two-dimensional quadrics.
Thus $p_1 \colon X \to \P^2$ is a family of two-dimensional quadrics.
Consequently, its determinant double cover $Y_1 \to \P^2$ is a K3 surface that comes with a Brauer class~$\alpha_1$.
Assuming that the Brauer class vanishes, we deduce from Theorem~\ref{thm:reduction-dim2} a relation
\begin{equation*}
[X] = [\P^2](1 + \L^2) + [Y_1]\L.
\end{equation*}
On the other hand, applying the same argument to the projection $p_2 \colon X \to \P^2$ to the second factor, we construct yet another K3 surface double cover $Y_2 \to \P^2$ with a Brauer class $\alpha_2$,
and assuming again that the Brauer class vanishes we obtain
\begin{equation*}
[X] = [\P^2](1 + \L^2) + [Y_2]\L.
\end{equation*}
Comparing the two relations we deduce an L-equivalence:
\begin{equation*}
([Y_1] - [Y_2])\L = 0.
\end{equation*}
As we explained above, this holds as soon as the two Brauer classes $\alpha_1 \in \Br(Y_1)$ and $\alpha_2 \in \Br(Y_2)$ vanish.
However, we do not know a reasonable geometric reformulation of this condition, and did not check that there is an example when this condition holds, but the surfaces $Y_1$ and $Y_2$ are not isomorphic.
We plan to return to this question in future.

\section{Complete intersections of quadrics}
\label{section:intersections}

\subsection{Complete intersections of quadrics and the corresponding family of quadrics}

Let $V$ and~$W$ be vector spaces of dimensions 
\begin{equation*}
\dim V = n + 2,
\qquad 
\dim W = m + 1,
\end{equation*}
and choose an embedding $q \colon W \hookrightarrow S^2V^\vee$.
Denote by $Q_w \subset \P(V)$ the quadric with equation $q(w) \in S^2V^\vee$.
Assume that
\begin{equation*}
X := \bigcap_{w \in W} Q_w \subset \P(V)
\end{equation*}
is a complete intersection, i.e., $\dim X = n - m$.

We consider $\{Q_w\}$ as a family of quadrics in $\P(V)$ parameterized by $\P(W)$.
Its total space $Q$ is a divisor in $\P(V) \times \P(W)$ of bidegree~$(2,1)$ with equation $q \in S^2V^\vee \otimes W^\vee$.
So, by definition 
\begin{equation*}
Q \to \P(W), 
\end{equation*}
is a flat family of $n$-dimensional quadrics corresponding to $\cL = \cO_{\P(W)}(-1)$ and $\cE = V \otimes \cO_{\P(W)}$ in the notation of Section \ref{sec:quadrics}.
We denote by $D \subset \P(W)$ its discriminant divisor, and when $n$ is even we denote~by
\begin{equation*}
Y \to \P(W)
\end{equation*}
its determinant double cover.
It is a double cover branched over $D$.

The following criterion of smoothness is well known.

\begin{lemma}\label{lemma:smoothness-criterion}
A complete intersection of quadrics $X$ is smooth if and only if the family of quadrics is regular, i.e., if $X$ does not intersect the singular locus of any quadric in the family. 
Furthermore, the discriminant divisor $D$ \textup{(}and for even $n$ the determinant double cover $Y$ as well\textup{)} is smooth if and only if the family of quadrics is regular and does not contain quadrics of corank~$2$.
\end{lemma}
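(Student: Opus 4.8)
The plan is to reduce both assertions to standard, well-understood facts: the fact that the singular points of a quadric cone are detected by the corank of its defining symmetric form, and the Jacobian criterion for the total space of the family. First I would set up coordinates: writing the family equation as $q \in S^2V^\vee \otimes W^\vee$, the incidence variety $Q \subset \P(V)\times\P(W)$ has bidegree $(2,1)$, and for a point $(v,w)$ the fibre $Q_w$ is singular at $v$ precisely when the bilinear form $q(w)(v,-) \in V^\vee$ vanishes, i.e.\ when $v$ lies in the kernel of the symmetric matrix $q(w)$. Thus the locus of pairs $(v,w)$ with $v \in \Sing(Q_w)$ is exactly the locus where the partial derivatives of $q$ in the $V$-directions vanish. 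I would record this as the base case from which everything follows.

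For the first statement I would argue in both directions. If $X$ meets $\Sing(Q_{w_0})$ at a point $v_0$ for some $w_0 \in \P(W)$, then $v_0 \in Q_w$ for all $w$, and the differential of the equation of every $Q_w$ vanishes at $v_0$; since $X = \bigcap_w Q_w$ is cut out in $\P(V)$ by the linear system $\{q(w)\}_{w \in W}$, all the defining equations of $X$ are singular at $v_0$, so $X$ is singular there (this uses that $X$ is a complete intersection, so smoothness of $X$ at $v_0$ is equivalent to the differentials of a spanning set of the defining quadrics being of the expected rank $m+1$ at $v_0$). Conversely, if the family is regular, i.e.\ $X \cap \Sing(Q_w) = \emptyset$ for all $w$, I would apply the Jacobian criterion to $Q \subset \P(V)\times\P(W)$: at a point $(v,w) \in Q$ with $v \in X$, the $V$-derivatives of $q$ span a subspace of $V^\vee$ of corank $m+1$ (here regularity is what guarantees this rank is maximal, after accounting for the fact that $v$ lies on all the quadrics), and combined with the $W$-direction this shows $Q$ is smooth along $X \times \P(W)$. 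Smoothness of $Q$ away from $X \times \P(W)$ is automatic from the same computation at points where $v \notin \Sing(Q_w)$. Then $X$, being a fibre of a smooth morphism or, more directly, cut out inside the smooth projective space $\P(V)$ by the quadrics $q(w)$ with everywhere-independent differentials, is smooth. I expect the only subtlety here is bookkeeping with the expected codimension and the distinction between the singular locus of $Q$ and the singular loci of its fibres; there is no deep input.

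For the second statement, the key input is the description of the corank stratification. The discriminant divisor $D \subset \P(W)$ is the zero locus of $\det(q(w))$, which vanishes to order exactly $1$ at a point of corank $1$ and to order $\ge 2$ at a point of corank $\ge 2$ (a standard fact about the determinantal variety, using that near a corank-$1$ point the determinant, in suitable local coordinates, is a unit times a single coordinate function, while near a corank-$\ge 2$ point the rank drops by at least two, forcing a singularity of the discriminant). Hence $D$ is smooth at $w$ iff $\corank q(w) \le 1$. Similarly, since the determinant double cover $Y \to \P(W)$ is branched exactly over $D$ and is constructed from the square-root of $\det(q)$ (via the algebra $\cO \oplus (\det\cE \otimes \cL^{k+2})$ of \eqref{eq:tilde-s}), $Y$ is smooth over the smooth part of $D$ and, conversely, picks up a singularity over a singular point of $D$; so $Y$ smooth $\iff$ $D$ smooth $\iff$ no corank-$2$ quadrics, provided we already know the family is regular so that $X$, hence the whole geometry, is in good shape. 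Combining this with the first part: the family being regular and containing no corank-$2$ quadrics is equivalent to $X$ smooth and $D$ smooth, which is exactly the claim. The main obstacle, such as it is, is verifying the local normal form of $\det(q)$ along the corank stratification carefully enough to conclude the order-of-vanishing statement; everything else is formal once that is in hand.
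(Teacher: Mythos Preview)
Your argument for the first statement is essentially correct in spirit, though the sentence ``the differential of the equation of every $Q_w$ vanishes at $v_0$'' is false as written: only the differential of $Q_{w_0}$ vanishes. What you need (and what your parenthetical gets right) is that this single vanishing drops the rank of the Jacobian of the $m+1$ defining quadrics below $m+1$. The converse is also cleaner done directly, as in the paper: if the differentials of a basis of quadrics are dependent at $v$, then some nontrivial combination $\sum c_i\, dq(w_i)\vert_v = dq\bigl(\sum c_i w_i\bigr)\vert_v$ vanishes, exhibiting $v$ as a singular point of the quadric $Q_{\sum c_i w_i}$. Your detour through smoothness of the total space $Q$ is not wrong (indeed $Q$ is singular exactly at pairs $(v,w)$ with $v \in X \cap \Sing(Q_w)$), but smoothness of $Q$ does not by itself give smoothness of $X$, and your final clause ``with everywhere-independent differentials'' simply asserts what has to be proved.

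The real gap is in the second statement. You claim $D$ is smooth at $w$ if and only if $\corank q(w) \le 1$, appealing to the local normal form of $\det$ on the space of all symmetric matrices. But $D$ is not the full determinantal hypersurface $\Delta \subset \P(S^2V^\vee)$: it is the linear section $D = \P(W) \cap \Delta$, and smoothness of $D$ at a corank-$1$ point $w_0$ requires not only that $\Delta$ be smooth there (which it always is) but that $\P(W)$ meet $\Delta$ \emph{transversely} at $w_0$. Concretely, if $v_0$ generates $\Ker q(w_0)$, the differential of $\det$ at $q(w_0)$ in the direction $q(w)$ is, up to a nonzero scalar, $q(w)(v_0,v_0)$; hence the differential of $\det(q)$ restricted to $\P(W)$ vanishes at $w_0$ exactly when $q(w)(v_0,v_0)=0$ for all $w \in W$, i.e.\ when $v_0 \in X$. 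This is precisely the failure of regularity, and it is the mechanism by which regularity enters the second statement --- not, as you suggest, merely to keep ``the whole geometry in good shape''. Without this transversality computation your equivalence is false as stated: there exist families with no corank-$2$ quadrics but with $D$ singular (whenever $X$ passes through the vertex of some corank-$1$ quadric). The paper handles this via the description of the tangent hyperplane to $\Delta$ at a corank-$1$ point; your order-of-vanishing approach can be completed, but only once you make this restricted-differential computation explicit.
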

\begin{proof}
Assume a closed point $v$ of $X$ lies on the singular locus of a quadric $Q_w$.
The tangent space at $v$ to~$Q_w$ is then equal to the tangent space of~$\P(V)$, hence has dimension $n + 1$.
The tangent space of $X$ is obtained by intersecting it with tangent spaces of $m$ other quadrics, hence it has dimension at least $(n+1) - m = n - m + 1$.
Since the dimension of $X$ is $n-m$, this point is a singularity of $X$.

Conversely, assume $v$ is a singular point of $X$. 
Then the dimension of the tangent space at $v$ to $X$ is at least $n-m+1$, and its codimension in the tangent space of $\P(V) = \P^{n+1}$ is at most $m$.
But this space is the intersection of $(m+1)$ hyperplanes (tangents spaces to the quadrics), hence a linear combination of these hyperplanes is zero.
Then the corresponding linear combination of quadrics is singular at $v$.

For the second part note that $D$ is the intersection of $\P(W) \subset \P(S^2V^\vee)$ with the discriminant divisor $\Delta \subset \P(S^2V^\vee)$.
It is singular at a point $w \in D$ either if $w$ is a singular point of $\Delta$, i.e., the quadric $Q_w$ has corank $\ge 2$, or if $\P(W)$ is tangent to $\Delta$ at $w$.
But the tangent space to $\Delta$ at quadric $q_0$ of corank~1 consists of all $q$ such that $q(v_0,v_0) = 0$, where $v_0$ is a generator of~$\Ker q_0$.
Thus, the singular points of $D$ of the first type appear if and only if $\P(W)$ contains quadrics of corank~2, and those of the second type appear if and only if $v_0$ is isotropic for 
all quadrics in $\P(W)$ which means that the family of quadrics is not regular.
\end{proof}


It is easy to compute the class of $Q$ in the Grothendieck ring of varieties.

\begin{lemma}\label{lemma:q-x}
The class of $Q$ in the Grothendieck ring can be written as
\begin{equation*}
[Q] = [\P^{n+1}][\P^{m-1}] + [X]\L^m.
\end{equation*}
\end{lemma}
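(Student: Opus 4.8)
The plan is to compute $[Q]$ by using the projection to the \emph{other} factor, $\rho\colon Q\to\P(V)$, and stratifying $\P(V)$ according to the fibre dimension of $\rho$. Recall that $Q\subset\P(V)\times\P(W)$ is the divisor cut out by $q\in S^2V^\vee\otimes W^\vee$, so the fibre of $\rho$ over a closed point $v\in\P(V)$ is $\{[w]\in\P(W)\mid q(w)(v,v)=0\}$, i.e.\ the set of members of the family of quadrics passing through $v$. Globally, the assignment $v\mapsto\bigl(w\mapsto q(w)(v,v)\bigr)$ is a morphism of sheaves on $\P(V)$
\[
\mu\colon W\otimes\OO_{\P(V)}\longrightarrow\OO_{\P(V)}(2)
\]
determined by the $m+1$ quadratic forms defining $X$. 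By the very definition of $X$ the image of $\mu$ is $I_X(2)$, so $\Coker\mu\cong\OO_X(2)$ is supported on $X$; hence $\mu$ is surjective over the open subscheme $U:=\P(V)\setminus X$, and there $\Ker\mu$ is a vector subbundle of $W\otimes\OO_U$ of rank $m$. Consequently $\rho^{-1}(v)=\P(W)$ for $v\in X$, while $\rho^{-1}(v)\cong\P^{m-1}$ for $v\in U$.

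Next I would split the class of $Q$ along the closed embedding $X\subset\P(V)$ and its open complement $U$. Over $X$ the linear form $q(w)(v,v)$ vanishes identically, so $\rho^{-1}(X)\cong X\times\P(W)$ and $[\rho^{-1}(X)]=[X][\P^m]$. Over $U$ the subvariety $\rho^{-1}(U)$ is the (sub‑)projectivization $\P_U(\Ker\mu)$ of the rank‑$m$ bundle found above, which is a Zariski locally trivial $\P^{m-1}$-bundle; by Lemma~\ref{lemma:zltf} we get $[\rho^{-1}(U)]=[U][\P^{m-1}]=\bigl([\P^{n+1}]-[X]\bigr)[\P^{m-1}]$. (If one prefers to avoid vector bundles, one may instead stratify $U$ further --- on the stratum where a chosen $m$ of the $m+1$ equations are everywhere linearly independent the $\P^{m-1}$-bundle is manifestly trivial --- and apply Lemma~\ref{lemma:zltf} to each stratum directly.)

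Finally, adding the two contributions I would obtain
\[
[Q]=\bigl([\P^{n+1}]-[X]\bigr)[\P^{m-1}]+[X][\P^m]=[\P^{n+1}][\P^{m-1}]+[X]\bigl([\P^m]-[\P^{m-1}]\bigr),
\]
and since $[\P^m]-[\P^{m-1}]=\L^m$ this is exactly the asserted identity (the case $m=0$ being trivial, as then $Q\cong X$). I do not expect any genuine obstacle here: the argument is entirely formal once the fibres of $\rho$ are identified, and the only mildly delicate point is the Zariski local triviality over $U$, which is immediate from $\mu|_U$ being a surjection of vector bundles (or from the stratification remark). Note that the complete intersection hypothesis on $X$ plays no role in this particular computation.
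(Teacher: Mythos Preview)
Your argument is correct and follows essentially the same route as the paper: project $Q$ to $\P(V)$, stratify into $X$ and its complement, identify the fibres as $\P^m$ and $\P^{m-1}$ respectively, and apply Lemma~\ref{lemma:zltf}. You supply a bit more detail on the Zariski local triviality over $U$ (via the kernel bundle of $\mu$), but the strategy and the final computation are identical to the paper's.
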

\begin{proof}
The restriction of the map $Q \to \P(V) \cong \P^{n+1}$ to $\P(V) \setminus X$ is a Zariski locally trivial fibration with fiber~$\P^{m-1}$, and its restriction to $X$ is just $X \times \P(W) {} \cong X \times \P^m$.
By Lemma~\ref{lemma:zltf} we have
\begin{equation*}
[Q] = ([\P^{n+1}] - [X])[\P^{m-1}] + [X][\P^m] = [\P^{n+1}][\P^{m-1}] + [X]([\P^m]-[\P^{m-1}]),
\end{equation*}
which is precisely what we need.
\end{proof}

\subsection{Hyperbolic reduction for constant sections}

The above lemma relates the class of the intersection of quadrics $X$ to the class of the family $Q$.
On the other hand, we can use the results of Section~\ref{sec:quadrics}
to express the class of $Q$ differently.

\begin{lemma}\label{lemma:constant-reduction}
Assume $X$ is smooth and contains a linear space $P \subset X$ of dimension $k$.
Then
\begin{equation*}
[Q] = [\P^m][\P^k](1 + \L^{n-k}) + [\bar{Q}_P]\L^{k+1},
\end{equation*}
where $\bar{Q}_P \to \P(W)$ is a family of quadrics of dimension $n -2k - 2$.
\end{lemma}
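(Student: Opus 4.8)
The plan is to realize the linear space $P$ as a constant, nondegenerate $k$-section of the quadric fibration $Q \to \P(W)$, and then to quote Corollary~\ref{proposition:hyperbolic-reduction} with $S = \P(W) \cong \P^m$.

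First I would set up the section. Write $P = \P(U_P)$ for a $(k+1)$-dimensional subspace $U_P \subseteq V$. The inclusion $P \subseteq X = \bigcap_{w \in W} Q_w$ means exactly that $q(w)\vert_{U_P} = 0$ for every $w \in W$, so the constant subbundle
\begin{equation*}
\cU_{k+1} := U_P \otimes \cO_{\P(W)} \ \hookrightarrow\ V \otimes \cO_{\P(W)} = \cE
\end{equation*}
is isotropic for $q$. By the correspondence between $k$-sections and isotropic subbundles recalled in Section~\ref{sec:quadrics}, $\cU_{k+1}$ determines a $k$-section $s_P \colon \P(W) \to F_k(Q/\P(W))$ whose value at every geometric point is the fixed subspace $P$.

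Next I would verify that $s_P$ is nondegenerate. Since $X$ is smooth, Lemma~\ref{lemma:smoothness-criterion} says the family of quadrics is regular, i.e., $\Sing(Q_w) \cap X = \emptyset$ for every $w$; as $P \subseteq X$ we get $P \cap \Sing(Q_w) = \emptyset$, which is precisely nondegeneracy of $s_P$. This is the only step where the smoothness of $X$ enters, and it is the one point that requires any care; there is no deeper obstacle here, since once the constant section is known to be nondegenerate the lemma becomes a direct specialization of Corollary~\ref{proposition:hyperbolic-reduction}.

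Finally I would apply Corollary~\ref{proposition:hyperbolic-reduction} to the flat family of $n$-dimensional quadrics $Q \to \P(W)$ together with the nondegenerate $k$-section $s_P$. Writing $\bar{Q}_P := \bar{Q}_{s_P}$ for the associated hyperbolic reduction — a family of quadrics over $\P(W)$ of dimension $n - 2k - 2$, since $\bar\cE_{s_P}$ has rank $(n+2) - 2(k+1) = n - 2k$ — the corollary gives
\begin{equation*}
[Q] = [\P(W)][\P^k](1 + \L^{n-k}) + [\bar{Q}_P]\L^{k+1} = [\P^m][\P^k](1 + \L^{n-k}) + [\bar{Q}_P]\L^{k+1},
\end{equation*}
which is the claimed identity.
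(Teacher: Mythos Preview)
Your proposal is correct and follows essentially the same approach as the paper: realize $P$ as a constant $k$-section of $Q \to \P(W)$, invoke Lemma~\ref{lemma:smoothness-criterion} to check nondegeneracy, and then apply Corollary~\ref{proposition:hyperbolic-reduction}. The paper's proof is just a terser version of what you wrote.
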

\begin{proof}
By Lemma~\ref{lemma:smoothness-criterion} the space $P$ does not intersect the singular locus of any quadric in the family.
Therefore, the section of $F_k(Q/\P(W))$ given by the family $P \times \P(W) \subset Q$ is nondegenerate.
So, Corollary~\ref{proposition:hyperbolic-reduction} applies and gives the required formula.
\end{proof}

The following geometric interpretation of the hyperbolic reduction $\bar{Q}_P$ is useful.
Assume $P = \P(U)$, where $U \subset V$ is a linear subspace, $\dim(U) = k+1$.

\begin{lemma}\label{lemma:bar-q-explicit}
The scheme $\bar{Q}_P$ can be realized as a complete intersection in $\P(W) \times \P(V/U) \cong \P^m \times \P^{n-k}$ 
of $k+1$ divisors of bidegree $(1,1)$ and one divisor of bidegree~$(1,2)$.
Furthermore, all the fibers of the projection $\bar{Q}_P \to \P(V/U) \cong \P^{n-k}$ are linear subspaces in $\P(W) \cong \P^m$, and if $m \ge k + 1$ the general fiber is isomorphic to~$\P^{m-k-2}$
\textup{(}and is empty if $m = k+1$\textup{)}, and the locus over which the fibers jump is the image of the linear projection of $X$ from $\P(U)$ to $\P(V/U)$.
\end{lemma}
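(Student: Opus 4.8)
The plan is to extract an explicit model of $\bar{Q}_P$ from the general description of the hyperbolic reduction given in Lemma~\ref{lemma:bar-q-explicit-general}, and then translate it into coordinates adapted to the subspace $U \subset V$. First I would specialize the setup of Section~\ref{sec:quadrics}: here $S = \P(W)$, $\cE = V \otimes \cO$, $\cL = \cO(-1)$, and the $k$-section is the constant one $\cU_{k+1} = U \otimes \cO \hookrightarrow V \otimes \cO$, which is nondegenerate by Lemma~\ref{lemma:smoothness-criterion} since $X$ is smooth. Then $\cE/\cU_{k+1} = (V/U) \otimes \cO$, so $\P_S(\cE/\cU_{k+1}) = \P(W) \times \P(V/U) \cong \P^m \times \P^{n-k}$, with $H'$ the hyperplane class of the $\P(V/U)$ factor and $D$ the hyperplane class of the $\P(W)$ factor (i.e. the divisor of $\cL^\vee = \cO_{\P(W)}(1)$). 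By Lemma~\ref{lemma:bar-q-explicit-general}, $\bar{Q}_P$ is the zero locus inside this product of a section $q'$ of the rank-$(k+2)$ bundle $\cU_{k+2}^\vee \otimes \cL^\vee(H')$, which sits in the short exact sequence \eqref{eq:seq2}; since $\cU_{k+2}$ is the extension of $\cO(-H')$ by $U \otimes \cO$ from \eqref{eq:diagram}, its dual twisted appropriately is an extension of $U^\vee \otimes \cL^\vee(H')$ by $\cL^\vee(2H')$. Splitting $q'$ according to this sequence, the quotient part lives in $H^0(U^\vee \otimes \cO(D + H')) \cong U^\vee \otimes W^\vee \otimes (V/U)^\vee$, giving $k+1$ sections of bidegree $(1,1)$ (the components of the restriction $q|_{U \otimes (V/U)}$, i.e. the bilinear pairing of $U$ against $V/U$), while the sub part, defined on the common zero locus of those, lives in $H^0(\cO(D + 2H')) \cong W^\vee \otimes S^2(V/U)^\vee$, a single section of bidegree $(1,2)$ (the quadratic form $\bar{q}$ on $V/U$). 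This realizes $\bar{Q}_P$ as the stated complete intersection of $k+1$ divisors of bidegree $(1,1)$ and one of bidegree $(1,2)$ in $\P^m \times \P^{n-k}$.

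For the second assertion I would examine the projection $\bar{Q}_P \to \P(V/U) \cong \P^{n-k}$. Fix a point $\bar v \in \P(V/U)$ with representative $v \in V$. The bidegree-$(1,1)$ equations become, at $\bar v$, the linear forms $w \mapsto q(w)(u, v)$ on $W$ as $u$ ranges over $U$; together with the bidegree-$(1,2)$ equation $w \mapsto q(w)(v,v)$, the fiber over $\bar v$ is the linear subspace of $\P(W)$ cut out by the $(k+2)$ linear forms $w \mapsto q(w)(u,v)$ for $u$ in $U \oplus \kk v$, i.e. $\P$ of the annihilator of the image of $U \oplus \kk v \to W^\vee$, $u' \mapsto q(\,\cdot\,)(u', v)$. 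When these $k+2$ forms are linearly independent the fiber is $\P^{m-k-2}$ (empty if $m = k+1$); the jump locus is precisely where the map $U \oplus \kk v \to W^\vee$ drops rank, i.e. where there exists $u' \in U \oplus \kk v$, $u' \notin U$ (using nondegeneracy of $P$), with $q(w)(u', v) = 0$ for all $w \in W$ — equivalently, $u'$ is a point of $X$ whose projection to $\P(V/U)$ is $\bar v$. So the jump locus is the image of the linear projection $X \dashrightarrow \P(V/U)$ from $\P(U)$, as claimed.

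I expect the bookkeeping of twists in the splitting of $q'$ via \eqref{eq:seq2} — keeping straight which symmetric-square component lands in which summand, and matching $\cO(D)$ versus $\cL^\vee$ versus $\cO(H')$ — to be the only delicate point; it is purely a chase through the sequences of Proposition~\ref{lemma:q-blowup}, but it is where sign/twist errors would creep in. The fiber analysis in the second half is then elementary linear algebra over each point of $\P(V/U)$, with the only subtlety being the use of nondegeneracy of the section $P$ to guarantee that a rank drop of $U \oplus \kk v \to W^\vee$ always produces a vector genuinely outside $U$, hence an honest point of $X$ projecting to $\bar v$.
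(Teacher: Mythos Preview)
Your first paragraph is essentially the paper's argument: specialize the data of Lemma~\ref{lemma:bar-q-explicit-general} to $S=\P(W)$, $\cE=V\otimes\cO$, $\cU_{k+1}=U\otimes\cO$, observe that the sequence~\eqref{eq:seq2} splits, and read off the bidegrees. That part is fine.

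The fiber analysis, however, has a genuine gap. Your claim that a kernel vector of $U\oplus\kk v \to W^\vee$ must lie outside $U$ ``using nondegeneracy of $P$'' is incorrect. Nondegeneracy of the constant $k$-section says that for every $w$ and every nonzero $u\in U$ the linear form $q(w)(u,-)$ on $V$ is nonzero; it does \emph{not} say that for every fixed $v$ the form $w\mapsto q(w)(u,v)$ on $W$ is nonzero. The case $u'\in U$ genuinely occurs: it happens exactly when the line $\overline{uv}$ is tangent to $X$ at $u\in\P(U)$, i.e.\ when $\bar v$ lies in the image of the exceptional divisor of $\Bl_P(X)$ under the resolved projection. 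The paper treats the two cases $u\in U$ and $u\notin U$ separately, identifying the first with tangent directions and the second with honest points of $X$, and then concludes that the jump locus is the image of $\Bl_P(X)\to\P(V/U)$.

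There is also a smaller slip in the case $u'\notin U$: with your parametrization $u'=u_0+\mu v$ and $q(w)(u',v)=0$, one computes $q(w)(u',u')=-\mu^2 q(w)(v,v)$, so $u'$ itself is not in $X$ in general. The point of $X$ projecting to $\bar v$ is rather $u_0+2\mu v$; the discrepancy is exactly the ``different scalars'' the paper flags when describing the map $U_{k+2}\otimes(U_{k+2}/U)\to W^\vee$ on the two summands. The paper avoids this by re-choosing the splitting so that $v=u'$, which makes the verification $q(w)(u',u')=0$ immediate.
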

\begin{proof}
We apply the description of $\bar{Q}_P$ from Lemma~\ref{lemma:bar-q-explicit-general}.
In our case $S = \P(W)$, $\cE = V \otimes \cO_{\P(W)}$ and $\cU_{k+1} = U \otimes \cO_{\P(W)}$, thus $\P_S(\cE/\cU_{k+1}) = \P(W) \times \P(V/U)$.

Furthermore, the line bundles $\cL^\vee$ and $\OO(H')$ on $\P(W) \times \P(V/U)$
are isomorphic to $\cO(1,0)$ and $\cO(0,1)$ respectively and the short exact sequence~\eqref{eq:seq2} of Lemma~\ref{lemma:bar-q-explicit-general}
rewrites as
\begin{equation*}
0 \to \cO(1,2) \to \cU_{k+2}^\vee \otimes \OO(1,1) \to U^\vee \otimes \cO(1,1) \to 0 
\end{equation*}
and splits. 
Thus $\cU_{k+2} \cong U \otimes \cO \oplus \cO(0,-1)$ is a pullback from $\P(V/U)$ of the vector bundle $U \otimes \cO \oplus \cO(-1)$,
and~$\bar{Q}_P$ is the intersection of $(k+1)$ divisors of bidegree~$(1,1)$ (indexed by a basis in $U$), and a divisor of bidegree~$(1,2)$.

Since the conditions defining $\bar{Q}_P$ are linear along $\P(W)$, all fibers of the projection $\bar{Q}_P \to \P(V/U)$ are linear spaces.
Explicitly the fibers can be described as follows.
The pushforward of $\cU_{k+2}^\vee \otimes \OO(1,1)$ along the projection to $\P(V/U)$ 
is isomorphic to $W^\vee \otimes (U^\vee \otimes \cO(1) \oplus \cO(2))$
and the section $q'$ defines a linear map
\begin{equation}\label{eq:map-fiber-qp1}
W \otimes \cO_{\P(V/U)} \to U^\vee \otimes \cO(1) \oplus \cO(2)
\end{equation}
For every point $x = U_{k+2}/U \in \P(V/U)$ evaluating the above map at $x$
gives a linear map
\begin{equation}\label{eq:map-fiber-qp}
W \xrightarrow{\ q'(x)\ } U_{k+2}^\vee \otimes (U_{k+2}/U)^\vee
\end{equation}
and the fiber of $\bar{Q}_P \to \P(V/U)$ over $x$ is the projectivization of the kernel of~\eqref{eq:map-fiber-qp}.
Since the dimension of the source of~\eqref{eq:map-fiber-qp} is $m+1$, and the dimension of the target is~$k + 2 {}\le m + 1$, it remains to show that the map is surjective for general $U_{k+2}$ and to describe the set of all $U_{k+2}$ for which the surjectivity fails.
The dual of~\eqref{eq:map-fiber-qp} can be rewritten as
\begin{equation*}
U_{k+2} \otimes (U_{k+2}/U) \to W^\vee. 
\end{equation*}
Moreover, for any choice of splitting $U_{k+2} \cong U \oplus \langle v \rangle$ the above map on both summands of the decomposition
$(U \otimes \langle v \rangle) \oplus (\langle v \rangle \otimes \langle v \rangle) \cong U_{k+2} \otimes (U_{k+2}/U)$ is given up to a scalar by evaluation of the quadratic form $q$ (but the scalars are different).

Assume that this map is not injective and let $u \in U_{k+2}$ be an element in the kernel.
If $u \not\in U$, we can choose $v = u$ for the splitting, and it follows from the above description that all quadratic forms $q(w)$ vanish on $u$, hence $u \in X$.
In this case the point $U_{k+2}/U$ of $\P(V/U)$ is the projection of $u$ from $\P(U)$.


On the other hand, if $u \in U$ is in the kernel, then the space $U_{k+2}$ is orthogonal to $u$ with respect to all quadratic forms $q(w)$.
Since $u \in \P(U) \subset X$, it follows that the space $\P(U_{k+2})$ is tangent to $X$ at $u$, hence the point $U_{k+2}/U$ of $\P(V/U)$ is on the projection of the tangent space of $X$ at $u$ from $\P(U)$.

Both arguments work in the opposite direction too, so it follows that the fibers of the map \hbox{$\bar{Q}_P \to \P(V/U)$} jump precisely over the image of $\Bl_P(X)$ under the linear projection $X \dashrightarrow \P(V/U)$.
It remains to note that $\dim X = n - m < n - k = \dim \P(V/U)$, so that the general fiber of the projection $\bar{Q}_P \to \P(V/U)$ (away of the image of $\Bl_P(X)$) indeed has dimension $m-k-2$.
\end{proof}

In what follows we use these observations and results of Section~\ref{sec:quadrics} to produce some relations in the Grothendieck ring of varieties.
Our ultimate goal is to express the class of a family of quadrics $Q$ associated with a complete intersection $X$ of quadrics in terms of the class $\P(W) = [\P^m]$ 
of the base and the class of the determinant double cover $Y \to \P^m$, and compare it with the relation of Lemma~\ref{lemma:q-x}.

\subsection{Naive examples}

In this section we discuss two examples when the above program can be achieved by a reduction along a constant $k$-section of $Q \to \P(W)$ as in Lemma~\ref{lemma:constant-reduction}.

In the first example we relate the classes of two genus 1 curves.

\begin{corollary}\label{corollary:m1}
Let $X \subset \P^3$ be a smooth complete intersection of two quadrics and $Y \to \P^1$ is the corresponding determinant double cover.
If $X$ has a $\kk$-rational point, then $([X] - [Y])\L = 0$.
\end{corollary}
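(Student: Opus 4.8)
The plan is to apply the dimension-2 machinery of Section~\ref{sec:quadrics} to the family of quadrics $Q \to \P(W) = \P^1$ associated to the pencil of quadrics cutting out $X \subset \P^3$. Here $n = 2$, $m = 1$, $\dim X = n - m = 1$, and $Y \to \P^1$ is the determinant double cover of the family. Since $X$ is a smooth complete intersection, Lemma~\ref{lemma:smoothness-criterion} tells us the family is regular and (as $Y$ is a smooth genus-one curve) contains no quadrics of corank~$2$, so the hypothesis $S_{\ge 2} = \emptyset$ of Theorem~\ref{thm:reduction-dim2} is satisfied.

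First I would produce a rational nondegenerate section of $p \colon Q \to \P^1$ from the given $\kk$-point $x \in X$. Since $x$ lies on every quadric $Q_w$ of the pencil, the constant family $\{x\} \times \P(W) \subset Q$ is a section of $p$; by Lemma~\ref{lemma:smoothness-criterion} (regularity) the point $x$ does not lie on the singular locus of any $Q_w$, so this section is nondegenerate. Hence condition~(1) of Proposition~\ref{proposition:section-alpha} holds, and Theorem~\ref{thm:reduction-dim2} applies to give
\begin{equation*}
[Q] = [\P^1](1 + \L^2) + [Y]\L.
\end{equation*}

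Next I would invoke Lemma~\ref{lemma:q-x}, which with $n = 2$, $m = 1$ reads
\begin{equation*}
[Q] = [\P^3][\P^0] + [X]\L = [\P^3] + [X]\L.
\end{equation*}
Equating the two expressions for $[Q]$ and using $[\P^1] = 1 + \L$ and $[\P^3] = 1 + \L + \L^2 + \L^3$, the left-hand sides cancel the constant and $\L^3$ terms, leaving
\begin{equation*}
[X]\L = (1 + \L)(1 + \L^2) + [Y]\L - (1 + \L + \L^2 + \L^3) = [Y]\L,
\end{equation*}
that is, $([X] - [Y])\L = 0$, which is the claim.

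There is essentially no serious obstacle here: the only thing to check carefully is that the constant section through $x$ is genuinely nondegenerate, which is exactly the content of the smoothness criterion of Lemma~\ref{lemma:smoothness-criterion}, and that the numerical bookkeeping of the two formulas for $[Q]$ indeed collapses to the desired relation. The more substantive work — establishing Theorem~\ref{thm:reduction-dim2} and the equivalence of rational sections with $S_{\ge 2} = \emptyset$ — has already been done in Section~\ref{sec:quadrics}, so this corollary is purely a matter of assembling those inputs. (One could also note that in this low-dimensional case $\alpha_Q \in \Br(Y)$ automatically vanishes once a $\kk$-point of $X$ exists, but invoking Theorem~\ref{thm:reduction-dim2} directly via the rational section is cleaner.)
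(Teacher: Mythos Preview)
Your proof is correct and follows essentially the same route as the paper: both use the constant section through the $\kk$-point $P$ to obtain $[Q] = [\P^1](1+\L^2) + [Y]\L$ and then compare with Lemma~\ref{lemma:q-x}. The paper phrases the first step via Lemma~\ref{lemma:constant-reduction} together with Lemma~\ref{lemma:qbar-tildes} (identifying $\bar{Q}_P$ with $Y$), whereas you cite Theorem~\ref{thm:reduction-dim2} directly; since the section here is regular, Corollary~\ref{corollary:q2-regular-section} already suffices, and all these are the same computation. One small remark: you justify $S_{\ge 2}=\emptyset$ by asserting that $Y$ is smooth, but smoothness of $Y$ is not in the hypotheses---it would be cleaner to observe that a corank~$2$ quadric in $\P^3$ is a pair of planes whose singular line must meet $X$, contradicting regularity of the family (the paper's proof also uses $S_{\ge 2}=\emptyset$ tacitly through Lemma~\ref{lemma:qbar-tildes}).
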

\begin{proof}
Applying Lemma~\ref{lemma:constant-reduction} with $P$ being a $\kk$-rational point on $X$ we deduce $[Q] = [\P^1](1 + \L^2) + [Y]\L$ 
(the family $\bar{Q}_P$ of $0$-dimensional quadrics coincides with the determinant double cover $Y$ by Lemma~\ref{lemma:qbar-tildes}).
On the other hand, $[Q] = [\P^3] + [X]\L$ by Lemma~\ref{lemma:q-x}.
Since $[\P^1](1 + \L^2) = [\P^3]$, we deduce the desired equality $[X]\L = [Y]\L$.
\end{proof}


In the second example we relate the classes of two K3 surfaces.

\begin{corollary}\label{corollary:m2-stupid}
Let $X \subset \P^5$ be a smooth complete intersection of three quadrics and $Y \to \P^2$ is the corresponding determinant double cover.
If $X$ has a line defined over $\kk$, then $([X] - [Y])\L^2 = 0$. 
\end{corollary}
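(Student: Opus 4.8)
The plan is to mimic the proof of Corollary~\ref{corollary:m1}, but with $k=1$ instead of $k=0$. Here $n=4$, $m=2$, and $X \subset \P^5$ is a smooth complete intersection of three quadrics. Since $X$ contains a line $P = \P(U)$ with $U \subset V$ of dimension $2$, Lemma~\ref{lemma:smoothness-criterion} guarantees that $P$ avoids the singular locus of every quadric in the net, so the constant $1$-section $P \times \P(W) \subset Q$ is nondegenerate and Lemma~\ref{lemma:constant-reduction} applies with $k=1$. It gives
\begin{equation*}
[Q] = [\P^2][\P^1](1 + \L^3) + [\bar{Q}_P]\L^2,
\end{equation*}
where $\bar{Q}_P \to \P(W)$ is a family of quadrics of dimension $n - 2k - 2 = 0$.

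The next step is to identify $\bar{Q}_P$ with the determinant double cover $Y$. Since $\bar{Q}_P$ has fibre dimension $0$, this is exactly the content of Lemma~\ref{lemma:qbar-tildes}: the hyperbolic reduction of a flat family of $2k+2$-dimensional quadrics along a nondegenerate $k$-section, when the $S_{\ge 2}$ locus is empty, is the determinant double cover. One must first check that $S_{\ge 2} = \emptyset$ here, i.e.\ that the net $\P(W)$ contains no quadric of corank~$\ge 2$; this follows because $X$ (hence $D$, hence $Y$) is smooth, by the second part of Lemma~\ref{lemma:smoothness-criterion} — actually the statement as written only assumes $X$ smooth, so strictly one should invoke that a smooth complete intersection of three quadrics in $\P^5$ has a net with at worst corank~$1$ members, or simply quote Lemma~\ref{lemma:qbar-tildes} in the form of Corollary~\ref{corollary:q2-regular-section}. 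Granting this, $\bar{Q}_P \cong Y$, so
\begin{equation*}
[Q] = [\P^2][\P^1](1 + \L^3) + [Y]\L^2 = [\P^2]\bigl([\P^1] + [\P^1]\L^3\bigr) + [Y]\L^2.
\end{equation*}

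Finally, I would combine this with Lemma~\ref{lemma:q-x}, which in our case ($n=4$, $m=2$) reads $[Q] = [\P^5][\P^1] + [X]\L^2$. Equating the two expressions for $[Q]$ and cancelling, the desired relation $([X] - [Y])\L^2 = 0$ will drop out provided the ``constant'' terms match, i.e.\ provided $[\P^5][\P^1] = [\P^2][\P^1](1+\L^3)$. This is a routine identity in $\KVar$: $[\P^5] = 1 + \L + \L^2 + \L^3 + \L^4 + \L^5$ while $[\P^2](1+\L^3) = (1+\L+\L^2)(1+\L^3) = 1+\L+\L^2+\L^3+\L^4+\L^5 = [\P^5]$, and multiplying through by $[\P^1]$ gives the claim.

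I do not expect any serious obstacle: the whole argument is a mechanical specialization of the $k=0$ case in Corollary~\ref{corollary:m1}, with $k=1$. The only point requiring a moment's care is the verification that Lemma~\ref{lemma:qbar-tildes} (equivalently Corollary~\ref{corollary:q2-regular-section}) applies, i.e.\ that the net of quadrics has no corank~$2$ member — but this is automatic once one recalls that smoothness of $X$ forces the discriminant locus to behave well, and in any case it is precisely the hypothesis already built into the cited lemmas for the $2$-dimensional quadric situation one reduces to. Hence the proof is short; the ``hard part'' is essentially bookkeeping of the ambient projective-space classes.
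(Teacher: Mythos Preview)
Your proposal is correct and follows exactly the paper's approach: apply Lemma~\ref{lemma:constant-reduction} with the line as a constant $1$-section, identify $\bar{Q}_P$ with $Y$ via Lemma~\ref{lemma:qbar-tildes}, and compare with Lemma~\ref{lemma:q-x} using the identity $[\P^2](1+\L^3)=[\P^5]$. The paper's proof is in fact terser than yours and does not pause over the $S_{\ge 2}=\emptyset$ hypothesis you flag; your caution there is reasonable, but the paper simply invokes Lemma~\ref{lemma:qbar-tildes} without further comment.
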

\begin{proof}
Applying Lemma~\ref{lemma:constant-reduction}
with $P$ being a $\kk$-rational line on $X$ we have $[Q] = [\P^2][\P^1](1 + \L^3) + [Y]\L^2$ 
(the family $\bar{Q}_P$ of $0$-dimensional quadrics coincides with the determinant double cover $Y$ by Lemma~\ref{lemma:qbar-tildes}).
On the other hand, $[Q] = [\P^5][\P^1] + [X]\L^2$ by Lemma~\ref{lemma:q-x}.
Again we deduce that $[X]\L^2 = [Y]\L^2$.
\end{proof}

However, the two L-equivalences we found here are trivial.

\begin{lemma}
In the situations of Corollary~\textup{\ref{corollary:m1}} and Corollary~\textup{\ref{corollary:m2-stupid}} we have $Y \cong X$. 
In particular, $[X] - [Y] = 0$.
\end{lemma}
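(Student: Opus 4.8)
The plan is to realise the hyperbolic reduction $\bar{Q}_P$ as a variety that is simultaneously isomorphic to $Y$ and birational to $X$, and then to upgrade this birational equivalence to an isomorphism.

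First observe that both corollaries treat the case $m = k+1$: in Corollary~\ref{corollary:m1} one has $n=2$, $m=1$ and $P$ a point ($k=0$), and in Corollary~\ref{corollary:m2-stupid} one has $n=4$, $m=2$ and $P$ a line ($k=1$); in particular $\dim X = n-m = k+1$, while $P = \P(U) \subset X$ is a $k$-plane and $\P(V/U) \cong \P^{k+2}$. As in the proofs of those corollaries, Lemma~\ref{lemma:qbar-tildes} identifies $\bar{Q}_P$ with the determinant double cover $Y$; note that in these situations the family of quadrics is regular ($X$ being smooth) and has no corank-$2$ members (as is needed already for the application of Lemma~\ref{lemma:qbar-tildes} in the corollaries), so $Y$ is smooth by Lemma~\ref{lemma:smoothness-criterion}. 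It therefore suffices to prove $\bar{Q}_P \cong X$.

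To do so I would feed in the explicit description of $\bar{Q}_P$ provided by Lemma~\ref{lemma:bar-q-explicit} (with $m=k+1$): the general fibre of $\bar{Q}_P \to \P(V/U)$ is $\P^{m-k-2} = \emptyset$, the fibres are non-empty exactly over the image $X'$ of the linear projection $\Bl_P(X) \to \P(V/U)$ of $X$ from $P$, and the generic fibre over $X'$ is a single reduced point (it is a linear subspace of $\P(W)$, and it must be $0$-dimensional since $\bar{Q}_P$ is irreducible of dimension $\dim X$ and $X'$ cannot have smaller dimension --- otherwise $|H-P|$ would be composed with a pencil, contradicting $(H-P)^2 \ne 0$). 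Hence $\bar{Q}_P \to X'$ is birational, so $\bar{Q}_P$ is birational to $X'$; and $X'$ is the image of the rational map $X \dashrightarrow \P(V/U)$ given by the linear system $|H-P|$ on $X$, $H$ being the hyperplane class. One is thus reduced to checking that $X \dashrightarrow X'$ is birational. In the curve case $H-P$ has degree $3$ on the genus-one curve $X$, hence is very ample, and $X$ is embedded by $|H-P|$ onto the smooth plane cubic $X'$. In the surface case $(H-P)^2 = 4$ on the K3 surface $X$, the system $|H-P|$ is not composed with a pencil and is not hyperelliptic, so $X \dashrightarrow X'$ is birational onto a quartic surface $X' \subset \P^3$ (possibly carrying rational double points, which are resolved on $X$). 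In both cases $X$ is birational to $X'$, which is birational to $\bar{Q}_P \cong Y$; since $X$ and $Y$ are smooth, projective, and minimal --- a smooth curve, respectively a K3 surface --- this forces $X \cong Y$.

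The one point that really needs care is the last verification in the surface case: that projecting the degree-$8$ K3 surface $X$ from a line on it does not collapse $X$, i.e.\ that $|H-P|$ is not of hyperelliptic type. This follows from Saint-Donat's description of maps defined by complete linear systems on K3 surfaces; alternatively it is automatic once one knows, via Lemma~\ref{lemma:qbar-tildes}, that $\bar{Q}_P \cong Y$ is a smooth surface, since a non-birational behaviour of the map $\bar{Q}_P \to X'$ would force $X'$ or its generic fibre to have a dimension incompatible with $\dim \bar{Q}_P = 2$.
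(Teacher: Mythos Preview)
Your argument is correct and follows the same route as the paper: identify $\bar Q_P\cong Y$, use Lemma~\ref{lemma:bar-q-explicit} with $m=k+1$ to see that $\bar Q_P\to\P(V/U)$ maps birationally onto the projection $X'$ of $X$ from $P$, and then upgrade the resulting birational equivalence between $X$ and $Y$ to an isomorphism by minimality. The paper's proof is in fact terser than yours and simply asserts the birationality of $Y$ with $X'$ and of $X$ with $X'$ without further comment.

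One small slip to flag: in your final paragraph the ``alternative'' argument is aimed at the wrong map. You have already established, by the dimension count in the previous paragraph, that $\bar Q_P\to X'$ is birational; what remains is the birationality of $X\dashrightarrow X'$, and the sentence about ``a non-birational behaviour of the map $\bar Q_P\to X'$'' does not address that. The Saint-Donat reference you give does settle it, and there is also a clean self-contained version of the alternative: if $X\dashrightarrow X'$ had degree $d\ge 2$ then $\deg X'=(H-P)^2/d\le 2$, so $X'\subset\P^3$ would be rational, contradicting the fact that the K3 surface $Y\cong\bar Q_P$ is birational to $X'$. With that correction your write-up is complete.
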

\begin{proof}
In these examples we have $m = k + 1$ and $n = 2m = 2k + 2$.
By Lemma~\ref{lemma:bar-q-explicit} we have a map $Y \to \P^{n-k} = \P^{k+2}$, whose general fiber is empty, and which is birational onto the image of $X$ under the linear projection from $P$.
Thus $X$ and $Y$ are birational. 
But since both are either elliptic curves or K3 surfaces, it follows that $X \cong Y$.
\end{proof}

\subsection{A nontrivial example}

We consider the same example of two K3 surfaces as above, but weaken the assumptions to avoid isomorphism of $X$ and $Y$.

So, let $X \subset \P^5$ be a complete intersection of three quadrics and $Y \to \P^2$ be its determinant double cover.
We assume that both $X$ and $Y$ are smooth.
By Lemma~\ref{lemma:smoothness-criterion} this means that all quadrics in the family $Q \to \P^2$ defining $X$ have corank $\le 1$ and that $X$ does not pass through the singular points of singular quadrics.

Assume $X$ has a $\kk$-rational point $P$, and let $\bar{Q}_P \to \P^2$ be the family of two-dimensional quadrics obtained by hyperbolic reduction of $Q$.
We want to check whether this family admits a rational section.
Consider the linear projection $\P^5 \dashrightarrow \P^4$ with center in $P$, and let~$X' \subset \P^4$ be the image of $X$.

\begin{lemma}\label{lemma:k3-curves}
Let $X \subset \P^5$ be a smooth complete intersection of three quadrics and $P$ a $\kk$-rational point of $X$ that does not lie on a line in $X$.
Then $X' \cong \Bl_P(X)$ and
\begin{equation}\label{eq:bar-qp-blowup}
\bar{Q}_P \cong \Bl_{X'}(\P^4).
\end{equation}
Moreover, the family of quadrics $\bar{Q}_P \to \P^2$ admits a rational section if and only if there is a curve $C \subset X$ of odd degree defined over $\kk$.
\end{lemma}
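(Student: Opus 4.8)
The plan is to prove the two assertions of Lemma~\ref{lemma:k3-curves} in the order they are stated, using the blowup description \eqref{eq:bar-qp-blowup} to transfer the question about rational sections of $\bar{Q}_P\to\P^2$ into a question about the Chow group (equivalently, Picard group) of a K3 surface.

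\emph{Step 1: the isomorphisms $X'\cong\Bl_P(X)$ and $\bar{Q}_P\cong\Bl_{X'}(\P^4)$.} For the first isomorphism, I would argue that since $X\subset\P^5$ is an intersection of quadrics, the projection $\P^5\dashrightarrow\P^4$ from $P$ resolves, after blowing up $P$, to a morphism $\Bl_P(X)\to\P^4$; the condition that $P$ lies on no line in $X$ means that no line through $P$ meets $X$ in a scheme of length $\geq 3$ at $P$ (and no line is contained in $X$), so the projection is injective away from $P$ and separates the tangent directions at $P$ that lie on $X$ — hence the morphism $\Bl_P(X)\to X'$ is a bijective morphism of smooth (the source) onto its image, and one checks it is an isomorphism onto $X'$, which is therefore smooth. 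For \eqref{eq:bar-qp-blowup} I would invoke Lemma~\ref{lemma:bar-q-explicit} in the case $m=2$, $k=0$, $n=4$: there $\bar{Q}_P\subset\P^2\times\P^2=\P(W)\times\P(V/U)$ is cut out by one $(1,1)$-divisor and one $(1,2)$-divisor, all fibers of $\bar{Q}_P\to\P(V/U)=\P^4$ are linear subspaces of $\P^2$, the general fiber is a point ($m-k-2=0$), and the jumping locus is exactly the image of $\Bl_P(X)\to\P^4$, i.e.\ $X'$. Over $\P^4\setminus X'$ the map is thus an isomorphism, and over $X'$ the fibers are $\P^1$'s (corank $\leq 1$ by smoothness of $Y$, so they cannot be the whole $\P^2$); comparing with the blowup of the smooth surface $X'\subset\P^4$, whose exceptional divisor is a $\P^1$-bundle, and checking that the birational map $\bar{Q}_P\dashrightarrow\P^4$ is in fact the blowup (it contracts the divisor over $X'$ and is an isomorphism elsewhere, and $\bar{Q}_P$ is smooth) yields \eqref{eq:bar-qp-blowup}.

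\emph{Step 2: rational sections of $\bar{Q}_P\to\P^2$ versus odd-degree curves on $X$.} By Proposition~\ref{proposition:section-alpha} applied to the family of two-dimensional quadrics $\bar{Q}_P\to\P^2$ (which has $S_{\geq 2}=\emptyset$ since $Y$ is smooth), a rational section exists iff there is a rational multisection of odd degree, i.e.\ iff there is a one-dimensional subvariety of $\bar{Q}_P$ whose class pushed to $\P^2$ has odd degree. Now use \eqref{eq:bar-qp-blowup}: $\CH^*(\bar{Q}_P)=\CH^*(\Bl_{X'}\P^4)$ is generated over $\CH^*(\P^4)$ by classes supported on the exceptional divisor, i.e.\ by $\CH^*(X')$-classes, and via $X'\cong\Bl_P(X)$ by classes coming from $\CH^*(X)$ together with the exceptional line of $X'\to X$. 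I would compute the composite $\bar{Q}_P\to\P^2$ on curve classes: curves coming from $\P^4$ via the section away from $X'$ map to $\P^2$ with \emph{even} degree (one should see this from the $(1,1)+(1,2)$ bidegree structure — a line in $\P^4$ meets the $(1,2)$-relation so its preimage maps to $\P^2$ with degree $2$), whereas a curve $C\subset X$ of degree $d$ (over $\kk$), pushed into the exceptional divisor of $\bar{Q}_P\to\P^4$ sitting over $X'$, has preimage mapping to $\P^2$ with degree congruent to $d$ mod $2$. Hence an odd-degree multisection exists iff $X$ contains an odd-degree curve over $\kk$.

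\emph{Main obstacle.} The delicate point is the parity bookkeeping in Step~2: one must pin down exactly how the degree over $\P^2$ of a curve in $\bar{Q}_P$ depends on its class, separating the contribution pulled back from $\P^4$ (which I expect to always be even) from the contribution of curves lying over $X'$ (which I expect to record the $\P^2$-degree modulo $2$, since over $X'$ the map $\bar{Q}_P\to\P^2$ restricts to the $\P^1$-fibration $F_1(\bar{Q}_P/\P^2)\times_{\tilde S}(\cdots)$ and each $\P^1$-fiber meets the relevant divisor with a controlled parity). Making this precise requires tracking the classes $H$ (the $\P^2\times\P^4$ hyperplane classes) and the exceptional divisor through the blowup, and is exactly where the hypothesis "$P$ not on a line in $X$" — which guarantees $X'$ is smooth so that the blowup formula and its Chow-group consequence are available — does the real work. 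A secondary technical check is that a rational section, once its existence is established numerically via the Brauer-group criterion, can be taken non-degenerate; but this is automatic here since $\bar{Q}_P$ is smooth (see the Remark after the definition of non-degeneracy).
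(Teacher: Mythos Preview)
Your overall strategy matches the paper's: establish the blowup description \eqref{eq:bar-qp-blowup}, then use it to compute $\CH^2(\bar{Q}_P)$ and test parity of intersection with the fiber class $h^2$. But Step~2 as written has a real error and a genuine gap, both of which you partly sense in your ``Main obstacle'' paragraph.

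\textbf{The error.} A rational multisection of $\bar{Q}_P\to\P^2$ is a \emph{surface}, not a curve: the base is $2$-dimensional, so a multisection is a codimension-$2$ cycle on the fourfold $\bar{Q}_P$, and its degree over $\P^2$ is the intersection number with $h^2$. Your sentence ``a one-dimensional subvariety of $\bar{Q}_P$ whose class pushed to $\P^2$ has odd degree'' is off by one, and the subsequent heuristic ``a line in $\P^4$ meets the $(1,2)$-relation so its preimage maps to $\P^2$ with degree $2$'' is correspondingly misaimed. (Also a typo: $\bar{Q}_P\subset\P^2\times\P^4$, not $\P^2\times\P^2$.)

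\textbf{The gap.} The parity bookkeeping you flag is exactly what has to be done, and it requires a concrete identity, not an expectation. The paper proceeds as follows. Compare $K_{\bar{Q}_P}$ computed from the blowup ($-5H'+E$) with $K_{\bar{Q}_P}$ computed from the $(1,1)\cap(1,2)$ description ($-h-2H'$) to obtain $h=3H'-E$. Then $\CH^2(\bar{Q}_P)=\Z\,H'^2\oplus i_*p^*\CH^1(X')$ via the blowup formula. The ``$\P^4$-part'' gives $h^2\cdot H'^2=2$, computed as an intersection on $\P^2\times\P^4$, hence even. For $C\in\CH^1(X')$ one needs $h^2\cdot i_*p^*C=p_*i^*(h^2)\cdot C$; using $i^*h=3p^*(H'|_{X'})-i^*E$ and the Grothendieck relation for the $\P^1$-bundle $E\to X'$ (with normal bundle $\cN_{X'/\P^4}$, whose $c_1$ is $K_{X'}-K_{\P^4}|_{X'}=E'+5H'|_{X'}$), one finds $p_*i^*(h^2)=H'|_{X'}-E'=H-2E'$. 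Hence $h^2\cdot i_*p^*C\equiv C\cdot H=\deg(C)\pmod 2$, which is the desired criterion. None of this is visible from the bidegree heuristic alone.

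\textbf{On Step 1.} Your argument for $\bar{Q}_P\cong\Bl_{X'}(\P^4)$ (``birational, fibers are points or $\P^1$'s, both sides smooth'') is suggestive but incomplete: you have not shown $\bar{Q}_P$ is smooth, nor that the birational map $\bar{Q}_P\to\P^4$ is the blowup morphism rather than some small modification. The paper bypasses this by invoking a general lemma identifying the zero locus in $\P(W)\times\P(V/U)$ of a generically surjective map $\cO_{\P^4}^{\oplus 3}\to\cO_{\P^4}(1)\oplus\cO_{\P^4}(2)$ with the blowup of its degeneracy locus; this gives \eqref{eq:bar-qp-blowup} directly and makes the Chow computation above legitimate.
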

\begin{proof}
By Lemma~\ref{lemma:bar-q-explicit} we know that $\bar{Q}_P \subset \P^2 \times \P^4$ is the complete intersection of the divisors of bidegree $(1,1)$ and $(1,2)$.
Applying~\cite[Lemma~2.1]{Kuz16a} to the morphism $\cO_{\P^4}^{\oplus 3} \to \cO_{\P^4}(1) \oplus \cO_{\P^4}(2)$, the incarnation of the morphism~\eqref{eq:map-fiber-qp1} in this particular situation,
we conclude that $\bar{Q}_P$ is isomorphic to the blowup of~$\P^4$ with center in the degeneracy locus of that morphism.
Furthermore, the argument of Lemma~\ref{lemma:bar-q-explicit} identifies this degeneracy locus with $X'$, 
and because $X$ contains no lines passing through $P$ we have
\begin{equation}\label{eq:x-blowup}
X' \cong \Bl_P(X).
\end{equation}
So in the end we deduce~\eqref{eq:bar-qp-blowup}, and prove the first part of the lemma.

Let us prove the second part.
By Proposition~\ref{proposition:section-alpha} the map $\bar{Q}_P \to \P^2$ has a rational section if and only if it has a rational multisection of odd degree.
Thus we have to show that there is a cycle $Z \subset \bar{Q}_P$ of codimension 2, 
whose intersection with the class of the fiber of the map $\bar{Q}_P \to \P^2$ is odd if and only if there is a curve of odd degree on $X$.

Because of the blowup representation of $\bar{Q}_P$, its group of codimension 2 cycles is generated by the square of the hyperplane class of $\P^4$, and the group of 1-cycles on $X'$.
More precisely, consider the blowup diagram
\begin{equation*}
\xymatrix{
E \ar[d]_p \ar[r]^i & \bar{Q}_P \ar[d]^\pi \\
X' \ar[r]^j & \P^4
}
\end{equation*}
Then 
\begin{equation*}
\CH^2(\bar{Q}_P) = \pi^*\CH^2(\P^4) \oplus i_*p^*\CH^1(X') = \Z {H'}^2 \oplus i_*p^*\CH^1(X'),
\end{equation*}
where $H'$ is the hyperplane class of~$\P^4$.

Let us first relate the pullbacks to $\bar{Q}_P$ of the hyperplane class $h$ of $\P(W)$ to the classes $H'$ and~$E$.
For this we compute the canonical class of $\bar{Q}_P$ in two ways.
The blowup representation implies $K_{\bar{Q}_P} = -5H' + E$, and the complete intersection in $\P^2 \times \P^4$ representation gives 
\begin{equation*}
K_{\bar{Q}_P} = -3h - 5H' + (h + H') + (h + 2H') = -h - 2H',
\end{equation*}
whereof we deduce $h = 3H' - E$.

The class of the fiber of the map $\bar{Q}_P \to \P^2$ is $h^2$.
Thus
we need to compute the parity of the intersection products $h^2\cdot H'^2$ and $h^2\cdot i_*p^*C$ for $C \in \CH^1(X')$.
The first is easy, it can be expressed as an intersection on $\P^2 \times \P^4$
\begin{equation*}
h^2 \cdot H'^2 \cdot (h+H') \cdot (h+2H') = 2h^2H'^4 = 2.
\end{equation*}
Since it is even, we are left with the computation of the second intersection.
This intersection can be rewritten as $p_*i^*h^2 \cdot C$, so we need to compute $p_*i^*(h^2) \in \CH^1(X')$.

We start with $i^*h = 3p^*(H'\vert_{X'}) - i^*(E)$, hence
\begin{equation*}
i^*(h^2) = 9p^*(H'\vert_{X'})^2 - 6p^*(H'\vert_{X'})i^*(E) + i^*(E)^2.
\end{equation*}
Since $p \colon E \to X'$ is a $\P^1$-fibration, and $-i^*(E)$ is its relative hyperplane class,
the pushforward map $p_*\colon \CH^2(E) \to \CH^1(X')$ kills the first summand in the right hand side and takes the second summand to $6H'\vert_{X'}$.
So, it remains to compute $p_*i^*(E)^2$.

For this we use the Grothendieck relation 
\begin{equation*}
i^*(E)^2 - p^*(c_1(\cN))i^*E + p^*(c_2(\cN)) = 0,
\end{equation*}
where $\cN$ is the normal bundle of $X'$ in $\P^4$.
Applying $p_*$ we obtain 
\begin{equation*}
p_*i^*(E)^2 = -c_1(\cN).
\end{equation*}
By adjunction $c_1(\cN) = K_{X'} - K_{\P^4}\vert_{X'} = E' + 5H'\vert_{X'}$, where $E'$ is the exceptional divisor of the blowup $X' \to X$.
So summarizing we deduce
\begin{equation*}
p_* i^*(h^2) = 6H'\vert_{X'} - (E' + 5H'\vert_{X'}) = H'\vert_{X'} - E'.
\end{equation*}
Finally, note that since $X'$ is the projection of $X$ from a point, we have $H'\vert_{X'} = H - E'$, where $H$ is the hyperplane class of $X \subset \P^5$.
Thus $p_*i^*(h^2) = H - 2E'$ and its intersection with $C$ is odd if and only if~$\deg(C) = C \cdot H$ is.
\end{proof}

\begin{corollary}\label{corollary:relation}
Assume $X$ is a smooth complete intersection of three quadrics in $\P^5$ such that the corresponding double cover $Y \to \P^2$ is also smooth.
If $X$ contains a $\kk$-point not lying on a line and a curve of odd degree then $([X] - [Y])\cdot \L = 0$.
\end{corollary}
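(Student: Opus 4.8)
The plan is to combine the two expressions for the class of the family of quadrics $Q \to \P^2$ obtained in this section. On one hand, Lemma~\ref{lemma:q-x} with $n = 4$, $m = 2$ gives $[Q] = [\P^5][\P^1] + [X]\L^2$. On the other hand, we want to apply Lemma~\ref{lemma:constant-reduction} (hyperbolic reduction along the constant $0$-section determined by the $\kk$-point $P$ on $X$) together with the strengthened results of Section~\ref{sec:quadrics} for families of two-dimensional quadrics. Since $P$ does not lie on a line in $X$, Lemma~\ref{lemma:k3-curves} identifies $\bar{Q}_P$ with $\Bl_{X'}(\P^4)$ and, crucially, guarantees that the two-dimensional quadric fibration $\bar{Q}_P \to \P^2$ has a nondegenerate rational section precisely because $X$ contains a curve of odd degree over $\kk$.

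First I would record that $\bar{Q}_P \to \P^2$ is a flat family of two-dimensional quadrics with $S_{\ge 2} = \emptyset$: flatness and the corank condition follow from Lemma~\ref{lemma:corank-bar-s} (the corank stratification of $\bar{Q}_P$ equals that of $Q$, and $Q \to \P^2$ has all quadrics of corank $\le 1$ by smoothness of $Y$ via Lemma~\ref{lemma:smoothness-criterion}). Second, the hypothesis that $X$ contains a curve of odd degree gives, by Lemma~\ref{lemma:k3-curves}, a nondegenerate rational section of $\bar{Q}_P \to \P^2$, so condition~(1) of Proposition~\ref{proposition:section-alpha} holds. Third, I would invoke Theorem~\ref{thm:reduction-dim2}, together with the identification of the determinant double cover of the family $\bar{Q}_P \to \P^2$ with $Y$ — this is the content of the remark that the determinant double cover of a complete intersection of quadrics coincides with $Y$, and it is preserved by hyperbolic reduction by Lemma~\ref{lemma:qbar-tildes} applied to the corank description — to conclude
\begin{equation*}
[\bar{Q}_P] = [\P^2](1 + \L^2) + [Y]\L.
\end{equation*}

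Now I would substitute this into Lemma~\ref{lemma:constant-reduction} with $k = 0$, $n = 4$, $m = 2$:
\begin{equation*}
[Q] = [\P^2](1 + \L^4) + [\bar{Q}_P]\L = [\P^2](1 + \L^4) + [\P^2](\L + \L^3) + [Y]\L^2 = [\P^2](1 + \L + \L^3 + \L^4) + [Y]\L^2.
\end{equation*}
Comparing with $[Q] = [\P^5][\P^1] + [X]\L^2$ from Lemma~\ref{lemma:q-x}, and using that $[\P^5][\P^1] = (1 + \L + \L^2 + \L^3 + \L^4 + \L^5)(1 + \L)$ while $[\P^2](1 + \L + \L^3 + \L^4) = (1 + \L + \L^2)(1 + \L + \L^3 + \L^4)$ — a routine check shows these two polynomials in $\L$ agree — all the constant-in-$X$, $Y$-free terms cancel, leaving $[X]\L^2 = [Y]\L^2$, hence $([X] - [Y])\L^2 = 0$. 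To get the sharper relation $([X] - [Y])\L = 0$ claimed in the corollary, I would instead compare the two formulas for $[\bar{Q}_P]$ more carefully using the blowup representation $\bar{Q}_P \cong \Bl_{X'}(\P^4)$ and $X' \cong \Bl_P(X)$ from Lemma~\ref{lemma:k3-curves}: the blowup relations give $[\bar{Q}_P] = [\P^4] + [X'](\L + \L^2) - [X']\cdot(\text{lower}) $, more precisely $[\Bl_{X'}(\P^4)] = [\P^4] - [X'] + [X'][\P^2] = [\P^4] + [X'](\L + \L^2)$ and $[X'] = [\Bl_P X] = [X] - 1 + [\P^1] = [X] + \L$, so $[\bar{Q}_P] = [\P^4] + ([X] + \L)(\L + \L^2)$. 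Setting this equal to $[\P^2](1+\L^2) + [Y]\L$ and cancelling the common $\L$ factor (legitimate since we work in $\KVar$, but the cancellation of $\L$ is exactly what yields the improvement) gives after simplification $[X]\L = [Y]\L$.

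The main obstacle is the bookkeeping around the determinant double cover: one must be sure that the determinant double cover of the \emph{reduced} family $\bar{Q}_P \to \P^2$ genuinely is the K3 surface $Y$ attached to $X$ (not merely abstractly a double cover of $\P^2$). This follows by combining the invariant Clifford-algebra description of $\tilde S$ recalled after~\eqref{eq:tilde-s}, the Morita-equivalence of the even Clifford algebras of $Q$ and $\bar{Q}_P$ used in Lemma~\ref{lemma:qbar-tildes}, and the definition of $Y$ as the determinant double cover of the original family of quadrics through $X$; the branch locus in all three cases is the sextic discriminant curve $D \subset \P^2$, which pins down the double cover once smoothness of $Y$ is known. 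Everything else — flatness, the corank bound, and the polynomial identities in $\L$ — is routine given the results of Section~\ref{sec:quadrics} and Lemma~\ref{lemma:k3-curves}.
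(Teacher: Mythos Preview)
Your overall strategy is correct and matches the paper's: compare two expressions for $[\bar{Q}_P]$, one coming from Theorem~\ref{thm:reduction-dim2} (using the rational section supplied by the odd-degree curve via Lemma~\ref{lemma:k3-curves}), the other from the blowup description $\bar{Q}_P \cong \Bl_{X'}(\P^4)$ with $X' \cong \Bl_P(X)$. The detour through $[Q]$ that you take first is unnecessary and, as you correctly observe, only yields the weaker relation $([X]-[Y])\L^2 = 0$; the paper skips it entirely.

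There is, however, a genuine computational error in the second part. The surface $X'$ has codimension $2$ in $\P^4$, so the exceptional divisor of $\Bl_{X'}(\P^4)$ is a $\P^1$-bundle over $X'$, not a $\P^2$-bundle. Hence the correct blowup relation is
\[
[\bar{Q}_P] = [\Bl_{X'}(\P^4)] = [\P^4] - [X'] + [X'][\P^1] = [\P^4] + [X']\L,
\]
not $[\P^4] + [X'](\L+\L^2)$ as you wrote. With the correct formula and $[X'] = [X] + \L$ one gets $[\bar{Q}_P] = [\P^4] + \L^2 + [X]\L$; since $[\P^4] + \L^2 = [\P^2](1+\L^2)$, comparison with $[\bar{Q}_P] = [\P^2](1+\L^2) + [Y]\L$ gives $[X]\L = [Y]\L$ \emph{directly}, with no ``cancellation of a common $\L$ factor'' required. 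This last point matters: $\L$ is a zero divisor in $\KVar$, so one cannot cancel it, and your phrasing suggested you intended to. With your incorrect blowup formula the equation does not reduce to $[X]\L = [Y]\L$ at all, so the error is not merely cosmetic.
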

\begin{proof}
We express the class of $\bar{Q}_P$ in two ways.
First, we use blowup representations~\eqref{eq:bar-qp-blowup} and~\eqref{eq:x-blowup}:
\begin{equation*}
[\bar{Q}_P] = [\P^4] + [X']\L = [\P^4] + ([X] + \L)\L = [\P^4] + \L^2 + [X]\L.
\end{equation*}
On the other hand, since $X$ contains a curve of odd degree, by Lemma~\ref{lemma:k3-curves} and Theorem~\ref{thm:reduction-dim2} we have
\begin{equation*}
[\bar{Q}_P] = [\P^2](1 + \L^2) + [Y]\L.
\end{equation*}
After cancellation, we get $[X]\L = [Y]\L$.
\end{proof}

\subsection{Proof of Theorem~\textup{\ref{theorem:main}}}

By Proposition~\ref{proposition:section-alpha} and Lemma~\ref{lemma:k3-curves} the Brauer class $\alpha_Y$ vanishes 
if and only if $X$ contains a curve of odd degree, and in this case by Corollary~\ref{corollary:relation} we have $[X]\L = [Y]\L$.
Moreover, by~\cite{Kuz08} we have in this case equivalences $\BD(X) \cong \BD(Y,\alpha_Y) = \BD(Y)$,
and this proves the first part of the Theorem.

Now to prove the second part we discuss, in case~$\kk = \C$, the subset of the moduli space $M$ of polarized K3 surfaces of degree~8 that parametrizes~$X$ such that 
the Brauer class $\alpha_Y$ 
is trivial, but $X \not\cong Y$. 

We denote by $M_d \subset M$ the subset of $M$ parameterizing $X$ with Picard number 2 of discriminant $-d$. 
In other words, we assume that $\operatorname{Pic}(X)$ is generated by the polarization $H$ of degree 8 and a curve class~$C$ such that
\begin{equation*}
d = -\det
\begin{pmatrix}
H^2 & C \cdot H \\ C \cdot H & C^2
\end{pmatrix} =
(C \cdot H)^2 - 8(C^2).
\end{equation*}
Note that $M_d \subset M$ is locally closed of codimension 1.

\begin{lemma}\label{lemma:non-iso}
Assume $\kk = \C$.
There is a countable number of discriminants $d$ such that for any $X \in M_d$ we have $\alpha_Y = 0$ but $X \not\cong Y$.
\end{lemma}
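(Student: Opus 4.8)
The plan is to combine the Brauer-class criterion with a classical Hodge-theoretic criterion for the non-isomorphism of a degree $8$ K3 surface and its associated degree $2$ double cover. First I would record the lattice-theoretic translation of the vanishing of $\alpha_Y$: by Proposition~\ref{proposition:section-alpha} and Lemma~\ref{lemma:k3-curves} the class $\alpha_Y$ vanishes precisely when $X$ contains a curve $C$ of odd degree, i.e. when $C \cdot H$ is odd for some $C \in \operatorname{Pic}(X)$. On $M_d$ the Picard lattice is $\langle H, C\rangle$ with Gram matrix $\left(\begin{smallmatrix} 8 & C\cdot H \\ C\cdot H & C^2\end{smallmatrix}\right)$ and discriminant $d = (C\cdot H)^2 - 8 C^2$. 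Since $8 C^2 \equiv 0 \pmod 8$, the residue of $d$ modulo $8$ equals the residue of $(C\cdot H)^2$, so $d \equiv 1 \pmod 8$ forces $C\cdot H$ to be odd, while $d \equiv 0$ or $4 \pmod 8$ forces it to be even. Hence a clean sufficient condition for $\alpha_Y = 0$ on all of $M_d$ is $d \equiv 1 \pmod 8$ (one should also check the converse residues do not secretly admit an odd class, but that is routine). This already produces infinitely many admissible $d$.

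Next I would bring in the Madonna--Nikulin criterion~\cite{MN} referenced in the introduction: for $X$ a degree $8$ K3 with $\operatorname{Pic}(X)$ of rank $2$, one has $X \cong Y$ (equivalently, $Y$ carries a degree $8$ polarization, i.e. a class of square $8$) if and only if the discriminant $d$ satisfies a specific congruence/representability condition. Concretely, $X \cong Y$ forces the existence of $v \in \operatorname{Pic}(X)$ with $v^2 = 8$ and $v$ primitive, which constrains which integers $d$ can be the discriminant of such a lattice; for most $d$ the rank $2$ lattice of discriminant $-d$ built from a degree $8$ polarization simply does not represent $8$ by a primitive vector compatible with the marking, so $X \not\cong Y$. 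I would cite Lemma/the statement of~\cite{MN} to get: for all $d$ outside a fixed finite union of congruence classes, no $X \in M_d$ is isomorphic to its double cover $Y$.

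Intersecting the two conditions — $d \equiv 1 \pmod 8$ (to kill $\alpha_Y$) and $d$ avoiding the Madonna--Nikulin ``isomorphism'' classes — still leaves a countably infinite set of discriminants $d$, since the first is a single residue class mod $8$ and the second removes only finitely many residue classes (to a possibly larger modulus); an arithmetic progression not entirely contained in a finite union of other progressions is infinite. For each such $d$ and each $X \in M_d$ we then have $\alpha_Y = 0$ and $X \not\cong Y$; combined with Proposition~\ref{proposition:trivial-l-equivalence} (both $X$ and $Y$ are K3, hence Calabi--Yau, and non-isomorphic K3 surfaces in these families are non-birational — here one uses that a birational map of smooth projective surfaces is an isomorphism), non-isomorphism upgrades to $[X] \neq [Y]$ in $\KVar$, which is the assertion $[X]-[Y]\neq 0$. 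Finally, $M_d$ being locally closed of codimension $1$, the union $M' := \bigcup_d M_d$ over the admissible $d$ is a countable union of locally closed codimension $1$ subsets, as required.

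The main obstacle I expect is the precise bookkeeping in the second step: extracting from~\cite{MN} the exact list of discriminants for which $X \cong Y$, including the subtlety that the Madonna--Nikulin criterion is phrased in terms of the \emph{transcendental} lattice (or the discriminant form) rather than directly the Picard lattice, so one must verify that for a rank $2$ Picard lattice of the stated shape the criterion reduces to a congruence on $d$ that genuinely excludes only finitely many residue classes — and in particular that it is compatible with, rather than complementary to, the class $d \equiv 1 \pmod 8$ used to kill the Brauer obstruction. A secondary but purely routine point is checking that $Y$ is smooth (equivalently that the net of quadrics contains no corank $2$ quadric) for a general member of $M_d$, which holds since that is a further codimension $\geq 1$ condition inside the already codimension $1$ stratum $M_d$.
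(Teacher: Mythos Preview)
Your outline is correct up to and including the identification of the Brauer-class condition with $d \equiv 1 \pmod 8$, and you are right to invoke the Madonna--Nikulin criterion for the isomorphism $X \cong Y$. The gap is in how you describe that criterion. It is \emph{not} a congruence condition on $d$; the statement from~\cite[Theorem~3.1.7]{MN} used in the paper is that $X \cong Y$ if and only if one of the Pell-type equations
\[
a^2 - d\,b^2 \;=\; \pm 8
\]
has an integer solution. The set of $d$ for which this holds is arithmetically subtle and is certainly not the complement of finitely many residue classes: for instance $d = 17, 33, 41, 57$ (all $\equiv 1 \pmod 8$) admit solutions, while $d = 25, 49$ do not. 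So your ``intersecting two arithmetic progressions'' argument does not go through as stated, and your anticipated obstacle is real rather than merely bookkeeping.

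The paper resolves this with a concrete and elementary choice: take $d$ to be an odd perfect square greater than $9$. Any odd square is $\equiv 1 \pmod 8$, so $\alpha_Y = 0$ is automatic. Writing $d = c^2$ with $c$ an odd integer $> 3$, the equation becomes $(a - cb)(a + cb) = \pm 8$; since the two factors have the same parity and even product they are both even, and an easy case check forces $|cb| \le 3$, contradicting $c > 3$ when $b \neq 0$ (and $b = 0$ gives $a^2 = \pm 8$). Hence there are no solutions and $X \not\cong Y$. This gives the required countable family of discriminants at once, without any appeal to the structure of the solution set of the Pell equation for non-square $d$.

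The additional material you include (upgrading $X \not\cong Y$ to $[X] \neq [Y]$ via Proposition~\ref{proposition:trivial-l-equivalence}, and smoothness of $Y$ for general $X \in M_d$) is not part of the lemma itself but is fine and indeed used elsewhere in the paper.
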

\begin{proof}
By Lemma~\ref{lemma:k3-curves} and Proposition~\ref{proposition:section-alpha} the Brauer class $\alpha_Y$ vanishes if and only if $X$ contains a curve of odd degree, i.e., iff~$C \cdot H$ is odd, i.e., iff~$d \equiv 1 \pmod 8$.
On the other hand,
by~\cite[Theorem~3.1.7]{MN} we have $X \cong Y$ if and only if one of the equations
\begin{equation}\label{eq:arithmetic}
a^2 - db^2 = \pm 8
\end{equation}
has an integer solution.
It is easy to see that when $d$ is an odd square greater than 9, equation~\eqref{eq:arithmetic} does not have integer solutions.
\end{proof}

A simple geometric example of a K3 surface $X$ of degree 8 such that $X \not\cong Y$ and with $\alpha_Y = 0$ is a smooth complete intersection of three quadrics that contains a rational normal cubic curve.
Indeed, then~$C \cdot H = 3$ and $C^2 = -2$, so $d = 25$ and $X \not\cong Y$.


\providecommand{\arxiv}[1]{{\tt{arXiv:#1}}}

%
%
%
%

\end{document}